\pgfplotsset{compat=1.14}
\definecolor{ffffff}{rgb}{1,1,1}
\definecolor{ffff00}{rgb}{1,1,0}
\newcommand{\nord}{\mbox{\scriptsize ${\circ\atop\circ}$}}
\theoremstyle{definition}
\newtheorem{theorem}{Theorem}[section]
\newtheorem{thm}[theorem]{Theorem}
\newtheorem{prop}[theorem]{Proposition}
\newtheorem{defn}[theorem]{Definition}
\newtheorem{lemma}[theorem]{Lemma}
\newtheorem{cor}[theorem]{Corollary}
\newtheorem{prop-def}{Proposition-Definition}[section]
\newtheorem{rema}[theorem]{Remark}
\newtheorem{nota}[theorem]{Notation}
\newcommand{\R}{{\mathbb R}}
\newcommand{\N}{{\mathbb N}}
\newcommand{\C}{{\mathbb C}}
\newcommand{\Z}{{\mathbb Z}}
\newcommand{\h}{{\mathfrak h}}
\newcommand{\Hom}{\textrm{Hom}}
\newcommand{\End}{\textrm{End}}
\newcommand{\one}{\mathbf{1}}
\renewcommand{\d}{\mathbf{d}}
\newcommand{\wt}{\mbox{\rm wt}\ }
\newcommand{\Res}{\text{Res}}
\newcommand\reallywidehat[1]{%
\savestack{\tmpbox}{\stretchto{%
  \scaleto{%
    \scalerel*[\widthof{\ensuremath{#1}}]{\kern-.6pt\bigwedge\kern-.6pt}%
    {\rule[-\textheight/2]{1ex}{\textheight}}%WIDTH-LIMITED BIG WEDGE
  }{\textheight}% 
}{0.5ex}}%
\stackon[1pt]{#1}{\tmpbox}%
}
\begin{document}

\setlength{\oddsidemargin}{0cm} \setlength{\evensidemargin}{0cm}
\baselineskip=18pt

\title[Fermionic construction of the $\frac \Z 2$-graded MOSVA and its $\Z_2$-twisted module, II ]{Fermionic construction of the $\frac \Z 2$-graded meromorphic open-string vertex algebra and its $\Z_2$-twisted module, II}
\author{Fei Qi}

\begin{abstract}
This paper continues with Part I. We define the module for a $\frac{\Z} 2$-graded meromorphic open-string vertex algebra that is twisted by an involution and show that the axioms are sufficient to guarantee the convergence of products and iterates of any number of vertex operators. A module twisted by the parity involution is called a canonically $\Z_2$-twisted module. As an example, we give a fermionic construction of the canonically $\Z_2$-twisted module for the $\frac{\Z} 2$-graded meromorphic open-string vertex algebra constructed in Part I. Similar to the situation in Part I, the example is also built on a universal $\Z$-graded non-anti-commutative Fock space where a creation operator and an annihilation operator satisfy the fermionic anti-commutativity relation, while no relations exist among the creation operators or among the zero modes. The Wick's theorem still holds, though the actual vertex operator needs to be corrected from the na\"ive definition by normal ordering using the $\exp(\Delta(x))$-operator in Part I. 
\end{abstract}

\maketitle

\section{Introduction}

This paper is the continuation of \cite{FQ}. Recall that in \cite{FQ}, we defined the $\frac \Z 2$-graded meromorphic open-string vertex algebra ($\frac \Z 2$-graded MOSVA hereafter) and found that it is an appropriate noncommutative generalization of a vertex operator superalgebra. We also constructed an example $V$ that can be viewed as a noncommutative generalization of the vertex operator superalgebra constructed in \cite{T} and \cite{FFR}. 

In this paper, we define the module $(W, Y_W^\theta)$ for the $\frac \Z 2$-graded MOSVA $V$ that is twisted by an involution $\theta: V \to V$ and study some general properties. When $\theta$ is the parity involution of $V$, we omit the upper script $\theta$ and call $(W, Y_W)$ the canonically $\Z_2$-twisted $V$-module. As an example, we construct the canonically $\Z_2$-twisted $V$-module $W$ for the example of $V$ constructed in \cite{FQ}. The example $W$ can be viewed as a noncommutative generalization of the canonically $\Z_2$-twisted module discussed in \cite{FFR}. In terms of the notations of \cite{FFR}, $V$ is the noncommutative generalization of CM$(\Z+1/2)$, while $W$ is the noncommutative generalization of CM$(\Z)$. 

The motivation for constructing such a twisted module comes from Huang's insight in \cite{H-MOSVA} and \cite{H-MOSVA-Riemann}, as well as the experience learned from \cite{Q-2d-space-form}. $\Z_2$-twisted modules first appeared in \cite{FLM} and played an important role in the construction of the Monstrous Moonshine. The current context is motivated by the study of the quantum two-dimensional nonlinear $\sigma$-model with a nonflat target manifold. It is Huang's idea to first use geometric information to construct meromorphic vertex operators satisfying associativity but not commutativity, then study the modules generated by eigenfunctions over a manifold, and finally study the intertwining operators among these modules. In physics, eigenfunctions correspond to the quantum states of a particle that is a degenerated form of a string. Elements of the MOSVA module generated by an eigenfunction (eigenfunction module hereafter) can be viewed as suitable string-theoretic excitations of the quantum states. 

As the first step, Huang gave a Bosonic construction of MOSVAs and modules in \cite{H-MOSVA} and \cite{H-MOSVA-Riemann}. The author studied the example of such Bosonic MOSVAs and the eigenfunction modules over every two-dimensional space form in \cite{Q-2d-space-form}, i.e., two-dimensional Riemannian manifold with constant sectional curvature. One lesson learned from \cite{Q-2d-space-form} is that the MOSVA itself does not carry much geometric information. Indeed, the MOSVA itself can only distinguish space forms with positive and negative curvatures. But the eigenfunction modules carry enough information to distinguish space forms of different curvatures. For the current work, \cite{FQ} gave a fermionic construction of $\frac{Z} 2$-graded MOSVA $V$ that is parallel to the Bosonic MOSVA in \cite{H-MOSVA}. This paper continues to give a fermionic construction of the canonically $\Z_2$-twisted $V$-module $W$ parallel to the Bosonic construction of the left module in \cite{H-MOSVA}. We expected certain submodules constructed over a manifold to carry geometric and physical information, as in \cite{H-MOSVA-Riemann}. 

We now give a brief description of the construction of $W$. Similar to \cite{FQ}, we build the structure of $W$ over a non-anti-commutative integer-graded Fock space where the creation operators satisfy no relations. The correlation function of the modes is no longer a rational function but an algebraic function involving half-integral powers. We still keep the anti-commutativity relation between creation and annihilation operators that would allow us to define a normal ordering. We similarly prove Wick's theorem, expressing the product of two normal-ordered products in terms of other normal-ordered products. The normal-ordered products lead to a na\"ive definition of vertex operator $\bar{Y}_W$. It has to be corrected by the $\exp(\Delta(x))$-operator discussed in Section 5 of \cite{FQ} to get the actual vertex operator $Y_W$. 

One of the main difficulties is the presence of the zero modes. The zero modes do not satisfy the anti-commutativity relation yet still admit a normal ordering defined by contractions. Contrary to the situation in \cite{FQ}, there does not exist a recurrence relation in terms of the generating functions of the positive and negative modes. To handle the computations, one needs to use the combinatorics of 2-shuffles and 3-shuffles extensively. Another difficulty is that there is no reasonable definition of a $D$ operator that plays the role of $L(-1)$ on the current example $W$, unless the zero modes satisfy additional relations. 

It should be emphasized that the non-anti-commutativity of zero modes is crucial. In the Bosonic construction of eigenfunction modules studied in \cite{Q-2d-space-form}, the zero modes act on an eigenfunction via the covariant derivatives. It was surprising for the author to find in \cite{Q-Cov} that over every space form, the covariant derivative of every parallel tensor acts as a scalar multiple on every eigenfunction. Therefore, the zero modes in \cite{Q-2d-space-form} satisfy a noncommutative covariant derivative condition. We expect a similar situation for the zero modes in the fermionic construction that the zero modes will satisfy a non-anticommutative relation imposed by geometry. 

The paper is organized as follows: Section 2 discusses the definition of a module for a $\frac \Z 2$-twisted module that is twisted by an involution. We prove the axioms are sufficient for the products and iterates of any number of vertex operators to converge. Section 3 starts the discussion of the fermionic construction. We define the non-anti-commutative Fock space $W$, the modes, their generating functions, and the normal ordering. Then we introduce the na\"ive vertex operator using the normal ordering. Section 4 studies the product and iterate formulas of the na\"ive vertex operators. A lemma regarding the summation over 2-shuffles and 3-shuffles will be extensively used. We also prove Wick's theorem. From the formula, it is clear that the product and the iterate of na\"ive vertex operators do not coincide. Section 5 then uses the $\exp(\Delta(x))$-operator on $V$ to correct the na\"ive vertex operator, resulting in the actual vertex operator. Then we check all the axioms and show that the products and the iterates agree. 

\noindent\textbf{Acknowledgements. }I would like to thank Yi-Zhi Huang for his long-term support and the discussions of many aspects of the current work. I would also like to thank Tommy Wu-Xing Cai for the discussion on 2-shuffles and 3-shuffles, Alex Feingold for the discussion regarding the definition of $\Delta(x)$, Shashank Kanade for the discussion of the $D$-commutator formula, and Robert McRae for the discussion on the region of convergence of the iterates.

% \textcolor{red}{Should we say something about the Gepner model? }

% \begin{itemize}
%     \item Geometric motivations. Comparison with Bosonic constructions. 
%     \item Physical considerations. 
% \end{itemize}

\section{Twisted modules for $\frac \Z 2$-graded MOSVA}

\begin{defn}\label{Def}
Let $(V, Y_V, \one)$ be a $\frac \Z 2$-graded meromorphic open-string vertex algebra. Let $\theta$ be an involution of $V$, i.e., $\theta: V\to V$ is a grading-preserving linear isomorphism satisfying
$$\theta^2 = 1_V, \theta(Y_V(u, x)v) = Y_V(\theta(u), x) \theta(v), u, v\in V. $$
A \textit{$\theta$-twisted $V$-module} is a $\mathbb{C}$-graded vector space 
$W=\coprod\limits_{n\in\C} W_{(n)}$ (graded by {\it weights}) equipped with a {\it twisted vertex operator map}
\begin{eqnarray*}
   Y_W^\theta:  V\otimes W &\to & W[[x^{1/2},x^{-1/2}]]\\
	v\otimes w &\mapsto& Y_W^\theta(v,x)w = \sum_{n\in \Z/2} v_n w x^{-n-1},
  \end{eqnarray*}
an operator $\d_{W}$ of weight $0$ and 
an operator $D_{W}$ of weight $1$, satisfying the 
following axioms:
\begin{enumerate}

\item Axioms for the grading: 
\begin{enumerate}
\item \textit{Lower bound condition}:  When $\text{Re}{(m)}$ is sufficiently negative,
$W_{(m)}=0$. 
\item  \textit{$\mathbf{d}$-grading condition}: for every $w\in W_{(m)}$, $\d_W w = m w$.
\item  \textit{$\mathbf{d}$-commutator formula}: For $u\in V$, 
$$[\mathbf{d}_{W}, Y_W^\theta(u,x)]= Y_W^\theta(\mathbf{d}_{V}u,x)+x\frac{d}{dx}Y_W^\theta(u,x).$$
\end{enumerate}

\item The \textit{identity property}:
$Y_W^\theta(\one,x)=1_{W}$.

\item The \textit{$D$-derivative property} and the  \textit{$D$-commutator formula}: 
For $u\in V$,
\begin{eqnarray*}
\frac{d}{dx}Y_W^\theta(u, x)
&=&Y_W^\theta(D_{V}u, x) 
\end{eqnarray*}

\item {\it Weak associativity with pole-order condition}: For every homogeneous $v_1$ and every $w\in W$, there exists $P\in \mathbb{N}$ such that for every homogeneous $v_2\in V$, 
\begin{align}
    & (x_0+x_2)^{P + |v_1|/2} x_2^{|v_2|/2} Y_W^\theta(v_1, x_0+x_2)Y_W^\theta(v_2, x_2)w \label{Weak-assoc-LHS}\\
    = \ & (x_0+x_2)^{P+|v_1|/2}x_2^{|v_2|/2} Y_W^\theta(Y_V(v_1, x_0)v_2, x_2)w \label{Weak-assoc-RHS}
\end{align} 
as series in $W((x_1, x_2))$. Here 
$$|v| = \left\{\begin{array}{ll} 1 & \text{ if }v\in \coprod_{n\in \Z+1/2}V_{(n)}\\
0 & \text{ if }v\in \coprod_{n\in \Z}V_{(n)}
\end{array}\right.$$ 
is the parity of a homogeneous element $v\in V$. 
\end{enumerate}  

%In this paper we will simply denote the MOSVA by $V$ when there is no confusion.
\end{defn}

\begin{rema}
    Similar to usual modules for a MOSVA, for every homogeneous $v\in V$, the $\d$-commutator formula implies every component $v_n$ of $Y_W^\theta(v, x)$ is a homogeneous map, with 
    $$\wt v_n = \wt v - n - 1. $$
    Thus, for every $v, w\in W$, the lowest power of $x$ appearing in $Y_W^\theta(v, x)w$ is bounded below by a constant depending only on $v$ and $w$. In case both $v$ and $w$ are homogeneous, the constant may be chosen as $N - \wt v - \wt w$, where $N$ is a number satisfying $W_{(m)}=0, \text{Re}(m) < N$.
\end{rema}

\begin{rema}
Recall that for a $\frac \Z 2$-graded MOSVA, 
$$V_0 = \coprod\limits_{n\in \Z} V_{(n)}, V_1 = \coprod\limits_{n\in \Z+\frac 1 2} V_{(n)}$$
are respectively called the \textit{even} part and the \textit{odd} part. We may call $|v|$ the \textit{parity} of a homogeneous $v\in V$. Clearly, the map
$$\theta: V \to V, \theta(v)=(-1)^i v, v\in V_i, i=1, 2$$
is an involution of the MOSVA. The twisted module associated with this involution will be called the canonically $\Z_2$-twisted $V$-module. We will omit the $\theta$ in the notation of the twisted vertex operator and simply use $Y_W$. 
\end{rema}

\begin{rema}
We do not require the existence of an operator $D_W$ to satisfy the $D_W$-commutator formula 
$$[D_W, Y_W^\theta(v, x)] = \frac{d}{dx}Y_W^\theta(v, x). $$
Indeed, in the example that will be constructed in this paper, we will show that such operators do not exist unless the zero modes satisfy more relations. See Remark \ref{D-comm-fail}. 
\end{rema}

\subsection{Complex variable formulation} 
For $z\in \C^\times$, we define
$$l_p(z) = \log|z| + i(\arg z + 2\pi p), \arg z \in [0, 2\pi)$$
as the $p$-th branch of the multi-valued logarithmic function $\text{Log } z$. Then the square root function is the collection of the single-valued functions
$$(z^{1/2})_p = e^{\frac 1 2 l_p(z)} = (-1)^p |z|^{1/2} e^{\frac 1 2 i \arg z}$$
for $p\in \Z$ over $\C$ that are discontinuous on $\R_{\geq 0}$. In other words, 
$$(z^{1/2})_p = e^{\frac{1}{2} l_p(z)} = \left\{\begin{array}{ll}
    |z|^{1/2}e^{i\frac 1 2\arg z} & \text{ if }p\text{ is even,} \\
    -|z|^{1/2}e^{i\frac 1 2\arg z} & \text{ if }p\text{ is odd.}
\end{array}\right.$$

Let 
$$W' = \coprod_{n\in \C}W_{(n)}^*$$
be the restricted dual of $W$. Let 
$$\overline{W} = (W')^* = \prod_{n\in \C} W_{(n)}$$
be the algebraic completion. We understand twisted vertex operator $Y_W^\theta$ as a multi-valued function
$$Y_W^\theta: \C^\times \to \Hom_{\C}(V\otimes W, \overline{W})$$
and use the notation 
$$(Y_W^\theta)^p(v, z) = \sum_{n\in \Z}(Y_W^\theta)_n(v) (z^{-n-1})_p = \sum_{n\in \Z}(Y_W^\theta)_n(v) e^{(-n-1)l_p(z)}$$
for the $p$-th branch of $Y_W^\theta$. 

\begin{prop}
    With the presence of all other axioms, the weak associativity with pole order condition is equivalent to the following associativity axiom: For every homogeneous $v_1, v_2\in V$, every $w\in W, w'\in W' = \coprod\limits_{n\in \C}W_{(n)}^*$, there exists an algebraic function 
    $$f(z_1, z_2) = z_1^{|v_1|/2}z_2^{|v_2|/2}\frac{g(z_1, z_2)}{z_1^{q_1}z_2^{q_2}(z_1-z_2)^{q_{12}}}$$
    where $g(z_1, z_2)$ is a polynomial function, $q_1, q_2, q_{12}\in \N$ that depends respectively only on the pairs $(v_1, w), (v_2, w), (v_1, v_2)$, such that both 
    \begin{align}
        \langle w', (Y_W^\theta)^p(v_1, z_1)(Y_W^\theta)^p(v_2, z_2) w\rangle\label{Product-2-Y_W}  
    \end{align}
    and 
    \begin{align}
        \langle w', (Y_W^\theta)^p(Y_V(v_1, z_1-z_2)v_2, z_2)w\rangle \label{Iterate-2-Y_W}
    \end{align}
    converge absolutely respectively in the regions 
    $$|z_1|>|z_2|>0$$ and 
    $$|z_2|>|z_1-z_2|>0, |\arg z_1 - \arg z_2|<\pi/2$$ to the $p$-th branch 
    \begin{align}
    f^{p,p}(z_1, z_2)=(z_1^{|v_1|/2})_p(z_2^{|v_2|/2})_p \frac{g(z_1,z_2)}{z_1^{q_1}z_2^{q_2}(z_1-z_2)^{q_{12}}}. \label{Correlation-2-Y_W}
    \end{align}
    of the multivalued function $f(z_1, z_2)$. 
\end{prop}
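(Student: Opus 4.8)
The plan is to prove both implications by first trivializing the half-integral powers, then reducing the two-variable problem to a one-variable rationality statement: the grading forces homogeneity, and weak associativity controls the pole along $z_1=z_2$. Before anything else I would record the parity observation that for homogeneous $v$ the series $x^{|v|/2}Y_W^\theta(v,x)$ involves only integral powers of $x$ (in the canonical twisting an odd element is half-integrally moded, so multiplication by $x^{1/2}$ restores integral exponents). Thus the prefactors $x_2^{|v_2|/2}$ and $(x_0+x_2)^{|v_1|/2}$ in the axiom are precisely what turns the matrix coefficients into honest integral-power Laurent series, and all half-integral powers of $f$ are carried by the explicit monomial $z_1^{|v_1|/2}z_2^{|v_2|/2}$ while the remaining "core" is rational. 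It therefore suffices to prove the equivalence for the integral-power core and reinstate the monomials at the end.

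Taking $v_1,v_2,w,w'$ homogeneous (the general case follows by bilinearity, using a common denominator for $g$), I would use the $\mathbf{d}$-grading condition and the $\mathbf{d}$-commutator formula to show that both the formal product $\langle w',Y_W^\theta(v_1,x_1)Y_W^\theta(v_2,x_2)w\rangle$ and the formal iterate $\langle w',Y_W^\theta(Y_V(v_1,x_0)v_2,x_2)w\rangle$ are homogeneous of a single fixed total weight $d$. A homogeneous formal series in two variables is determined by a one-variable series: the product equals $x_2^{d}R(x_1/x_2)$ and the iterate equals $x_2^{d}\widetilde R(x_0/x_2)$, where $R$ is a series at $t=\infty$ (convergent for $|x_1|>|x_2|$) and $\widetilde R$ a series at $t=1$ (convergent for $|x_2|>|x_0|$). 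This collapses all pole-order bookkeeping into counting positive and negative powers of a single variable.

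For the substantive direction, weak associativity $\Rightarrow$ associativity, I would read off the three pole orders. The truncation $Y_W^\theta(v_2,x_2)w\in x_2^{-|v_2|/2}W((x_2))$ together with the lower-bound condition bounds the negative powers of $x_2$ and supplies $q_2$; the truncation $Y_V(v_1,x_0)v_2\in V((x_0))$ bounds the negative powers of $x_0=x_1-x_2$ and supplies the pole order $q_{12}$ of $R$ at $t=1$; and the integer $P$ in weak associativity bounds the pole order $q_1$ at $z_1=0$. With homogeneity, these finiteness statements force $R(t)$ to be the Laurent expansion at $t=\infty$ of a rational function whose only poles are at $t=0,1,\infty$, and reinstating the monomial prefactor gives exactly the stated algebraic form of $f$. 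Since the Laurent expansion of a rational function in a prescribed annular domain converges absolutely to that function there, the product converges to $f^{p,p}$ in $|z_1|>|z_2|$; weak associativity then identifies $\widetilde R$ as the expansion of the same rational function around $t=1$, so the iterate converges to the same branch $f^{p,p}$. The extra restriction $|\arg z_1-\arg z_2|<\pi/2$ is the geometric condition that $z_1=z_0+z_2$ with $|z_0|<|z_2|$ stays in the sector where the $p$-th branches of $z_1^{1/2}$ and $z_2^{1/2}$ are governed by a common $p$, which is what makes the diagonal branch $f^{p,p}$ the correct analytic continuation.

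For the converse I would run this backwards: convergence of the product and the iterate to the common branch $f^{p,p}$ shows that the two formal series are the Laurent expansions, in their respective domains, of one algebraic function; multiplying by $(x_0+x_2)^{P+|v_1|/2}x_2^{|v_2|/2}$ with $P\ge q_1$ removes the pole at $z_1=0$, which is exactly the obstruction to re-expanding the product (an expansion at $z_1=\infty$) around the locus $z_1=z_2$, so after clearing it the re-expansion in $x_0$ is legitimate and both sides reproduce the same Laurent series, with finitely many negative powers of $x_0$ coming from the bounded order $q_{12}$; this is the formal identity \eqref{Weak-assoc-LHS}$=$\eqref{Weak-assoc-RHS}. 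I expect the main obstacle to be this substantive direction: upgrading the purely formal statement "the series has finitely many poles of bounded order" to the analytic statement "it is the absolutely convergent expansion of a rational function," and then carrying out the branch-and-region bookkeeping carefully — the common index $p$, the sector $|\arg z_1-\arg z_2|<\pi/2$, and the verification that the monomial prefactors reproduce exactly $f^{p,p}$. The formal-to-analytic passage and the homogeneity reduction to one variable are the two pillars that keep this tractable.
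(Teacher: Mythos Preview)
Your plan is correct and follows essentially the same route as the paper: both arguments use the parity observation to reduce to integral powers, the $\mathbf{d}$-grading to force homogeneity of the paired matrix coefficient, the two truncation conditions to bound the poles in $x_2$ and $x_0$, and the integer $P$ from weak associativity to bound the pole at $z_1=0$; the branch discussion for the iterate is also the same, coming from the binomial expansion of $(z_2+(z_1-z_2))^{1/2}$.

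The one stylistic difference is your homogeneity reduction to a single variable $t=x_1/x_2$ (writing the matrix coefficient as $x_2^{d}R(t)$) versus the paper's more pedestrian approach of working directly with the common two-variable series $F(x_0,x_2)$ guaranteed by weak associativity, multiplying by $x_0^{P_{12}}x_2^{P_2}$ to land in $W[[x_0,x_2]]$, and then pairing with $w'$ to see the result is a Laurent polynomial $h(z_0,z_2)$ (upper-truncation from the weight constraint). Your reduction is more conceptual, but be aware that the step ``these finiteness statements force $R(t)$ to be rational'' hides exactly the work the paper does explicitly: you cannot read off the pole orders of $R$ at $t=0$ and $t=1$ from the product side alone---you must first invoke the \emph{formal} identity from weak associativity to identify the common series, then use the iterate's $x_0$-truncation and the product's $x_2$-truncation simultaneously on that common object. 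Once you make that step precise (e.g.\ by passing to $h(z_0,z_2)$ as the paper does, or equivalently by arguing that $(1+s)^P$ times the formal re-expansion of $R$ at $s=0$ equals $(1+s)^P\widetilde R(s)$ and hence both are Laurent polynomials), the rest of your outline goes through without change.
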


\begin{proof}
    Assuming the weak associativity and fixing homogeneous $v_1, v_2\in V$ with parities $|v_1|, |v_2|\in \{0, 1\}$ and homogeneous $w\in W$, we let $F(x_0, x_2)$ to denote the series in $W((x_0, x_2))$ that is equal to both (\ref{Weak-assoc-LHS}) and (\ref{Weak-assoc-RHS}). Note that the lowest power of $x_0$ is bounded below by a constant that depends only on $(v_1, v_2)$, and the lowest power of $x_2$ is bounded below by a constant that depends only on $(v_2, w)$. Therefore, there exists $P_{12}\in \N$ depending only on $(v_1, v_2)$, $P_2\in \N$ depending only on $(v_2, w)$, such that $x_0^{P_{12}}x_2^{P_2} F(x_0, x_2)\in W[[x_0, x_2]]$. 
    
    We evaluate $x_0, x_2$ by complex numbers $z_0, x_2$, then pair $z_0^{P_{12}}z_2^{P_2}F(z_0, z_2)$ by a homogeneous $w'\in W$, then we obtain a complex series $z_0^{q_{12}}z_2^{q_2} \langle w', F(z_0, z_2)\rangle$. We claim that this series is upper-truncated and thus defines a Laurent polynomial function in $z_0, z_2$. This follows from the observation that
    $$\langle w', (v_1)_{n_1}(v_2)_{n_2} w\rangle$$
    is nonzero only when 
    $$\wt w' = \wt v_1 - n_1-1 + \wt v_2 - n_2-1 + \wt w.$$
    Therefore, the total degree of $x_0$ and $x_2$ in (\ref{Weak-assoc-LHS}), i.e., in $x_0^{q_{12}}x_2^{q_2}\langle w', F(x_0, x_2)\rangle$, is upper-truncated. Thus the claim is proved. 
    
    Let $h(z_0, z_2) = z_0^{P_{12}}z_2^{P_2}\langle w', F(z_0, z_2)\rangle$. Then  
    \begin{align*}
        h(z_0, z_2)
        = \ & z_0^{P_{12}}z_2^{P_2}(z_0+z_2)^{P+|v_1|/2}z_2^{|v_2|/2}\langle w', Y_W^\theta(v_1, z_0+z_2)Y_W^\theta(v_2, z_2)w\rangle \\
        = \ & z_0^{P_{12}}z_2^{P_2}(z_0+z_2)^{P+|v_1|/2}z_2^{|v_2|/2}\langle w', Y_W^\theta(Y_V(v_1, z_0)v_2, z_2)w\rangle
    \end{align*}
    are all polynomial functions in $z_0, z_2$. We perform the substitution $z_0 = z_1-z_2$, so as to conclude 
    \begin{align}
        h(z_1-z_2, z_2)
        = \ & (z_1-z_2)^{P_{12}}z_2^{P_2}z_1^{P+|v_1|/2}z_2^{|v_2|/2}\langle w', Y_W^\theta(v_1, z_1)Y_W^\theta(v_2, z_2)w\rangle  \label{cplx-prop-1}\\
        = \ & (z_1-z_2)^{P_{12}}z_2^{P_2}z_1^{P+|v_1|/2}z_2^{|v_2|/2}\langle w', Y_W^\theta(Y_V(v_1, z_1-z_2)v_2, z_2)w\rangle \label{cplx-prop-2}
    \end{align}
    as polynomial functions in $z_1, z_2$. Since 
    $$z_1^{P+|v_1|/2}z_2^{|v_2|/2}\langle w', Y_W^\theta(v_1, z_1)Y_W^\theta(v_2, z_2)w\rangle$$
    is lower-truncated in $z_2$, multiplying the first equation (\ref{cplx-prop-1}) by $(z_1-z_2)^{-P_{12}}z_2^{-P_2}z_1^{-P}$ under the condition $|z_1|>|z_2|>0$ results in function equation
   \begin{align*}
        \frac{h(z_1-z_2, z_2)}{(z_1-z_2)^{P_{12}}z_2^{P_2}z_1^P} = z_1^{|v_1|/2}z_2^{|v_2|/2}\langle w', Y_W^\theta(v_1, z_1)Y_W^\theta(v_2, z_2)w\rangle
    \end{align*}
    where both sides are well-defined single-valued rational functions in $z_1, z_2$. Divide both sides by the $p$-th branch of $z_1^{|v_1|/2}$ and $z_2^{|v_2|/2}$, we conclude 
    \begin{align*}
        \langle w', (Y_W^\theta)^{p}(v_1, z_1)(Y_W^\theta)^p(v_2, z_2)w\rangle = \left(\frac{h(z_1-z_2, z_2)}{(z_1-z_2)^{P_{12}}z_1^{P+|v_1|/2}z_2^{P_2+|v_2|/2}} \right)_{p,p}
    \end{align*}
    Thus (\ref{Product-2-Y_W}) converges absolutely to (\ref{Correlation-2-Y_W}) in the region $|z_1|>|z_2|>0$ where  
    $$f(z_1, z_2) = z_1^{|v_1|/2}z_2^{|v_2|/2}\frac{h(z_1-z_2, z_2)}{(z_1-z_2)^{P_{12}}z_1^{P+|v_1|}z_2^{P_2+|v_2|}}$$
    with $g(z_1, z_2) = h(z_1-z_2, z_2), q_1 = P+|v_1|, q_2= P_2 + |v_2|$. 
        
    Similarly, since 
    $$(z_0+z_2)^{|v_1|/2}z_2^{|v_2|/2}\langle w', Y_W^\theta(Y_V(v_1, z_0)v_2, z_2)w\rangle$$
    is lower-truncated in $z_0$, multiplying the second equation (\ref{cplx-prop-2}) by $(z_1-z_2)^{-P_{12}}z_2^{-P_2}z_1^{-P}$ under the condition $|z_2|>|z_1-z_2|>0$ resulting in function equation
    \begin{align*}
        \frac{h(z_1-z_2, z_2)}{(z_1-z_2)^{P_{12}}z_2^{P_2}z_1^P} = z_1^{|v_1|/2}z_2^{|v_2|/2}\langle w', Y_W^\theta(Y_V(v_1, z_1-z_2)v_2, z_2)w\rangle
    \end{align*}
    where both sides are well-defined single-valued rational functions in $z_1, z_2$. Divide both sides by the $p$-th branch of $z_1^{|v_1|/2}$ and $z_2^{|v_2|/2}$. Note that
    $$z_1^{|v_1|/2} = (z_2 + (z_1-z_2))^{|v_1|/2} = z_2^{|v_1|/2} \left(1 + \frac{z_1-z_2}{z_2}\right)^{|v_1|/2}$$
    is supposed to be expanded as a power series in $(z_1-z_2)/z_2$. Thus, $0<|z_1-z_2|<|z_2|$ for every fixed $z_2\in \C$. In particula, we see that 
    $$z_1 = z_2\cdot \frac{z_1}{z_2} = 1 + \frac{z_1-z_2}{z_2}$$
    lies the disk centered at $z=1$ with radius $1$ on the complex plane. Thus necessarily, $|\arg z_1 - \arg z_2|<\pi/2$. Therefore, we conclude that 
    \begin{align*}
        \langle w', (Y_W^\theta)^p (Y_V(v_1, z_0)v_2, z_2)w\rangle = \left(\frac{h(z_1-z_2, z_2)}{(z_1-z_2)^{P_{12}}z_1^{P+|v_1|/2}z_2^{P_2+|v_2|/2}} \right)_{p,p}
    \end{align*}
    converges absolutely in the region $|z_2|>|z_1-z_2|>0, |\arg z_1 - \arg z_2|<\pi/2$ to (\ref{Correlation-2-Y_W}) with the same choice of $f$. 

    Assuming the associativity axiom, we may obtain the weak associativity axiom by reversing the above arguments in this proposition. We shall not repeat the details here. 
\end{proof}

\begin{rema}
    The requirement of $|\arg z_1 - \arg z_2|<\pi/2$ was pointed out by Robert McRae in the study of twisted modules for a VOA. 
\end{rema}

\begin{rema}
    \cite{H-Twist} requires an equivariance condition on the twisted vertex operator, namely, for $v\in V, p\in \Z$, 
    $$(Y_W^\theta)^p(\theta v, z) = (Y_W^\theta)^{p+1}(v, z). $$
    Although the example we construct in Section 3 - 5 (trivially) satisfy this condition, we decide not to place the equivariance condition in the axioms. 
\end{rema}

\subsection{Products and iterates of any numbers of vertex operators}

\begin{prop}\label{n-prod-prop}
    Let $(W, Y_W^\theta, \d_W, D_W)$ be a $\theta$-twisted $V$-module, then for every $n\in \N$, every $v_1, ..., v_n\in V$ of parities $|v_1|, ..., |v_n|\in \{0, 1\}$, every $w\in W, w'\in W'$, there exists an algebraic function 
    $$f(z_1, ..., z_n) = \prod_{i=1}^n z_i^{|v_i|/2} \frac{g(z_1, ..., z_n)}{\prod\limits_{i=1}^n z_i^{q_i} \prod\limits_{1\leq i < j \leq n}(z_i-z_j)^{q_{ij}}}, $$
    such that for every $p\in \Z$
    $$\langle w', (Y_W^\theta)^p(v_1, z_1) \cdots (Y_W^\theta)^p(v_n, z_n)w\rangle$$
    converges absolutely in the region $|z_1|>\cdots > |z_n|$ to the $p$-th branch 
    $$f_{p, ..., p}(z_1, ..., z_n) = \prod_{i=1}^n e^{\frac{|v_i|} 2 l_p(z_i)} \frac{g(z_1, ..., z_n)}{\prod\limits_{i=1}^n z_i^{q_i} \prod\limits_{1\leq i < j \leq n}(z_i-z_j)^{q_{ij}}},$$
    of $f(z_1, ..., z_n)$. 
\end{prop}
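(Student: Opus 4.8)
The plan is to prove this by induction on $n$, using the two-variable associativity axiom (the preceding Proposition) as the base case and building up the correlation function of $n$ operators from that of $n-1$ operators. The key structural observation is that associativity lets us convert a product of vertex operators into an iterate, and that iterates are controlled by the two-variable case together with the convergence/algebraicity already established for the MOSVA $V$ itself (from Part I).

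\begin{proof}
We argue by induction on $n$. The cases $n=0$ and $n=1$ are trivial, and $n=2$ is the content of the preceding proposition. Assume the statement holds for products of fewer than $n$ operators. Fix homogeneous $v_1, \dots, v_n \in V$, $w\in W$, and $w'\in W'$.

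First I would establish that the formal series
$$\langle w', Y_W^\theta(v_1, x_1)\cdots Y_W^\theta(v_n, x_n)w\rangle$$
is a well-defined element of $\C[[x_1^{1/2}, x_1^{-1/2}, \dots, x_n^{1/2}, x_n^{-1/2}]]$ that, in each variable, is lower-truncated (by the remark following Definition \ref{Def}) and, after pairing with the homogeneous $w'$, upper-truncated by the weight-grading argument already used in the proof of the two-variable proposition. The total degree in all variables is fixed by $\wt w' - \wt w - \sum_i \wt v_i$, so the series, multiplied by a suitable monomial $\prod_i x_i^{N_i}$, lies in $\C[[x_1,\dots,x_n]][x_1^{|v_1|/2},\dots]$ and is in fact a Laurent polynomial in the integer powers times the half-integer correction $\prod_i x_i^{|v_i|/2}$.

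The inductive step is the main work. Applying weak associativity to the first two operators, I would expand
$$Y_W^\theta(v_1, x_1)Y_W^\theta(v_2, x_2) = Y_W^\theta(Y_V(v_1, x_1-x_2)v_2, x_2)$$
(after clearing poles by the factor $(x_1-x_2)^{P_{12}}$ and appropriate powers of $x_1,x_2$), reducing the $n$-fold product to a finite sum of $(n-1)$-fold products of the form $\langle w', Y_W^\theta(u, x_2)Y_W^\theta(v_3,x_3)\cdots Y_W^\theta(v_n,x_n)w\rangle$, where $u$ runs over the homogeneous components of $Y_V(v_1,x_1-x_2)v_2$. Each such $(n-1)$-fold product converges, by the induction hypothesis, to an algebraic function of the stated shape in the region $|x_2|>|x_3|>\cdots>|x_n|$; expanding $Y_V(v_1, x_1-x_2)v_2$ in $(x_1-x_2)$ and using the rationality-with-poles of the $V$-correlation functions from Part I to control the dependence on $x_1-x_2$, one assembles an algebraic function $f(z_1,\dots,z_n)$ of the required form. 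The pole factors $z_i^{q_i}$, $(z_i-z_j)^{q_{ij}}$ depend only on the indicated pairs because the pole orders $P_{12}, P_2$ in weak associativity depend only on $(v_1,v_2)$ and $(v_2,w)$ respectively, and the induction carries this dependence forward; the prefactor $\prod_i z_i^{|v_i|/2}$ comes from tracking the parities $|u| \equiv |v_1|+|v_2| \pmod 2$ through the expansion.

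The main obstacle, and the point requiring the most care, is the region of convergence: associativity gives convergence of the iterate only in $|x_2|>|x_1-x_2|$ with the angular constraint $|\arg z_1 - \arg z_2|<\pi/2$, whereas the product is claimed to converge in $|z_1|>\cdots>|z_n|$. I would resolve this exactly as in the two-variable case: the formal-variable identity of Laurent-polynomial-valued series is absolute, so the single-valued rational/algebraic function obtained after clearing denominators is independent of the region; the two expansions are then different power-series expansions of one and the same algebraic function $f$, and each converges absolutely in its own domain to the common $p$-th branch $f_{p,\dots,p}$. Choosing the $p$-th branch consistently across all factors $e^{(|v_i|/2)l_p(z_i)}$ is automatic because each $Y_W^\theta(v_i,\cdot)$ is evaluated on the same branch $p$, and the product of branches equals the branch of the product since the exponents $|v_i|/2$ add. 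This completes the induction.
\end{proof}
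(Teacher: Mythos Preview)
Your induction via weak associativity is the right idea, but you collapse the product in the wrong direction, and this creates a genuine gap. The paper (deferring to Theorem 3.4 of \cite{Q-Mod}) collapses toward $w$: one writes the product as $Y_W^\theta(v_1, x_1+x_n)\cdots Y_W^\theta(v_{n-1}, x_{n-1}+x_n)Y_W^\theta(v_n, x_n)w$, applies weak associativity first to the pair $(v_{n-1}, v_n)$ against $w$, then to $v_{n-2}$ against $Y_V(v_{n-1}, x_{n-1})v_n$ and $w$, and so on, ending with a single $Y_W^\theta$ applied to an $(n-1)$-fold $Y_V$-iterate. This works because the exponent $P$ in the axiom depends only on $(v_1, w)$ and \emph{not} on $v_2$, so the same $P$ serves for every coefficient of the iterated $Y_V$-expression sitting in the second slot.

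Your left-collapse requires applying weak associativity to $Y_W^\theta(v_1,x_1)Y_W^\theta(v_2,x_2)$ with the role of $w$ played by the coefficients of $Y_W^\theta(v_3,x_3)\cdots Y_W^\theta(v_n,x_n)w$. These coefficients have unbounded weight, so no uniform $P$ is furnished by the axiom; weak associativity is not an operator identity. One can salvage this after pairing with $w'$ (then only finitely many coefficients contribute, by the weight argument you sketch), but you do not say so, and once you do, the claim that you obtain ``a finite sum of $(n-1)$-fold products'' is still wrong: $Y_V(v_1,x_1-x_2)v_2$ has infinitely many homogeneous components $u$, and the induction hypothesis gives pole orders $q_i, q_{ij}$ depending on $u$, so summing over $u$ to recover an algebraic function of the required shape needs an argument you have not supplied. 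The right-collapse avoids all of this.
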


\begin{proof}
    It suffices to modify the weak associativity arguments of Theorem 3.4 in \cite{Q-Mod} by modifying the products of the series as 
    $$(x_1+x_3)^{|v_1|/2}(x_2+x_3)^{|v_2|/2}x_3^{p_3/2}Y_W^\theta(v_1, x_1+x_3)Y_W^\theta(v_2, x_2+x_3)Y_W^\theta(v_3, x_3)w. $$
    The rest of the arguments follow verbatim with this modification. We shall not repeat the details here. 
\end{proof}

\begin{prop}\label{n-iter-prop}
With same notations as Proposition \ref{n-prod-prop}, the series
$$\langle w', Y_W^\theta(Y_V(\cdots Y_V(Y_V(u_1, z_1-z_2)u_2, z_2-z_3)u_3 \cdots, z_{n-1}-z_n )u_n, z_n)w \rangle$$
converges absolutely in the region 
\begin{equation}\label{IterRegion}
\left\{(z_1, ..., z_n)\in \C^n: \begin{aligned}
&|z_n|>|z_i-z_n|>0, |\arg z_i - \arg z_n|< \pi/2, i = 1, ..., n; \\
&|z_i-z_{i+1}|>|z_j-z_i|>0, 1\leq j < i \leq n-1 \end{aligned}\right\}
\end{equation}
to $f_{p, p}(z_1, ..., z_n)$. 
\end{prop}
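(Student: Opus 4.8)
The plan is to argue by induction on $n$, combining three ingredients already available: the two-variable associativity established before Proposition \ref{n-prod-prop}, the associativity of the MOSVA $V$ from \cite{FQ} (which guarantees that the inner $V$-iterate converges as an element of $\overline V$), and the convergence of the full $n$-fold product to $f_{p,\dots,p}$ from Proposition \ref{n-prod-prop}. The case $n=1$ is immediate from the $\d$-grading, and the case $n=2$ is exactly the two-variable associativity, whose iterate region $|z_2|>|z_1-z_2|>0$, $|\arg z_1-\arg z_2|<\pi/2$ is precisely (\ref{IterRegion}) for $n=2$. The structural fact driving the whole argument is that the half-integral factors $z_i^{|v_i|/2}$ in $f$ are produced only by the outer, module-level operator $Y_W^\theta$, while the inner iterate is assembled from $Y_V$ and carries only integral powers of the differences; this is the reason (\ref{IterRegion}) separates into a family of conditions with branch/argument constraints measured against $z_n$ and a family of purely modular conditions among the differences.

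For the inductive step I would peel off the outermost module-level associativity. Denoting by $J\in\overline V$ the inner $(n-1)$-fold $V$-iterate in $z_1,\dots,z_{n-1}$, the full iterate is $Y_W^\theta\big(Y_V(J,z_{n-1}-z_n)u_n,z_n\big)w$. Feeding the slots $v_1=J$ at $z_{n-1}$ and $v_2=u_n$ at $z_n$ into the two-variable associativity rewrites this as $Y_W^\theta(J,z_{n-1})\big(Y_W^\theta(u_n,z_n)w\big)$; but $Y_W^\theta(J,z_{n-1})(\cdot)$ is exactly the $(n-1)$-fold module iterate in $z_1,\dots,z_{n-1}$ evaluated on the completed state $w'':=Y_W^\theta(u_n,z_n)w\in\overline W$. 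The inductive hypothesis identifies this with the $(n-1)$-fold product $Y_W^\theta(u_1,z_1)\cdots Y_W^\theta(u_{n-1},z_{n-1})w''$, and reinstating $w''$ yields the full product $\langle w', Y_W^\theta(u_1,z_1)\cdots Y_W^\theta(u_n,z_n)w\rangle$, which equals $f_{p,\dots,p}$ by Proposition \ref{n-prod-prop}. Because $f$ is one fixed algebraic function, the equality established on the subdomain where all the nested expansions are simultaneously valid then propagates by the identity theorem to all of the connected region (\ref{IterRegion}), once the iterate series is known to converge there.

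That convergence, and the precise shape of (\ref{IterRegion}), I would read off from the nested expansions used in the peeling. At the module layer each factor is expanded as $z_i^{|v_i|/2}=z_n^{|v_i|/2}\big(1+(z_i-z_n)/z_n\big)^{|v_i|/2}$, a power series in $(z_i-z_n)/z_n$ that converges exactly when $|z_i-z_n|<|z_n|$ and whose branch matches $f_{p,\dots,p}$ exactly when $z_i/z_n$ lies in the disk $|w-1|<1$, i.e. when $|\arg z_i-\arg z_n|<\pi/2$; this produces the first family of conditions and reproduces McRae's constraint already seen at $n=2$. The remaining modular conditions $|z_i-z_{i+1}|>|z_j-z_i|>0$ are simply the domain of absolute convergence of the inner $V$-iterate $J$, which involves only integral powers of the differences and hence no branch ambiguity, and follow from the associativity of $V$ in \cite{FQ}.

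The hard part will be legitimizing the two passages to the completion in the inductive step. The two-variable associativity is proved for genuine homogeneous $v_1,v_2\in V$ and $w\in W$, whereas here $v_1=J$ is an $\overline V$-valued series and $w''$ is an $\overline W$-valued one, so one must interchange the associativity identity with the infinite sums defining $J$ and $w''$. This is exactly where absolute convergence on (\ref{IterRegion}) is indispensable: pairing with a fixed $w'\in W'$, the $\d$-grading forces only finitely many homogeneous components to contribute at each total weight, and the absolute convergence of the $n$-fold product from Proposition \ref{n-prod-prop} licenses the rearrangement. Carrying out this bookkeeping while simultaneously tracking the branch of every half-integral power through the geometric expansions is the technical core; it is a direct adaptation of the weak-associativity arguments of \cite{Q-Mod} already invoked for Proposition \ref{n-prod-prop}, now organized around the variable $z_n$ rather than the largest variable $z_1$.
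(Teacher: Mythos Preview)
Your proposal and the paper's proof both point back to \cite{Q-Mod} and both isolate the correct structural observation: after multiplying through by $\prod_i z_i^{|v_i|/2}$ one is in the integer-power setting handled there, and the conditions $|\arg z_i-\arg z_n|<\pi/2$ arise purely from matching branches of those half-integer factors. The paper's own proof is simply a one-line citation of the analytic-continuation argument of Theorem~4.10 in \cite{Q-Mod}, applied to the series $\prod_i z_i^{|v_i|/2}\langle w',Y_W^\theta(Y_V(\cdots)u_n,z_n)w\rangle$; it does not set up an induction on $n$.

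Your explicit peeling induction, however, does not track the region the way you claim. After stripping one layer you have $Y_W^\theta(J,z_{n-1})$ acting on $w''=Y_W^\theta(u_n,z_n)w$, and the inductive hypothesis on that $(n{-}1)$-fold iterate is centred at $z_{n-1}$, not at $z_n$: it produces the conditions $|z_{n-1}|>|z_i-z_{n-1}|$ and $|\arg z_i-\arg z_{n-1}|<\pi/2$ for $i\le n-2$, together with the difference conditions only for $1\le j<i\le n-2$. None of these are the conditions in (\ref{IterRegion}), which are all measured against $z_n$ and, crucially, include the $i=n-1$ case $|z_{n-1}-z_n|>|z_j-z_{n-1}|$ of the second family. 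So your assertion that ``each factor is expanded as $z_i^{|v_i|/2}=z_n^{|v_i|/2}(1+(z_i-z_n)/z_n)^{|v_i|/2}$'' is not what your induction actually delivers; the nested peeling expands about $z_{n-1}$, then $z_{n-2}$, and so on, not about $z_n$ in one shot. The region you genuinely obtain merely overlaps (\ref{IterRegion}). You then rely on the identity theorem to propagate the equality, but that step presupposes absolute convergence of the iterate on all of (\ref{IterRegion}), which you have not established independently of the induction. In the end, the analytic-continuation step is carrying essentially the entire load---which is exactly why the paper skips the peeling altogether and cites \cite{Q-Mod} directly.
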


\begin{proof}
    It suffices to modify the analytic continuation arguments of Theorem 4.10 in \cite{Q-Mod} to the series 
    $$\prod_{i=1}^n z_i^{|v_i|/2} \langle w', Y_W^\theta(Y_V(\cdots Y_V(Y_V(u_1, z_1-z_2)u_2, z_2-z_3)u_3 \cdots, z_{n-1}-z_n )u_n, z_n)w \rangle.$$
    The rest of the arguments follow verbatim with this modification. We shall not repeat the details here. 
\end{proof}

\section{Fermionic construction of vertex operators}

\subsection{The algebra $N(\hat{h}_{\Z})$ and its induced modules $W$}

Let $\h = \C^{2M}$ be a vector space with a nondegenerate symmetric bilinear form $(\cdot, \cdot)$, such that 
$$\h = \mathfrak{p}\oplus \mathfrak{q}$$ 
is a polarization into maximal isotropic subspaces $\mathfrak{p}, \mathfrak{q} \simeq \C^{M}$. %Let $e_1^+, ..., e_l^+$ be a basis of $\h^+$, $e_1^-, ..., e_l^-$ be a basis of $\h^-$ such that 
%$$(e_i^+, e_j^+) = (e_i^-, e_j^-) = 0, (e_i^+, e_j^-) = \delta_{ij}, i, j = 1, ..., l. $$

We consider the affinization 
$$\hat{\h}_{\Z} = \h \otimes \C[t, t^{-1}] \oplus \C \mathbf{k}$$
As a vector space, $\hat{\h}_{\Z} = \hat{\h}_{\Z}^+ \oplus \hat{\h}_{\Z}^- \oplus \hat{\h}_{\Z}^0$, where 
$$\hat{\h}_{\Z}^\pm = t^{\pm 1}\C[t^{\pm 1}], \hat{\h}_{\Z}^0 =\h \otimes t^0 \oplus \C \mathbf{k}$$
Let 
$$T(\hat{\h}_{\Z}) = \C \oplus  \hat{\h}_{\Z} \oplus (\hat{\h}_{\Z})^{\otimes 2} \oplus  \cdots. $$
be the tensor algebra of $\hat{\h}_{\Z}$. Consider the quotient $N(\hat{\h}_{\Z})$ of $T(\hat{\h}_{\Z})$ by the two-sided ideal $J$ generated by 
\begin{eqnarray}
&(a\otimes t^{m})\otimes (b\otimes t^{n})
+ (b\otimes t^{n})\otimes (a\otimes t^{m})\label{ideal-1}
-m(a, b)\delta_{m+n, 0}\mathbf{k},&\\
&(a\otimes t^{p})\otimes (b\otimes t^{0})
+ (b\otimes t^{0})\otimes (a\otimes t^{p})\label{ideal-2}&\\
&(a\otimes t^{q})\otimes \mathbf{k}-\mathbf{k}\otimes (a\otimes t^{q}), & \label{ideal-3}
\end{eqnarray}
for $a, b\in \mathfrak{h}$, $m\in \Z_{+}$, $n\in -\Z_{+}$, $p\in \Z\setminus\{0\}, q\in \Z$. 

\begin{rema}
    Note that we do not have any relations between $a\otimes t^{m}$ and $b\otimes t^{n}$ if $m,n$ are both positive, or both negative, or both zero. 
\end{rema}

\begin{prop}\label{PBW}
    As a vector space, $N(\hat{\h}_{\Z})$ is isomorphic to $T(\hat{\h}_{\Z}^-)\otimes T(\hat{\h}_{\Z}^+) \otimes T(\h\otimes t^0)\otimes T(\C\mathbf{k})$
    where $T(\hat{\h}_{\Z}^-)$, $T(\hat{\h}_{\Z}^+)$, $T(\h\otimes t^0)$ and $T(\C\mathbf{k})$ are tensor algebras of $\hat{\h}_{\Z}^-$, $\hat{\h}_{\Z}^+$, $\h\otimes t^0$ and $\C\mathbf{k}$, respectively. 
\end{prop}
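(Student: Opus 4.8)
The plan is to prove this Poincar\'e--Birkhoff--Witt-type statement with Bergman's Diamond Lemma. First I would fix a basis $\{e_1,\dots,e_{2M}\}$ of $\h$ and take as generators of the free algebra the symbols $e_i\otimes t^n$ ($1\le i\le 2M$, $n\in\Z$) together with $\mathbf{k}$; by bilinearity of $(\cdot,\cdot)$ the ideal $J$ is generated by the basis-element instances of (\ref{ideal-1})--(\ref{ideal-3}). These translate into a rewriting system with three families of rules, each oriented toward the target block ordering ``negatives, then positives, then zeros, then $\mathbf{k}$'': rule R1, $(e_i\otimes t^m)(e_j\otimes t^n)\mapsto -(e_j\otimes t^n)(e_i\otimes t^m)+m(e_i,e_j)\delta_{m+n,0}\mathbf{k}$ for $m>0>n$, sorting a positive mode past a negative one; rule R2, $(e_j\otimes t^0)(e_i\otimes t^p)\mapsto -(e_i\otimes t^p)(e_j\otimes t^0)$ for $p\neq 0$, pushing zero modes to the right; and rule R3, $\mathbf{k}(e_i\otimes t^n)\mapsto (e_i\otimes t^n)\mathbf{k}$, carrying the central element to the far right. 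A word is irreducible exactly when it has the shape (negatives)(positives)(zeros)($\mathbf{k}$-powers), with no internal order imposed inside any block, precisely because the remark preceding the statement guarantees no relation between two negatives, two positives, two zeros, or two copies of $\mathbf{k}$.

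Next I would install a compatible semigroup order satisfying the descending chain condition. Totally ordering the generators by negatives $<$ positives $<$ zeros $<\mathbf{k}$, and breaking ties inside each (countable) block so that the whole generating set has order type an ordinal, the degree-lexicographic order on words (by length, then lexicographically) is a well-order compatible with the algebra multiplication. Each rule replaces its left side by a linear combination of strictly smaller words: R1's main term is lexicographically smaller at equal length while its $\mathbf{k}$-term is strictly shorter, and R2, R3 each lower the lexicographic rank at equal length. This yields termination.

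The only step with genuine content is confluence, i.e.\ resolving all ambiguities. Since every rule has a length-two left side there are no inclusion ambiguities, and a short case analysis on generator types shows the overlap ambiguities are exactly the four words of the shapes $(Z,P,N)$, $(K,P,N)$, $(K,Z,P)$, $(K,Z,N)$, where $Z,P,N,K$ denote zero, positive, negative modes and $\mathbf{k}$. For each I reduce by the two competing rules and check the outcomes coincide. The delicate case is $(e_l\otimes t^0)(e_i\otimes t^m)(e_j\otimes t^n)$ with $m>0>n$: reducing the left pair first versus the right pair first must both land on $-(e_j\otimes t^n)(e_i\otimes t^m)(e_l\otimes t^0)+m(e_i,e_j)\delta_{m+n,0}(e_l\otimes t^0)\mathbf{k}$, the agreement hinging on the sign accrued as the zero mode anticommutes past both nonzero modes and on R3 transporting the central $\mathbf{k}$ past the zero mode. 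The three $\mathbf{k}$-overlaps resolve routinely by centrality of $\mathbf{k}$. I expect this confluence bookkeeping --- together with confirming the order is genuinely well-founded over the infinite generating set --- to be the main (and only real) obstacle; everything else is formal.

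With confluence and DCC established, the Diamond Lemma asserts that the irreducible words form a basis of $N(\hat{\h}_{\Z})$. As these are exactly the block-ordered monomials, the multiplication map $T(\hat{\h}_{\Z}^-)\otimes T(\hat{\h}_{\Z}^+)\otimes T(\h\otimes t^0)\otimes T(\C\mathbf{k})\to N(\hat{\h}_{\Z})$ sends the evident tensor-product basis bijectively onto this basis, hence is a linear isomorphism, which is precisely the claim. (An equivalent route, should one prefer to avoid the Diamond Lemma, is to exhibit the block-ordered monomials as linearly independent operators on the induced module, combining this with the spanning statement supplied by R1--R3; but the Diamond Lemma packages both halves at once.)
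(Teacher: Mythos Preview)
Your proposal is correct and the Diamond Lemma argument goes through as you describe: the three oriented rules are the right ones for the block ordering negatives--positives--zeros--$\mathbf{k}$, the degree-lexicographic order over a well-ordered generating set gives termination, and your list of four overlap words $(Z,P,N)$, $(K,P,N)$, $(K,Z,P)$, $(K,Z,N)$ is exhaustive and each resolves (the $(Z,P,N)$ computation you sketch is indeed the only one where the central $\mathbf{k}$-term and the signs require care).

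The paper takes a different but closely related route: rather than invoking Bergman, it follows Jacobson's direct proof of PBW. After the easy spanning argument it constructs by hand a linear map $L:T(\hat{\h}_{\Z})\to T(\hat{\h}_{\Z})$ that fixes block-ordered monomials and kills the ideal $J$, defining $L$ recursively by applying a single rewrite step and then showing by a double induction (on tensor degree and on the ``defect'', the number of out-of-order pairs) that the definition is independent of which adjacent pair one straightens first. The case analysis in that induction---adjacent versus separated misordered pairs, with the various generator types---is exactly the confluence check you do, only phrased as well-definedness of $L$ instead of resolvability of ambiguities. Your approach is more modular (termination and confluence are cleanly separated, and the Diamond Lemma supplies the conclusion once both are verified), while the paper's is entirely self-contained and avoids citing an external result. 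Neither buys any real extra generality here; they are two standard packagings of the same computation.
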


\begin{proof}
    View $T(\hat{\h}_{\Z}^-)\otimes T(\hat{\h}_{\Z}^+) \otimes  T(\h\otimes t^0)\otimes T(\C\mathbf{k})$ as a subspace of $T(\hat{\h}_{\Z})$. We first show that
    $$T(\hat{\h}_{\Z}) = T(\hat{\h}_{\Z}^-)\otimes T(\hat{\h}_{\Z}^+)  \otimes  T(\h\otimes t^0) \otimes T(\C\mathbf{k}) + J. $$
    Clearly, $T(\hat{\h}_{\Z})$ is spanned by elements $u_1\otimes \cdots u_n$, where each $u_i$ is either of the form $h\otimes t^{m}, m\in \Z$ or $\mathbf{k}$. Using (\ref{ideal-1}), (\ref{ideal-2}) and (\ref{ideal-3}), we may arrange the terms $h\otimes t^{m}$ with $m<0$ to the left, followed by the terms $h\otimes t^m$ with $m>0$, then followed by the terms $h\otimes t^0$, then followed by $\mathfrak{k}$, by modifying an element in $J$. So each $u_1\otimes \cdots \otimes u_n$ in the spanning set can be a sum of an element of the form
    $$(h_1\otimes t^{m_1}) \otimes \cdots \otimes (h_j\otimes t^{m_j}) \otimes (h_{j+1}\otimes t^{m_{j+1}}) \otimes \cdots \otimes (h_k\otimes t^{m_k}) \otimes (h_{k+1}\otimes t^0) \otimes \cdots \otimes (h_l\otimes t^0)\otimes \mathbf{k} \otimes \cdots \otimes \mathbf{k}$$
    and an element in the ideal $J$.

    We now show that the sum is indeed direct, i.e., the intersection of $T(\hat{\h}_{\Z}^-)\otimes T(\hat{\h}_{\Z}^+) \otimes T(\h \otimes t^0)\otimes T(\C\mathbf{k})$ and $J$ is trivial. We follow the same procedure as Jacobson's proof of the linear independence part Poincar\'e-Birkhoff-Witt theorem (see \cite{G-PBW} for an exposition, see also \cite{H-MOSVA}) by constructing a map $L: T(\hat{\h}_{\Z})\to T(\hat{\h}_{\Z})$ such that 
    $$L(J) = 0, L|_{T(\hat{\h}_{\Z}^-)\otimes T(\hat{\h}_{\Z}^+) \otimes T(\h \otimes t^0)\otimes T(\C\mathbf{k}))} = 1. $$
    Then every $v$ in the intersection must satisfy both $L(v) = 0$ and $L(v) = v$, which implies the conclusion. 

    Consider a basis $e_1, ..., e_{2M}$ of $\h$. Then $T(\hat{\h}_{\Z})$ has a basis $u_1\otimes \cdots \otimes u_n$, where each $u_i$ is either of the form $e_{k_i}\otimes t^{m_i}$ or of the form $\mathbf{k}$. A pair of elements $(u_i, u_j)$ with $i<j$ is \textit{of wrong order}, if either of the following cases happen: (i) $u_i = e_{k_i}\otimes t^{m_i}, u_j = e_{k_j} \otimes t^{m_j}$ for some $k_i, k_j\in \{1, ..., 2M\}$ and some $m_i, m_j\in \Z$ satisfying one of the following: (a) $m_i > 0, m_j = 0$; (b) $m_i > 0, m_j < 0$; (c) $m_i = 0, m_j < 0$; (ii) $u_i = \mathbf{k}$, $u_j =  e_{k_j}\otimes t^{m_j}$ for some $k_i\in \{1, ..., 2M\}, m_j \in \Z$. We define the \textit{defect} of $u_1\otimes \cdots \otimes u_n$ as the number of pairs of elements $(u_i, u_j)$ with $i<j$ that are of wrong order. 
    
    We define $L$ inductively on the basis elements $u_1 \otimes \cdots \otimes u_n$ by the following rule: $L$ acts on the basis elements with zero defect as identity. In particular, $L$ acts on the degree-zero tensors in $\C$ and the degree-1 tensors in $\hat{\h}_{\Z}$ as identity. For other basis elements, we define
    \begin{align*}
        & L(u_1\otimes \cdots \otimes (e_{k_i}\otimes t^{m_i}) \otimes (e_{k_{i+1}}\otimes t^{m_{i+1}}) \otimes  \cdots \otimes u_n) \\
        = \ &-L(u_1\otimes \cdots \otimes (e_{k_{i+1}}\otimes t^{m_{i+1}})\otimes (e_{k_i}\otimes t^{m_i}) \otimes \cdots \otimes u_n)\\
         & + m_{i}(e_{k_{i}}, e_{k_{i+1}}) \delta_{m_i+m_{i+1},0}L(u_1 \otimes \cdots \otimes u_{i-1} \otimes \mathbf{k} \otimes u_{i+2} \otimes \cdots \otimes u_n)
    \end{align*}
    whenever $m_{i}>0$ and $m_{i+1}<0$; we define
    \begin{align*}
        & L(u_1\otimes \cdots \otimes (e_{k_i}\otimes t^{m_i}) \otimes (e_{k_{i+1}}\otimes t^{m_{i+1}}) \otimes  \cdots \otimes u_n) \\
        = \ &-L(u_1\otimes \cdots \otimes (e_{k_{i+1}}\otimes t^{m_{i+1}})\otimes (e_{k_i}\otimes t^{m_i}) \otimes \cdots \otimes u_n)
    \end{align*}
    whenever $m_i > 0$ and $m_{i+1} = 0$, or $m_i = 0$ and $m_{i+1}<0$ we define
    \begin{align*}
         & L(u_1\otimes \cdots \otimes \mathbf{k} \otimes (e_{k_{i+1}}\otimes t^{m_{i+1}}) \otimes  \cdots \otimes u_n) \\
        = \ & L(u_1\otimes \cdots \otimes (e_{k_{i+1}}\otimes t^{m_{i+1}})\otimes \mathbf{k} \otimes \cdots \otimes u_n) 
    \end{align*}
    whenever $m_{i+1}\in \Z$. 
    
    We use induction to show that $L$ is well-defined. For basis elements of defect 0, the definition clearly has no ambiguities. For basis element with defect 1, if the pair $(u_i, u_j)$ is of wrong order, then necessarily, $j = i+1$. The definition would then have no ambiguities. Assume that $L$ has no ambiguity for all basis elements with smaller defects and for all tensors of lower degrees. Consider a basis element with defect at least 2. Necessarily, there exists two indices $i<j$ such that the pairs $(u_i, u_{i+1})$ and $(u_j, u_{j+1})$ are both of wrong order. 
    
    If $i+1<j$, then we may express $L(u_1 \otimes \cdots \otimes u_i\otimes u_{i+1} \otimes \cdots \otimes u_j \otimes u_{j+1} \otimes \cdots \otimes u_n)$ as a unambiguous sum of $L(u_1\otimes \cdots \otimes u_{i+1} \otimes u_i \otimes \cdots \otimes u_{j+1}\otimes u_j \otimes \cdots \otimes u_n)$, which is of lower defect, and the images of $L$ on lower-degree tensors. From the induction hypothesis, they are all well-defined. For example, if $u_s = e_{k_s} \otimes t^{m_s}$ for $s = i, i+1, j, j+1$ with $m_{i}, m_{j} > 0, m_{i+1}, m_{j+1}< 0$, then 
    \begin{align*}
        & L(u_1\otimes \cdots \otimes u_i \otimes u_{i+1} \otimes \cdots \otimes u_{j}\otimes u_{j+1} \otimes \cdots \otimes u_n)\\
        = \ & - L(u_1\otimes \cdots \otimes u_{i+1} \otimes u_{i} \otimes \cdots \otimes u_{j}\otimes u_{j+1} \otimes \cdots \otimes u_n) \\ 
        & + m_i(e_{k_i}, e_{k_{i+1}})\delta_{m_i+m_{i+1}, 0} L(u_1\otimes \cdots \otimes u_{i-1} \otimes \mathbf{k} \otimes u_{i+2} \otimes \cdots \otimes u_{j}\otimes u_{j+1} \otimes \cdots \otimes u_n)\\
        = \ &  L(u_1\otimes \cdots \otimes u_{i+1} \otimes u_{i} \otimes \cdots \otimes u_{j+1}\otimes u_j \otimes \cdots \otimes u_n) \\ 
        & - m_j(e_{k_j}, e_{k_{j+1}})\delta_{m_j+m_{j+1}, 0} L(u_1\otimes \cdots \otimes u_{i-1} \otimes \mathbf{k} \otimes u_{i+2} \otimes \cdots \otimes u_{j-1} \otimes \mathbf{k}\otimes u_{j+2} \otimes \cdots \otimes u_n)\\
        & - m_i(e_{k_i}, e_{k_{i+1}})\delta_{m_i+m_{i+1}, 0} L(u_1\otimes \cdots \otimes u_{i-1} \otimes \mathbf{k} \otimes u_{i+2} \otimes \cdots \otimes u_{j}\otimes u_{j+1} \otimes \cdots \otimes u_n)\\
        & + m_i(e_{k_i}, e_{k_{i+1}})\delta_{m_i+m_{i+1}, 0}m_j(e_{k_j}, e_{k_{j+1}})\delta_{m_j+m_{j+1}, 0} \\
        & \qquad \cdot L(u_1\otimes \cdots \otimes u_{i-1} \otimes \mathbf{k} \otimes u_{i+2} \otimes \cdots \otimes u_{j-1} \otimes \mathbf{k}\otimes u_{j+2} \otimes \cdots \otimes u_n)
    \end{align*}
    Starting with $j$ instead of $i$ results in the same expression. So $L(u_1\otimes \cdots \otimes u_n)$ is unambiguously defined with the current choice of $u_s$. For other choices of $u_s$ that involve $e_{k_s\otimes t^0}$ or $\mathbf{k}$, we can proceed with a similar argument to get the same conclusion. Therefore, the inductive step is proved when $i+1 < j$. 

    If $i+1 = j$, then we may also express $L(u_1 \otimes \cdots \otimes u_i \otimes u_{i+1} \otimes u_{i+2} \otimes \cdots \otimes u_n)$ as an unambiguous sum of $L(u_1 \otimes \cdots u_{i+2} \otimes u_{i+1} \otimes u_i \otimes \cdots \otimes u_n)$, which is lower defect, and the images of $L$ on lower-degree tensors. For example, let $u_i = \mathbf{k}$, $u_t = e_{k_t}\otimes t^{m_{t}}$ for $t=i+1, i+2$ with $m_{i+1}>0, m_{i+2}<0$. If we first swap $u_i, u_{i+1}$, then swap $u_i, u_{i+2}$, finally swap $(u_{i+1}, u_{i+2})$, then we get
    \begin{align*}
        & L(u_1 \otimes \cdots \otimes u_i \otimes u_{i+1} \otimes u_{i+2} \otimes \cdots \otimes u_n) \\
        = \ & L(u_1 \otimes \cdots \otimes u_{i+1} \otimes u_{i} \otimes u_{i+2} \otimes \cdots \otimes u_n)\\
        = \ & L(u_1 \otimes \cdots \otimes u_{i+1} \otimes u_{i+2} \otimes u_{i} \otimes \cdots \otimes u_n)\\
        = \ & L(u_1 \otimes \cdots \otimes u_{i+2} \otimes u_{i+1} \otimes u_{i} \otimes \cdots \otimes u_n) \\
        & + m_{i+1}(e_{k_{i+1}}, e_{k_{i+2}}) \delta_{m_{i+1}+m_{i+2}, 0} L(u_1 \otimes \cdots \otimes u_{i-1} \otimes \mathbf{k} \otimes u_i \otimes  \cdots \otimes u_n)
    \end{align*}
    On the other hand, if we first swap $u_{i+1}, u_{i+2}$, then swap $u_i, u_{i+2}$, finally swap $u_i, u_{i+1}$, then we get
    \begin{align*}
        & L(u_1 \otimes \cdots \otimes u_i \otimes u_{i+1} \otimes u_{i+2} \otimes \cdots \otimes u_n) \\
        = \ & L(u_1 \otimes \cdots \otimes u_{i} \otimes u_{i+2} \otimes u_{i+1} \otimes \cdots \otimes u_n)\\
        & + m_{i+1}(e_{k_{i+1}}, e_{k_{i+2}}) \delta_{m_{i+1}+m_{i+2}, 0} L(u_1 \otimes \cdots \otimes u_{i-1} \otimes u_i \otimes \mathbf{k}  \otimes  \cdots \otimes u_n)\\
        = \ & L(u_1 \otimes \cdots \otimes u_{i+2} \otimes u_{i} \otimes u_{i+1} \otimes \cdots \otimes u_n)\\
        & + m_{i+1}(e_{k_{i+1}}, e_{k_{i+2}}) \delta_{m_{i+1}+m_{i+2}, 0} L(u_1 \otimes \cdots \otimes u_{i-1} \otimes \mathbf{k} \otimes u_i   \otimes  \cdots \otimes u_n)\\
        = \ & L(u_1 \otimes \cdots \otimes u_{i+2} \otimes u_{i+1} \otimes u_{i}  \otimes \cdots \otimes u_n)\\
        & + m_{i+1}(e_{k_{i+1}}, e_{k_{i+2}}) \delta_{m_{i+1}+m_{i+2}, 0} L(u_1 \otimes \cdots \otimes u_{i-1} \otimes \mathbf{k} \otimes u_i   \otimes  \cdots \otimes u_n)
    \end{align*}
    Starting with $j$ instead of $i$ results in the same expression. So $L(u_1\otimes \cdots \otimes u_n)$ is unambiguously defined with the current choice of $u_s$. For other possible choices of $u_s$, we can proceed with a similar argument to get the same conclusion. Therefore, the inductive step is proved when $i+1 = j$. 

    Therefore, $L$ is a well-defined linear map. The conclusion then follows.    
\end{proof}

\subsection{The vector space $W$, the modes and the generating function of modes}

Let $U$ be a $T(\h)$-module. We let $\hat{\h}_{\Z}^+$ acts on $U$ trivially, $\h\otimes t^0$ acts on $U$ as $\h$ acts on $U$, and $\mathbf{k}$ acts by the scalar $l\in \C$. Then $U$ is a module for the subalgebra $T(\hat{\h}_{\Z}^+) \otimes T(\h\otimes t^0) \otimes T(\C \mathbf{k})$ of $N(\hat{\h}_{\Z})$. 
Let 
$$W = N(\hat{\h}_{\Z})\otimes_{T(\hat{\h}_{\Z}^+) \otimes T(\h\otimes t^0) \otimes T(\C \mathbf{k})} U$$
be the induced module. As a vector space, $W$ is isomorphic to $T(\hat{\h}_{\Z}^-)\otimes U$. For $h\in \h, m\in \N$, we denote the action of $h\otimes t^{m}$ by $h(m)$. We refer $h(m)$ as positive modes if $m> 0$, negative modes if $m<0$, zero modes if $m=0$. We use the brace brackets $\{\cdot, \cdot\}$ to denote the anti-commutator of the modes. Clearly, for $a, b\in \h, m,n\in \N$,
\begin{align}
    \{a(m), b(-n)\} &= a(m)b(-n) + b(-n)a(m) = (a, b) \delta_{mn} \label{anticom-modes}
\end{align}
Also, it follows from Proposition \ref{PBW} that $W$ is spanned by
\begin{align}
    h_1(-m_1) \cdots h_r(-m_r)u, h_1, ..., h_r \in \h, u\in U, m_1, ..., m_r\in \N, r\in \N. \label{V-basis}
\end{align}
We assign the integer
$$m_1 + \cdots + m_r$$
as the weight of (\ref{V-basis}) and define the operator $\d$ accordingly. Clearly, for every $h\in \h$ and $n\in \Z$, the mode $h(n)$ is a homogeneous operator with weight $-n$. 

\begin{rema}
    The weight we assign on $W$ gives the canonical $\N$-grading on $W$. Note that in case $W$ is the 1-dimensional trivial $T(\h)$-module, \cite{FFR} assigns the weight $m_1 + \cdots + m_r - M/8$ that gives the conformal grading. The reader should note that conformal grading does not make sense in the current setting since a conformal element is not required to exist. 
\end{rema}

Although we did not assume any relations among positive modes on $V$, it turns out that they are all anti-commutative. More precisely, 
\begin{prop}\label{pos-mode-prop}
    For $a,b\in \h$, $m,n\in \Z_+$, 
    $$\{a(m), b(n)\} = a(m)b(n) + b(m)a(n) = 0$$
\end{prop}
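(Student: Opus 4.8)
The plan is to show that the operator $\{a(m),b(n)\} = a(m)b(n)+b(n)a(m)$ annihilates all of $W$, and the conceptual point I would emphasize is that this is a property of the \emph{module} $W$, not of the algebra $N(\hat{\h}_\Z)$: in $N(\hat{\h}_\Z)$ there is, by construction, no relation whatsoever between two positive modes. What forces the vanishing on $W$ is that the positive modes act as annihilation operators. Concretely, since $\hat{\h}_\Z^+$ acts trivially on $U$, we have $a(m)u = b(n)u = 0$ for all $u\in U$ and all $m,n\in\Z_+$, and this property propagates through the creation operators via the mixed anticommutation relation (\ref{anticom-modes}). I would run the argument by induction on the length $r$ of a spanning vector $w = h_1(-m_1)\cdots h_r(-m_r)u$ of $W$, using the identification of $W$ with $T(\hat{\h}_\Z^-)\otimes U$ from Proposition \ref{PBW}, so that the $h_i(-m_i)$ are genuine negative (creation) modes, $m_i\in\Z_+$, and $u\in U$.

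The base case $r=0$ is immediate: for $w=u\in U$ one has $\{a(m),b(n)\}u = a(m)(b(n)u)+b(n)(a(m)u)=0$ since positive modes kill $U$. For the inductive step I would write $w = h(-k)w'$ with $h=h_1$, $k=m_1\in\Z_+$, and $w'$ of length $r-1$, and commute both $a(m)$ and $b(n)$ past the leading factor $h(-k)$ using (\ref{anticom-modes}) in the form $a(m)h(-k) = -h(-k)a(m)+(a,h)\delta_{mk}$ and $b(n)h(-k)=-h(-k)b(n)+(b,h)\delta_{nk}$. Expanding $a(m)b(n)w$ produces the transported term $h(-k)\,a(m)b(n)w'$ together with two scalar contraction terms $-(a,h)\delta_{mk}\,b(n)w'$ and $+(b,h)\delta_{nk}\,a(m)w'$; expanding $b(n)a(m)w$ produces $h(-k)\,b(n)a(m)w'$ together with $-(b,h)\delta_{nk}\,a(m)w'$ and $+(a,h)\delta_{mk}\,b(n)w'$.

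The crux is the addition: the four contraction terms cancel in pairs, regardless of the precise value of the central scalar appearing in (\ref{anticom-modes}), leaving exactly
\[
\{a(m),b(n)\}\,w \;=\; h(-k)\,\{a(m),b(n)\}\,w'.
\]
The induction hypothesis gives $\{a(m),b(n)\}w'=0$, hence $\{a(m),b(n)\}w=0$, and linearity over the spanning set (\ref{V-basis}) completes the proof. I do not expect a genuine obstacle here: the computation is short, and the only thing requiring care is the sign bookkeeping that makes the contraction terms cancel. The one subtlety worth keeping in view is precisely that the anticommutativity is generated by the annihilation property on $U$ carried through the creation operators, and is in no way visible at the level of the algebra $N(\hat{\h}_\Z)$ itself.
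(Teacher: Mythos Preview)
Your proof is correct and follows essentially the same approach as the paper: induction on the length $r$ of a spanning vector, moving each positive mode past the leading creation operator via the anticommutation relation (\ref{anticom-modes}), observing that the contraction terms cancel in the sum, and reducing to the induction hypothesis. The paper presents the computation by writing out $a(m)b(n)w$ and $-b(n)a(m)w$ separately and matching terms, while you add them and exhibit the cancellation explicitly, but the content is identical.
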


\begin{proof}
    We check the action of the anti-commutator on the basis elements and show that 
    \begin{align}
        & a(m)b(n) h_1(-m_1)\cdots h_r(-m_r) u  \label{pos-mode-1}\\
        = \ &  -b(n)a(m) h_1(-m_1)\cdots h_r(-m_r) u\label{pos-mode-2}
    \end{align}
    The identity clearly holds when $r= 0$. Assume the identity holds for all smaller $r$. We compute (\ref{pos-mode-1}) as follows
    \begin{align*}
        (\ref{pos-mode-1}) = \ & a(m) (b, h_1) \delta_{n, m_1} h_2(-m_2) \cdots h_r(-m_r)u  - a(m)h_1(-m_1) b(n)  h_2(-m_2) \cdots h_r(-m_r)u\\
        = \ & a(m) (b, h_1) \delta_{n, m_1} h_2(-m_2) \cdots h_r(-m_r)u  - (a, h_1)\delta_{m,m_1} b(n)  h_2(-m_2) \cdots h_r(-m_r)u\\
        & + h_1(-m_1) a(m)b(n)  h_2(-m_2) \cdots h_r(-m_r)u\\
        (\ref{pos-mode-2})= \ & -b(n) (a,h_1)\delta_{m,m_1} h_2(-m_2)\cdots h_r(-m_r)u + b(n) h_1(-m_1) a(m) h_2(-m_2)\cdots h_r(-m_r)u \\
        = \ & -b(n) (a,h_1)\delta_{m,m_1} h_2(-m_2)\cdots h_r(-m_r)u  + (b, h_1)\delta_{n,m_1} a(m) h_2(-m_2)\cdots h_r(-m_r)u \\
        & - h_1(-m_1) a(m) b(n) h_2(-m_2)\cdots h_r(-m_r)u 
    \end{align*}
    The conclusion then follows from the induction hypothesis. 
\end{proof}

% \begin{rema}
%     \textcolor{red}{Physical remark about the anti-commutative positive modes. }
% \end{rema}

For fixed $h\in \h$, we consider the series 
$$h(x) = \sum_{n\in \Z} h(n) x^{-n-1/2}\in \End(W)[[x, x^{-1}]]$$
that is the generating functions of all modes associated with $h$. 
\begin{align*}
    h(x)^+ &= \sum_{n> 0} h(n) x^{-n-1/2}, \\
    h(x)^- &= \sum_{n> 0} h(-n) x^{n-1/2}, \\
    h(x)^0 &= h(0) x^{-1/2} 
\end{align*}
be the partial series consisting of positive modes, negative modes and the zero mode, respectively. Contrary to the notation convention in \cite{LL}, $h(x)^+ + h(x)^0$ is the singular part of $h(x)$, while $h(x)^-$ is the regular part of $h(x)$. For every $n\in\Z$, we use the notation 
$$(x^{-n-1/2})^{(m)} = \frac{1}{m!}\frac{\partial^m}{\partial x^m} x^{-n-1/2} = \binom{-n-1/2}{m}x^{-n-m-1/2}. $$
We also use the notation
$$h^{(m)}(x) = \frac{1}{m!}\frac{\partial^m}{\partial x^m} h(x) = \sum_{n\in \Z} h(n)(x^{-n-1/2})^{(m)} = \sum_{n\in \Z} \binom{-n-1/2}{m} h(n)x^{-n-m-1/2}$$
Similarly, we have
\begin{align*}
    h^{(m)}(x)^+ &= \frac{1}{m!}\frac{\partial^m}{\partial x^m} h(x)^+ = \sum_{n > 0} h(n) (x^{-n-1/2})^{(m)}= \sum_{n > 0} \binom{-n-1/2}{m} h(n)x^{-n-m-1/2}, \\
    h^{(m)}(x)^- &= \frac{1}{m!}\frac{\partial^m}{\partial x^m} h(x)^- =\sum_{n > 0} h(-n) (x^{n-1/2})^{(m)}= \sum_{n > 0} \binom{n-1/2}{m} h(n)x^{n-m-1/2}, \\
    h^{(m)}(x)^0 &= \frac{1}{m!}\frac{\partial^m}{\partial x^m} h(x)^0 = \binom{-1/2}{m} h(0) x^{-m-1/2}. 
\end{align*}

\begin{rema}
    Note that the regular-singular decomposition of $h^{(m)}(x)$ does not work compatibly with $\pm$ and $0$ superscripts.     
\end{rema}

\begin{rema}
    Note also that $h(n)$ now acts on $W$, and $h(x)$ is the generating function of the modes $h(n)$ acting on $W$. We will use $h(x)_V$ to denote the generating function of the modes on $V$ defined in Part I. We will trust the readers of the both parts not to confuse on these notations. 
\end{rema}

\begin{prop}
    Let 
    \begin{align*}
        f(x,y) &= x^{-1/2}y^{1/2}(x-y)^{-1}, 
        f_{mn}(x,y) = \frac{1}{m!n!}\frac{\partial^{m+n}}{\partial x^m \partial y^n}f(x,y), m,n\geq 0. 
    \end{align*}
    Then for $a, b\in \h$, the anticommutator of $\{a^{(m)}(x)^+, b^{(n)}(y)^-\}$ is given by
    \begin{align}
        \{a^{(m)}(x)^+, b^{(n)}(y)^-\} &= (a,b) \iota_{xy}f_{mn}(x,y). \label{anticomm-pm}
    \end{align}
    Here $\iota_{xy}$ expands negative powers of $x-y$ as a power series in $y$, i.e., 
    $$\iota_{xy}\left(\frac{1}{(x-y)^t}\right) = \sum_{i=0}^\infty \binom{-t}i x^{t-i}(-y)^i = \sum_{i=0}^\infty \binom{t+i-1}{i} x^{t-i}y^i. $$
\end{prop}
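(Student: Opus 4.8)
The plan is to reduce the identity for arbitrary $m,n$ to the base case $m=n=0$, exploiting the fact that both sides of (\ref{anticomm-pm}) arise from their $m=n=0$ versions by applying the single operator $\frac{1}{m!n!}\frac{\partial^{m+n}}{\partial x^m\partial y^n}$. First I would establish the base case by direct computation. Writing $a(x)^+=\sum_{j>0}a(j)x^{-j-1/2}$ and $b(y)^-=\sum_{k>0}b(-k)y^{k-1/2}$ and invoking the mode anticommutator (\ref{anticom-modes}), the double sum collapses along the Kronecker delta $\delta_{jk}$:
\begin{align*}
\{a(x)^+,b(y)^-\}=\sum_{j,k>0}\{a(j),b(-k)\}\,x^{-j-1/2}y^{k-1/2}=(a,b)\sum_{j>0}x^{-j-1/2}y^{j-1/2}.
\end{align*}
On the other side, expanding $f$ by $\iota_{xy}$ gives $\iota_{xy}f(x,y)=x^{-1/2}y^{1/2}\sum_{i\geq 0}x^{-1-i}y^i=\sum_{j\geq 1}x^{-j-1/2}y^{j-1/2}$ after reindexing $j=i+1$. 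The two expressions agree, which is (\ref{anticomm-pm}) for $m=n=0$.

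Next I would record the two reductions that carry the base case to general $(m,n)$. On the operator side, since $a^{(m)}(x)^+=\frac{1}{m!}\partial_x^m a(x)^+$ involves only $x$ while $b^{(n)}(y)^-=\frac{1}{n!}\partial_y^n b(y)^-$ involves only $y$, and the modes $a(j),b(-k)$ are scalars in $x,y$, the derivatives factor out of both $a^{(m)}(x)^+b^{(n)}(y)^-$ and $b^{(n)}(y)^-a^{(m)}(x)^+$ term by term, yielding
$$\{a^{(m)}(x)^+,b^{(n)}(y)^-\}=\frac{1}{m!n!}\frac{\partial^{m+n}}{\partial x^m\partial y^n}\{a(x)^+,b(y)^-\}.$$
On the function side, I would verify that $\iota_{xy}$ commutes with $\partial_x$ and $\partial_y$: differentiating the expansion $\iota_{xy}(x-y)^{-t}=\sum_{i\geq 0}\binom{t+i-1}{i}x^{-t-i}y^i$ term by term produces the same series as expanding the differentiated rational function. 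Hence $\iota_{xy}f_{mn}=\frac{1}{m!n!}\frac{\partial^{m+n}}{\partial x^m\partial y^n}\iota_{xy}f$. Applying the common operator $\frac{1}{m!n!}\partial_x^m\partial_y^n$ to the base-case identity then gives (\ref{anticomm-pm}) in general; concretely the right-hand side becomes $(a,b)\sum_{j\geq 1}\binom{-j-1/2}{m}\binom{j-1/2}{n}x^{-j-m-1/2}y^{j-n-1/2}$, which matches the term-by-term expansion of the left-hand side.

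The only step that is not pure bookkeeping is the commutation of $\iota_{xy}$ with differentiation, so I expect that to be the main (and minor) obstacle; everything else is the collapse of the double mode-sum and a reindexing. One could instead bypass the reduction and match the two sides coefficient by coefficient, but differentiating the coupled factor $(x-y)^{-1}$ in $f_{mn}$ directly is messier than differentiating the already-decoupled base-case series $\sum_{j\geq 1}x^{-j-1/2}y^{j-1/2}$, so the reduction route is the cleaner one.
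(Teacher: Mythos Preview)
Your proposal is correct and follows essentially the same approach as the paper: compute the $m=n=0$ case directly by collapsing the double mode-sum via the Kronecker delta, then obtain the general case by applying $\frac{1}{m!n!}\partial_x^m\partial_y^n$ to both sides. You spell out the commutation of $\iota_{xy}$ with differentiation more carefully than the paper does, but the argument is the same.
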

\begin{proof}
    We first work out the case $m=n=0$. In this case, we compute the anti-commutator as follows:
    \begin{align*}
        \{a(x)^+, b(y)^-\} & = \sum_{i, j > 0} \{a(i), b(-j)\}x^{-i-1/2}y^{j-1/2} =\sum_{i, j > 0} (a, b)\delta_{ij}x^{-i-1/2}y^{j-1/2}\\
        & = \sum_{i> 0} (a, b)x^{-i-1/2}y^{i-1/2} = x^{-1/2}y^{1/2}(a, b) \sum_{i\geq 1} x^{-i}y^{i-1}\\
        & =  x^{-1/2}y^{1/2}(a, b) \iota_{xy}(x-y)^{-1} = (a, b) \iota_{xy}f(x,y). 
    \end{align*}
    The conclusion for general $m, n$ follows from taking partial derivatives. 
\end{proof}

\begin{rema}
    The function $f(x,y)$ here is an algebraic function that is different from the rational function $f(x,y)=(x-y)^{-1}$ in Part II. We will trust the readers of the both parts not to confuse on these notations. 
\end{rema}

\subsection{Normal ordering and the na\"ive vertex operator}
\begin{defn}\label{nord-defn-modes}
Let $h_1, ..., h_r\in \h$, $m_1, ..., m_r\in \Z$. We define the \textit{normal ordering} of the product $h_1(m_1), ..., h_r(m_r)$ of the modes by
\begin{align*}
    & \nord h_1(m_1)\cdots h_r(m_r)\nord  \\
    = \ & (-1)^\sigma h_{\sigma(1)}(m_{\sigma(1)})\cdots h_{\sigma(\mu)}(m_{\sigma(\mu)})\\
    & \qquad  \cdot h_{\sigma(\mu+1))}(m_{\sigma(\mu+1)}) \cdots h_{\sigma(\nu)}(m_{\sigma(\nu)}) \\
    & \qquad  \cdot \nord h_{\sigma(\nu+1)}(0) \cdots h_{\sigma(r)}(0)\nord 
\end{align*}
where $\sigma$ is the unique permutation of $\{1, ..., r\}$ such that
\begin{eqnarray}
    m_{\sigma(1)} < 0, ..., m_{\sigma(\mu)} < 0, \ & m_{\sigma(\mu+1)} > 0, ..., m_{\sigma(r)} > 0, \ & m_{\sigma(\nu+1)} = 0, ..., m_{\sigma(r)} = 0\\
    \sigma(1)< \cdots < \sigma(\mu), \ & \sigma(\mu+1)<\cdots < \sigma(\nu), \ & \sigma(\nu+1) < \cdots < \sigma(r). \label{shuffle-defn}
\end{eqnarray}
$(-1)^\sigma$ is the parity of the permutation $\sigma$, and the normal ordering of the zero modes are defined recursively by 
\begin{align}
    \nord a_1(0) \cdots a_r(0) \nord = a_1(0) \nord a_2(0) \cdots a_r(0) \nord + \sum_{i=2}^r \frac{(-1)^{i+1}}2 (a_1, a_i) \nord a_2(0) \cdots \reallywidehat{a_i(0)} \cdots a_r(0)\nord. \label{nord-zero-modes}
\end{align}
For example:
\begin{align*}
    \nord a_1(0) a_2(0)\nord &= a_1(0) a_2(0) - \frac 1 2 (a_1, a_2)\\
    \nord a_1(0) a_2(0) a_3(0) \nord &= a_1(0) a_2(0) a_3(0) - \frac 1 2 (a_2, a_3) a_1(0) + \frac 1 2 (a_1, a_3) a_2(0) - \frac 1 2 (a_1, a_2) a_3(0) 
\end{align*}
Basically speaking, the normal ordering of $h_1(m_1), ..., h_r(m_r)$ arranges all the negative modes to the left, then followed by all the positive modes in the middle, then followed by all the zero modes to the right. the process does not change the ordering among positive modes and that among negative modes. For example, if $m_1, m_2, m_3, m_5, m_6, m_8 \in \Z_+$, then 
\begin{align*}
    & \nord h_1(-m_1)h_2(m_2)h_3(m_3)h_4(0)h_5(-m_5)h_6(-m_6)h_7(0)h_8(m_8)\nord\\
    = \ &  h_1(-m_1)h_5(-m_5)h_6(-m_6) h_2(m_2)h_3(m_3)h_8(m_8)\nord h_4(0) h_7(0)\nord\\
    = \ &  h_1(-m_1)h_5(-m_5)h_6(-m_6) h_2(m_2)h_3(m_3)h_8(m_8) h_4(0) h_7(0)\\
    &  - \frac 1 2 (h_4, h_7) h_1(-m_1)h_5(-m_5)h_6(-m_6) h_2(m_2)h_3(m_3)h_8(m_8).
\end{align*}
\end{defn}

\begin{rema}
    Permutations satisfying (\ref{shuffle-defn}) is called a 3-shuffle. For every fixed $\mu, \nu \in \{0, ..., r\}$, the collection of permutations satisfying (\ref{shuffle-defn}) is denoted by $J_{\mu,\nu}(1, ..., r)$. We would also identify a 3-shuffle in $J_{\mu,\nu}(1, ... r)$ with three increasing sequences 
    $$1\leq p_1 < \cdots < p_\mu \leq r, 1 \leq p_{i_1}^c < \cdots < p_{i_\nu}^c\leq r, 1\leq p_{i_1^c}^c < \cdots < p_{i_{r-\mu-\nu}^c}^c\leq r$$
    in $\{1, ..., r\}$, with 
    $$\{p_1, ..., p_\mu\} \amalg \{p_1^c, ..., p_{r-\mu}^c\} = \{1, ..., r\}, \{i_1, ..., i_\nu\}\amalg \{i_1^c, ..., i_{r-\mu-\nu}^c\} = \{1, ..., r-\mu\}. $$
    In other words, we are understanding a 3-shuffle as the iterate of two 2-shuffles 
    $$1\leq p_1 < \cdots < p_\mu \leq r, 1\leq p_1^c < \cdots < p_{r-\mu}^c \leq r$$ 
    and 
    $$1\leq i_1 < \cdots < i_\nu \leq r-\mu, 1\leq i_1^c < \cdots < i_{r-\mu-\nu}^c \leq r-\mu. $$ 
    When $\mu =0$,  $J_{0, \nu}(1, ..., r)$ is the set of 2-shuffles $J_{\nu}(1, ..., r)$ considered in \cite{FQ}. Similar comments applies to the case $\nu= 0$. If $\mu =\nu = 0$, or $\mu = r$, or $\nu = r$, then only one increasing sequence is nonempty. In this case, $J_{\mu, \nu}(1, ..., r)$ consists of only the identity permutation $(1)$. Although it is possible to obtain explicit formulas for the sequence $p^c$ and $i^c$, we shall do so only when we need them. 
\end{rema}

\begin{rema}
    If the Clifford relation exists among zero modes, i.e., 
    $$a_1(0) a_2(0) + a_2(0)a_1(0) = (a_1, a_2) 1,$$
    then from the recursion, we may obtain
    $$\nord a_1(0) \cdots a_r(0) \nord = \frac 1 {r!}\sum_{\sigma\in Sym\{1, ..., r\}} (-1)^\sigma a_{\sigma(1)}(0) \cdots a_{\sigma(r)}(0),$$
    as in \cite{FFR}. In the module $W$ discussed in this paper, we require no relations among the zero modes at all. We expect it to be useful in the study of interacting zero modes. 
\end{rema}

\begin{prop}\label{zero-mode-prop}
    The normal ordering of zero modes admits a recursion from the right: for $a_1, ..., a_r\in \h$, 
    $$\nord a_1(0) \cdots a_r(0)\nord = \nord a_1(0) \cdots a_{r-1}(0)\nord a_r(0) + \sum_{i=1}^{r-1} \frac {(-1)^{i+r}}{2} (a_i, a_r) \nord a_1(0) \cdots \reallywidehat{a_i(0)} \cdots a_{r-1}(0)\nord $$
\end{prop}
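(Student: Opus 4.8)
The plan is to induct on $r$, taking as inductive hypothesis the statement of the Proposition itself for all sequences of length strictly less than $r$, and using the defining left-recursion (\ref{nord-zero-modes}) as the engine. The base cases are immediate: for $r=1$ both sides equal $a_1(0)$, and for $r=2$ both sides equal $a_1(0)a_2(0)-\frac12(a_1,a_2)$ by (\ref{nord-zero-modes}).

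For the inductive step, first I would apply (\ref{nord-zero-modes}) once to peel $a_1(0)$ off the left,
$$\nord a_1(0)\cdots a_r(0)\nord = a_1(0)\nord a_2(0)\cdots a_r(0)\nord + \sum_{i=2}^r \frac{(-1)^{i+1}}{2}(a_1,a_i)\nord a_2(0)\cdots\reallywidehat{a_i(0)}\cdots a_r(0)\nord.$$
Every normal ordering on the right now has length at most $r-1$ and, except in the $i=r$ term, still ends in $a_r(0)$; to each of these I would apply the inductive hypothesis to extract $a_r(0)$ from the right. Separately, I would expand the claimed right-hand side of the Proposition: its leading term $\nord a_1(0)\cdots a_{r-1}(0)\nord a_r(0)$ and each summand $\nord a_1(0)\cdots\reallywidehat{a_i(0)}\cdots a_{r-1}(0)\nord$ can likewise be reduced by peeling $a_1(0)$ off the left via (\ref{nord-zero-modes}). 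The two expansions are then to be compared term by term.

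Several families of terms match cleanly: the fully uncontracted leading term $a_1(0)\nord a_2(0)\cdots a_{r-1}(0)\nord a_r(0)$ is common to both; the $i=r$ term of the left expansion coincides with the $i=1$ term of the claimed right-hand side, since the signs $(-1)^{r+1}$ and $(-1)^{1+r}$ agree; and the single contractions $(a_1,a_i)$ against a middle index agree after $a_r(0)$ is peeled off. The subtlety, and what I expect to be the main obstacle, is that the left expansion produces terms of the shape $a_1(0)\nord\cdots\nord$ with $a_1(0)$ \emph{outside} the normal ordering, whereas the right-hand side produces $\nord a_1(0)\cdots\nord$ with $a_1(0)$ \emph{inside}; reconciling the two via (\ref{nord-zero-modes}) forces in the double-contraction terms, in which $a_1(0)$ is contracted against some $a_i(0)$ while $a_r(0)$ is simultaneously contracted against some $a_j(0)$. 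Because deleting an index shifts the positions of the surviving modes, the exponents governing the signs $(-1)^{\cdots}$ must be recomputed carefully, splitting into the cases $j<i$ and $j>i$, and one must check that contracting $\{1,i\}$ first and then $\{j,r\}$ yields the same sign as contracting $\{j,r\}$ first and then $\{1,i\}$. This is exactly the ``independence of the order of reduction'' verification that underlies the well-definedness argument in the proof of Proposition \ref{PBW} (the $i+1<j$ versus $i+1=j$ case analysis there). An alternative, more conceptual route would be to first establish a closed Wick-type formula writing $\nord a_1(0)\cdots a_r(0)\nord$ as a signed sum over partial matchings of $\{1,\dots,r\}$, each matched pair $\{i,j\}$ with $i<j$ contributing $-\frac12(a_i,a_j)$ with sign $(-1)^{j-i-1}$ and the surviving modes in increasing order; both the defining left-recursion and the desired right-recursion then follow symmetrically, at the cost of pinning down the matching sign precisely.
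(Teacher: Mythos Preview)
Your proposal is correct and follows essentially the same approach as the paper: peel $a_1(0)$ off the left via (\ref{nord-zero-modes}), apply the inductive hypothesis to extract $a_r(0)$ from each shorter normal ordering, and then recombine using (\ref{nord-zero-modes}) backwards. The paper carries out exactly this plan, producing six labeled families of terms (your ``fully uncontracted'', ``single contraction with $a_1$'', ``single contraction with $a_r$'', two double-contraction families split by $j<i$ versus $j>i$, and the $i=r$ term), and then reassembles them into the claimed right-hand side precisely by running the left-recursion in reverse---which is just your ``expand both sides and compare'' stated the other way around.
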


\begin{proof}
    For $r=2$ the conclusion holds. Assume the conclusion holds for smaller $r$. By definition:
\begin{align}
    & \nord a_1(0)\cdots a_r(0)\nord \nonumber \\
    =\ & a_1(0) \nord a_2(0)\cdots a_r(0)\nord \nonumber\\
    & + \sum_{i=2}^r \frac {(-1)^{i+1}}2 (a_1, a_i) \nord a_2(0) \cdots \reallywidehat{a_i(0)}\cdots a_r(0)\nord \nonumber \\
    =\ &  a_1(0)\nord a_2(0) \cdots a_{r-1}(0) \nord a_r(0) \label{nord-right-1}\\
    & +  \sum_{j=2}^{r-1}\frac{(-1)^{j-1+r-1}}{2}(a_j, a_r) a_1(0) \nord a_2(0)\cdots \reallywidehat{a_j(0)} \cdots a_{r-1}(0)\nord \label{nord-right-2}\\
    & + \sum_{i=2}^{r-1} \frac{(-1)^{i+1}}{2}(a_1, a_i)\nord a_2(0) \cdots \reallywidehat{a_i(0)}\cdots  a_{r-1}(0)\nord a_r(0) \label{nord-right-3}\\
    & + \sum_{i=2}^{r-1}\sum_{j=2}^{i-1} \frac{(-1)^{i+1}}{2} (a_1, a_i) \frac{(-1)^{j-1+r-2}}{2}(a_j, a_r) \nord a_2(0) \cdots \reallywidehat{a_j(0)} \cdots \reallywidehat{a_i(0)} \cdots a_{r-1}(0)\nord \label{nord-right-4}\\
    & + \sum_{i=2}^{r-1}\sum_{j=i+1}^{r-1} \frac{(-1)^{i+1}}{2} (a_1, a_i) \frac{(-1)^{j-2+r-2}}{2}(a_j, a_r)  \nord a_2(0) \cdots \reallywidehat{a_i(0)} \cdots \reallywidehat{a_j(0)} \cdots a_{r-1}(0)\nord \label{nord-right-5}\\
    & + \frac{(-1)^{r+1}} 2 (a_1, a_r) \nord a_2(0) \cdots a_{r-1}(0)\nord.\label{nord-right-6}
\end{align}
Note that (\ref{nord-right-1}) and (\ref{nord-right-3}) combine into
$$\nord a_1(0) \cdots a_{r-1}(0)\nord a_r(0). $$
Note that (\ref{nord-right-2}), (\ref{nord-right-4}) and (\ref{nord-right-6}) line combine into
\begin{align*}
     \sum_{j=2}^{r-1} \frac {(-1)^{j+r}}{2}(a_j, a_r) {\bigg(} & a_1(0) \nord a_2(0)\cdots \reallywidehat{a_j(0)} \cdots a_{r-1}(0) \nord  \\ & + \sum_{i=2}^{j-1} \frac{(-1)^{i+1}}{2} (a_1, a_i) \nord a_2(0) \cdots \reallywidehat{a_i(0)} \cdots \reallywidehat{a_j(0)} \cdots a_{r-1}(0) \nord \\
     & + \sum_{i=j+1}^{r-1} \frac{(-1)^{i-1+1}}2 (a_1, a_i)\nord a_2(0) \cdots \reallywidehat{a_j(0)} \cdots \reallywidehat{a_i(0)}\cdots a_{r-1}(0)\nord {\bigg)}, 
\end{align*}
which is precisely
$$\sum_{j=2}^{r-1} \frac{(-1)^{j+r}} 2 (a_j, a_r) \nord a_1(0) \cdots \reallywidehat{a_j(0)} \cdots a_{r-1}(0)\nord. $$
Then (\ref{nord-right-6}) can be included, giving the conclusion.
\end{proof}

\begin{defn}\label{nord-defn-series}
    The definition of the normal ordering extends naturally to all linear combinations of products of modes. With the same philosophy, we define the \textit{normal ordering} of the product series $h_1^{(m_1)}(x_1)\cdots  h_r^{(m_r)}(x_1)$ by components, i.e., 
    \begin{align*}
        & \nord h_1^{(m_1)}(x_1) \cdots h_r^{(m_r)}(x_r)\nord \\
        = \ & \sum_{n_1, ..., n_r\in \Z} \nord h_1(n_1)\cdots h_r(n_r)\nord (x_1^{(-n_1-1/2})^{(m_1)} \cdots (x_r^{(-n_r-1/2})^{(m_r)} 
    \end{align*}
\end{defn}

\begin{rema}
Clearly, the normal ordering operation is ``multilinear'', in the sense that for $c_1, c_2\in \C$
\begin{align*}
    & \nord h_1^{(m_1)}(x_1) \cdots \left(c_1 h_{i_1}^{(m_{i_1})}(x_i) + c_2 h_{i_2}^{(m_{i_2})}(x_i)\right) \cdots  h_r^{(m_r)}(x_r)\nord    \\
    = \ & c_1 \nord h_1^{(m_1)}(x_1) \cdots h_{i_1}^{(m_{i_1})}(x_i) \cdots  h_r^{(m_r)}(x_r)\nord  + c_2 \nord h_1^{(m_1)}(x_1) \cdots h_{i_2}^{(m_{i_2})}(x_i) \cdots  h_r^{(m_r)}(x_r)\nord  .
\end{align*}
    
\end{rema}

\begin{rema}\label{zero-suffices}
    Clearly, the normal ordering operation commutes with partial differentiation: for every $i=1, ..., r$
$$\frac{\partial}{\partial x_i}\nord h_1^{(m_1)}(x_1) \cdots h_r^{(m_r)}(x_r)\nord = \nord\frac{\partial}{\partial x_i}h_1^{(m_1)}(x_1) \cdots h_r^{(m_r)}(x_r)\nord.  $$
This property simplifies the proofs of many identities in this paper. With this property, it suffices to show those identities in the case when $m_1 = \cdots = m_r = 0$. 
\end{rema}

\begin{defn}\label{na\"ive-vo-defn}
    We now define the na\"ive vertex operator 
    $$\bar Y_W: V\otimes W \to W((x))$$
    on the basis elements of $V$: For $\one \in V$, we set 
    $$\bar Y_W(\one, x) = 1_W. $$
    For $h_1, ..., h_r\in \h, m_1, ..., m_r\in \N$, 
    \begin{align}
        & \bar Y_W (h_1(-m_1-1/2)\cdots h_r(-m_r-1/2)\one, x) \nonumber\\
        = \ & \nord h_1^{(m_1)}(x)\cdots h_r^{(m_r)}(x)\nord\nonumber\\
        = \ & \sum_{\mu = 0}^r\sum_{\nu=0}^{r-\mu} \sum_{\sigma\in J_{\mu, \nu}(1, ..., r)} h_{\sigma(1)}^{(m_{\sigma(1)})}(x)^- \cdots h_{\sigma(\mu)}^{(m_{\sigma(\mu)})}(x)^- \nonumber \\
        & \qquad \qquad \qquad \qquad \ \cdot h_{\sigma(\mu+1)}^{(m_{\sigma(\mu+1)})}(x)^+ \cdots h_{\sigma(\nu)}^{(m_{\sigma(\nu)})}(x)^+ \nonumber \\
        & \qquad \qquad \qquad \qquad \ \cdot \nord h_{\sigma(\nu+1)}^{(m_{\sigma(\nu+1)})}(x)^0 \cdots h_{\sigma(r)}^{(m_{\sigma(r)})}(x)^0\nord  
    \end{align}
    The vertex operator extends to every element $v\in V$. It is clear that for every fixed element $w\in V$, $\bar Y_W(v, x)w\in W((x^{1/2}))$. 
\end{defn}

\section{Products and iterates of na\"ive vertex operators}

In this section we compute the products and iterates of na\"ive vertex operator and show the na\"ive vertex operators do not satisfy weak associativity.

\subsection{Some lemmas regarding shuffles}

Unlike the situation in \cite{FQ}, we do not have any reasonable recurrence relation of normal-ordered products in terms of positive, negative, or zero modes. We will have to prove most of the identities by manipulating shuffles. The following lemma will be useful for the process. 

\begin{lemma}\label{Comb-Id-Lemma}
For every fixed $\mu$ and $\nu$, any algebraic expression $\Phi$ with $r-1$ indices and any algebraic expression $\Psi$ with 1 index, 
\begin{align}
    & \sum_{\sigma\in J_{\mu,\nu}(1, ..., r)} \sum_{i=1}^\mu (-1)^{i-1}(-1)^\sigma \Phi(\sigma(1), ..., \reallywidehat{\sigma(i)}, ...,
    \sigma(\mu), \sigma(\mu+1), ..., \sigma(r))\Psi(\sigma(i)) \nonumber\\
    & = \sum_{j=1}^r (-1)^{j-1} \Psi(j) \sum_{\tau \in J_{\mu-1, \nu}(1,...,\widehat{j},...,r)} (-1)^\tau \Phi(\tau(1), ..., \reallywidehat{\tau(j)}, ..., \tau(r)) \label{Comb-Id-1}\\
    & \sum_{\sigma\in J_{\mu, \nu}(1, ..., r)} \sum_{i=1}^{\nu} (-1)^{\mu+ i-1}(-1)^\sigma \Phi(\sigma(1), ..., \sigma(\mu), \sigma(\mu+1), ..., \reallywidehat{\sigma(\mu+i)}, ...,
    \sigma(r))\Psi(\sigma(i)) \nonumber\\
    & = \sum_{j=1}^r (-1)^{j-1} \Psi(j) \sum_{\tau \in J_{\mu, \nu-1}(1,...,\widehat{j},...,r)} (-1)^\tau \Phi(\tau(1), ..., \reallywidehat{\tau(j)}, ..., \tau(r)) \label{Comb-Id-2}\\
   & \sum_{\sigma\in J_{\mu, \nu}(1, ..., r)} \sum_{i=1}^{r-\mu-\nu} (-1)^{\mu+\nu+i-1}(-1)^\sigma \Phi(\sigma(1), ..., 
    \sigma(\mu+\nu), \sigma(\mu+\nu+1), ..., \reallywidehat{\sigma(\mu+\nu+i)}, ...,\sigma(r))\Psi(\sigma(i)) \nonumber\\
    & = \sum_{j=1}^r (-1)^{j-1} \Psi(j) \sum_{\tau \in J_{\mu, \nu}(1,...,\widehat{j},...,r)} (-1)^\tau \Phi(\tau(1),...,  \reallywidehat{\tau(j)}, ..., \tau(r)) \label{Comb-Id-3}
\end{align}
\end{lemma}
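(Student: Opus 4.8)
The plan is to prove all three identities by one and the same bijective argument, the three cases differing only in which block the extraction is performed and in the parity bookkeeping. In each identity the left-hand side is a sum over pairs $(\sigma,i)$, where $\sigma\in J_{\mu,\nu}(1,\ldots,r)$ and $i$ labels a position inside one of the three increasing blocks (positions $1,\ldots,\mu$; $\mu+1,\ldots,\mu+\nu$; or $\mu+\nu+1,\ldots,r$), while the right-hand side is a sum over pairs $(j,\tau)$, where $j\in\{1,\ldots,r\}$ is the value handed to $\Psi$ and $\tau$ is a shuffle of the remaining $r-1$ symbols with one block shortened. First I would set up the correspondence $(\sigma,i)\mapsto(j,\tau)$ by taking $j=\sigma(\text{extracted position})$ and letting $\tau$ be the word obtained from $\sigma(1)\cdots\sigma(r)$ by deleting that single entry; reading the surviving $r-1$ entries in order gives a shuffle of $\{1,\ldots,r\}\setminus\{j\}$ lying in $J_{\mu-1,\nu}$, $J_{\mu,\nu-1}$, or $J_{\mu,\nu}$ respectively. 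Because the extracted value must occupy the unique slot that keeps its block increasing, the inverse map (reinsert $j$ into the shortened block at its sorted position, which also recovers $i$) is well-defined, so the correspondence is a bijection. Under it the surviving entries, and hence the argument list fed to $\Phi$, agree on both sides, so the entire content of the lemma reduces to comparing signs.

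The heart of the proof is the term-by-term sign identity $(-1)^{(\text{positional sign})}(-1)^\sigma=(-1)^{j-1}(-1)^\tau$. I would establish it by counting the inversions of the word $\sigma(1)\cdots\sigma(r)$ that involve the extracted position. The decisive simplification is the increasing-block structure: within the block holding the extracted entry, every earlier entry is smaller than $j$ and every later entry is larger, so positions inside that block contribute \emph{no} inversion with the extracted one. Consequently the inversions destroyed by the deletion come only from the flanking blocks, and they can all be tallied through the single bookkeeping identity that the total number of symbols below $j$ equals $j-1$. Since relabeling the surviving symbols to $\{1,\ldots,r-1\}$ by the order isomorphism preserves every remaining inversion, $(-1)^\tau$ is exactly the sign of the relabeled shuffle, and the deleted inversion count is precisely the discrepancy $(-1)^\sigma/(-1)^\tau$.

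Carrying out the count: extracting from the first block destroys $j-i$ inversions, so $(-1)^{i-1}(-1)^\sigma=(-1)^{(i-1)+(j-i)}(-1)^\tau=(-1)^{j-1}(-1)^\tau$; extracting from the second block destroys $\mu+j-i-2a_{<}$ inversions, where $a_{<}$ counts first-block symbols below $j$, and the even term $2a_{<}$ drops out modulo $2$, giving $(-1)^{\mu+i-1}(-1)^\sigma=(-1)^{j-1}(-1)^\tau$; and extracting from the third block destroys $\mu+\nu+i-j$ inversions, giving $(-1)^{\mu+\nu+i-1}(-1)^\sigma=(-1)^{j-1}(-1)^\tau$. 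In each case the positional prefactor combines with the deleted-inversion parity to leave exactly $(-1)^{j-1}$, so summing the matched terms over the bijection produces the three identities. The main obstacle I anticipate is purely the parity bookkeeping in the second and third identities, where the removed inversions are split between two flanking blocks; the device that keeps it manageable is to express every count through ``number of symbols below $j$'', so that the block lengths $\mu$ and $\nu$ enter explicitly while the residual dependence on how the small symbols are distributed is always an even number that cancels. Once the sign identity is secured for all three block positions, nothing further is required, as the $\Phi$-arguments and the bijection have already been matched.
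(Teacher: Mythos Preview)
Your proof is correct and rests on the same bijection $(\sigma,i)\leftrightarrow(j,\tau)$ as the paper's. The difference lies in how the sign identity is verified. The paper, for \eqref{Comb-Id-1}, builds an auxiliary permutation $\sigma_0$ from $\tau$ by inserting $j$ at position $j$, then counts the $j-i$ transpositions needed to slide $j$ to position $i$; for \eqref{Comb-Id-2} and \eqref{Comb-Id-3} it declines to give details and instead indicates a case analysis over the possible orderings of $i,j,\mu,\mu+i$ (up to 15 cases) and of $i,j,\mu,\nu,\nu+i$ (up to 28 cases). Your inversion-counting argument is more uniform: it treats all three identities on the same footing, and the device of expressing every inversion count through ``number of symbols below $j$'' makes the even residual $2a_{<}$ visibly cancel, so no case analysis is needed. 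This is a genuine expository improvement over the paper's route, at no cost in rigor.
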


\begin{proof}
We first focus on (\ref{Comb-Id-1}). The key is to view $\sigma\in J_{\mu, \nu}(1, ..., r)$ as three sequences of numbers $\{1, ..., r\}$. For each fixed $i$, we set $j=\sigma(i)$. Removing $\sigma(i)$ from the sequence amounts to removing $j$ from the sequence. What remains is a sequence that does not include $j$. More precisely, the left-hand-side of (\ref{Comb-Id-1}) can be written as 
\begin{align*}
    & \sum_{\sigma\in J_{\mu, \nu}(1, ..., r)} \sum_{i=1}^\mu  (-1)^{i-1}(-1)^\sigma\Phi(\sigma(1), ..., \reallywidehat{\sigma(i)}, ...,
    \sigma(\mu), \sigma(\mu+1), ..., \sigma(r))\Psi(\sigma(i)) \nonumber\\
    & = \sum_{\sigma\in J_{\mu, \nu}(1, ..., r)}  (-1)^{1-1}(-1)^\sigma\Phi(\reallywidehat{\sigma(1)}, \sigma(2), ...,
    \sigma(\mu), \sigma(\mu+1), ..., \sigma(r))\Psi(\sigma(1))\nonumber\\
    &+\cdots +\sum_{\sigma\in J_{\mu, \nu}(1, ..., r)}  (-1)^{i-1}(-1)^\sigma\Phi(\sigma(1), ..., \reallywidehat{\sigma(i)}, ...,
    \sigma(\mu), \sigma(\mu+1), ..., \sigma(r))\Psi(\sigma(i))\nonumber\\
    &+\cdots +\sum_{\sigma\in J_{\mu, \nu}(1, ..., r)}  (-1)^{\mu-1}(-1)^\sigma\Phi(\sigma(1), ..., \sigma(\mu-1), \reallywidehat{\sigma(\mu)}, \sigma(\mu+1), ..., \sigma(r))\Psi(\sigma(\mu))\nonumber\\
    & = \sum_{j=1}^r\sum_{\sigma\in J_{\mu, \nu}(1, ..., r), \sigma(1)=j}  (-1)^{1-1}(-1)^\sigma\Phi(\widehat{j}, \sigma(2), ...,
    \sigma(\mu), \sigma(\mu+1), ..., \sigma(r))\Psi(j)\nonumber\\
    &+\cdots +\sum_{j=1}^r\sum_{\sigma\in J_{\mu, \nu}(1, ..., r), \sigma(i)=j}  (-1)^{i-1}(-1)^\sigma\Phi(\sigma(1), ..., \widehat{j}, ...,
    \sigma(\mu), \sigma(\mu+1), ..., \sigma(r))\Psi(j)\nonumber\\
    &+\cdots +\sum_{j=1}^r\sum_{\sigma\in J_{\mu, \nu}(1, ..., r), \sigma(\mu)=j}  (-1)^{\mu-1}(-1)^\sigma\Phi(\sigma(1), ..., \sigma(\mu-1), \widehat{j}, \sigma(\mu+1), ..., \sigma(r))\Psi(j) 
\end{align*}
Fix $i=1,...,\mu$ and $\sigma\in J_{\mu, \nu}(1, ..., r)$ with $\sigma(i)=j$. We regard $\sigma$ as three increasing sequences of numbers in $\{1, ..., r\}$ respectively of size $\mu, \nu$ and $r-\mu-\nu$, with $j$ sitting in the $i$-th position the first sequence. Removing $j$ results in three increasing sequences of numbers in $\{1, ..., \widehat{j}, ..., r\}$ respectively of sizes $\mu-1, \nu$ and $r-1-\mu-\nu$. The number at the $(i-1)$-th position is strictly less than $j$, while that at $i$-th position is strictly larger than $j$. Let $\tau$ be the corresponding permutation in $J_{\mu-1, \nu}(1, ..., \widehat{j}, ..., r)$. Then in case $i<j$, we know 
\begin{align*}
    & \tau(1)=\sigma(1), ..., \tau(i-1)=\sigma(i-1)< j < \tau(i)= \sigma(i+1), \\
    & \tau(i+1)=\sigma(i+2), ..., \tau(j-1)=\sigma(j), \tau(j+1)=\sigma(j+1), ..., \tau(r)=\sigma(r).
\end{align*} 
In case $i = j$, we know 
\begin{align*}
    & \tau(1)=\sigma(1), ..., \tau(i-1)=\sigma(i-1) \\   & \tau(i+1)=\sigma(i+1), ..., \tau(r)=\sigma(r).
\end{align*} 
Notice that $i > j$ is impossible. 
To understand the relation of $(-1)^\tau$ and $(-1)^\sigma$, we note that from $\tau$ one can recover $\sigma$ by first defining
$$\sigma_0(k) = \left\{\begin{array}{ll}
\tau(k) & k\neq j\\
j & k = j
\end{array}\right.,$$
then performing transpositions to move $j$ consecutively to the $i$-th position. It is clear from this procedure that 
$$(-1)^\tau = (-1)^{\sigma_0} = (-1)^{\sigma}(-1)^{j-i} \Rightarrow (-1)^\sigma(-1)^{i-1} = (-1)^\tau (-1)^{j-1}. $$
Therefore, the left-hand-side of  (\ref{Comb-Id-1}) can be rewritten as
\begin{align*}
    & \sum_{j=1}^r\sum_{\tau\in J_{\mu-1, \nu}(1, ..., \widehat{j},..., r), \tau(1)>j} (-1)^{\tau}(-1)^{j-1}\Phi(\widehat{j}, \tau(1), ...,
    \tau(\mu), \tau(\mu+1), ..., \tau(r))\Psi(j))\\
    &+\cdots +\sum_{j=1}^r\sum_{\tau\in J_{\mu-1, \nu}(1, ..., \widehat{j},..., r), \tau(i-1)<j<\tau(i)}  (-1)^{\tau}(-1)^{j-1} \Phi(\tau(1), ..., \widehat{j}, ...,
    \tau(\mu), \tau(\mu+1), ..., \tau(r))\Psi(j)\\
    &+\cdots +\sum_{j=1}^r\sum_{\tau\in J_{\mu-1, \nu}(1, ..., \widehat{j},..., r), \tau(\mu)<j}  (-1)^{\tau}(-1)^{j-1}\Phi(\tau(1), ..., \tau(\mu-1), \widehat{j}, \tau(\mu+1), ..., \tau(r))\Psi(j)\\
    &= \sum_{j=1}^r(-1)^{j-1}\Psi(j))\sum_{\tau\in J_{\mu-1, \nu}(1, ..., \widehat{j},..., r)}  (-1)^{\tau} \Phi(\tau(1), ...,
    \tau(\mu), \tau(\mu+1), ..., \tau(r))
\end{align*}
The arguments for (\ref{Comb-Id-2}) and (\ref{Comb-Id-3}) are very similar, though the parity analysis is more complicated. To obtain the identity $(-1)^\sigma (-1)^{\mu+i-1} = (-1)^{j-1}(-1)^{\tau}$ in the context of (\ref{Comb-Id-2}), one has to consider all possible arrangements of the positions of $i, j, \mu$ and $\mu+i$ (there are at most $\binom{5}2+5=15$ cases). To obtain the identity $(-1)^\sigma(-1)^{\mu+\nu+i-1}= (-1)^{j-1}(-1)^\tau$ in the context of (\ref{Comb-Id-3}), one has to consider all possible arrangements of the positions of $i, j, \mu, \nu$ and $\nu+i$ (there are at most $\binom{7}2+7 = 28$ cases). We shall not repeat the details here. 
\end{proof}

\begin{rema}\label{2-shuffle-summation}
For two shuffles similar results hold. 
\begin{align}
    & \sum_{\sigma\in J_\mu(1, ..., r)} \sum_{i=1}^\mu (-1)^{i-1}(-1)^\sigma \Phi(\sigma(1), ..., \reallywidehat{\sigma(i)}, ..., \sigma(\mu), \sigma(\mu+1), ..., \sigma(r))\Psi(\sigma(i))
    \nonumber\\
    & = \sum_{j=1}^{r} (-1)^{j-1}\Psi(j) \sum_{\tau\in J_{\mu-1}(1, ..., \widehat{j}, ..., r)} (-1)^\tau \Phi(\tau(1), ..., \reallywidehat{\tau(j)}, ..., \tau(\mu-1), \tau(\mu), ..., \tau(r)). \label{Comb-Id-4}\\
    & \sum_{\sigma\in J_\mu(1, ..., r)} \sum_{i=1}^\mu (-1)^{\mu+i-1}(-1)^\sigma \Phi(\sigma(1), ..., \reallywidehat{\sigma(i)}, ..., \sigma(\mu), \sigma(\mu+1), ..., \sigma(r))\Psi(\sigma(i))
    \nonumber\\
    & = \sum_{j=1}^{r} (-1)^{j-1}\Psi(j) \sum_{\tau\in J_\mu(1, ..., \widehat{j}, ..., r)} (-1)^\tau \Phi(\tau(1), ..., \reallywidehat{\tau(j)}, ..., \tau(\mu-1), \tau(\mu), ..., \tau(r)). \label{Comb-Id-5}
\end{align}
These two formulas will play the central role in proving weak associativity of vertex operators. 
\end{rema}

% \begin{rema}\label{Comb-Id-transfer}
%     \textcolor{red}{All these formula are equal. We may transfer them. (This needs to be checked. }
% \end{rema}

\begin{lemma}\label{2-shuffle-lemma}
    Let $\sigma\in J_\mu(1, ..., r)$ be a 2-shuffle. Let 
    $$p_1 = \sigma(1), ..., p_\mu = \sigma(\mu).$$
    Then $\sigma$ can be expressed as a product of cycles
    $$\sigma = (p_1, p_1-1, ..., 2, 1) (p_2, p_2-1,..., 3, 2)\cdots (p_\mu, p_{\mu-1}, ..., \mu+1, \mu)$$
    In particular, we have 
    $$(-1)^\sigma = (-1)^{p_1-1+p_2-2+\cdots + p_\mu-\mu} = (-1)^{p_1+\cdots + p_\mu} (-1)^{\frac{\mu(\mu+1)}2}. $$
\end{lemma}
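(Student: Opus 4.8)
The plan is to establish the cycle factorization first and then read off the sign from the lengths of the cycles. I read the product $c_1c_2\cdots c_\mu$, with $c_k=(p_k,p_k-1,\ldots,k+1,k)$, as composition of functions with the rightmost cycle acting first, and I record the structural fact that the support of $c_k$ is exactly the block of consecutive integers $\{k,k+1,\ldots,p_k\}$: the cycle $c_k$ sends $k\mapsto p_k$ and $j\mapsto j-1$ for $k<j\le p_k$, while fixing everything outside. Since $p_1<\cdots<p_\mu$ forces $p_k\ge k$, these supports are ordered in a way that makes the composite transparent on the first block.

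Before the induction I would verify the factorization on the first block $\{1,\ldots,\mu\}$ directly, as this is both the transparent part and exactly what the induction must reproduce. Fix $k\le\mu$ and feed $k$ into $c_1\cdots c_\mu$. For $j>k$ the support $\{j,\ldots,p_j\}$ misses $k$, so $c_\mu,\ldots,c_{k+1}$ leave $k$ untouched; then $c_k$ sends $k\mapsto p_k$; and for $j<k$ we have $p_j<p_k$, so $p_k$ lies above the support of $c_j$ and is fixed by $c_{k-1},\ldots,c_1$. Hence $(c_1\cdots c_\mu)(k)=p_k=\sigma(k)$, matching the definition of the $2$-shuffle, and the composite maps $\{1,\ldots,\mu\}$ bijectively onto $\{p_1,\ldots,p_\mu\}$.

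The main work is the second block $\{\mu+1,\ldots,r\}$, and I expect the bookkeeping here to be the principal obstacle. I would argue by induction on $\mu$, peeling off the last cycle: let $\sigma''\in J_{\mu-1}(1,\ldots,r)$ be the $2$-shuffle with first block $p_1<\cdots<p_{\mu-1}$, and prove $\sigma=\sigma'' c_\mu$ by evaluating both sides on each integer. The delicate point is that $\sigma''$ sends the position $\mu-1+t$ to the $t$-th smallest element of the complement $C=\{1,\ldots,r\}\setminus\{p_1,\ldots,p_{\mu-1}\}$, whereas $\sigma$ uses the smaller complement $Q=C\setminus\{p_\mu\}$; these two enumerations differ by exactly one shift at the slot occupied by $p_\mu$. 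The key bookkeeping lemma is that $p_\mu$ is precisely the $(p_\mu-\mu+1)$-st smallest element of $C$, because exactly $\mu-1$ of the excluded integers $p_1,\ldots,p_{\mu-1}$ lie below it. Composing with $c_\mu$, which rotates the block $\{\mu,\ldots,p_\mu\}$, absorbs exactly this one-step shift, so a four-way case check (on $\{1,\ldots,\mu-1\}$, on $\mu$, on $\{\mu+1,\ldots,p_\mu\}$, and on $\{p_\mu+1,\ldots,r\}$) shows $\sigma'' c_\mu$ and $\sigma$ agree everywhere. The base case $\mu\le 1$ is immediate from the support description, and by the induction hypothesis $\sigma''=c_1\cdots c_{\mu-1}$, giving $\sigma=c_1\cdots c_\mu$.

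Finally the sign is a corollary. Each $c_k$ is a single cycle on the $p_k-k+1$ elements $\{k,\ldots,p_k\}$, hence has sign $(-1)^{p_k-k}$. Multiplying over $k$ yields $(-1)^\sigma=(-1)^{\sum_{k=1}^\mu(p_k-k)}=(-1)^{(p_1-1)+\cdots+(p_\mu-\mu)}$, and since $\sum_{k=1}^\mu k=\mu(\mu+1)/2$ this equals $(-1)^{p_1+\cdots+p_\mu}(-1)^{\mu(\mu+1)/2}$, as claimed. As an independent check, and a route that bypasses the cycle decomposition entirely for the sign alone, one can recount inversions: both blocks are internally increasing, so the only inversions occur between them, and the value $p_i$ contributes exactly $p_i-i$ of them (there are $p_i-1$ integers below $p_i$, of which $i-1$ sit in the first block), reproducing the same exponent $\sum_{i=1}^\mu(p_i-i)$.
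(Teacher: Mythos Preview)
Your proof is correct. Both you and the paper argue by induction, but the induction variable differs: the paper inducts on $r$, splitting into the two cases $\sigma(r)=r$ (so $\sigma$ restricts to $J_\mu(1,\ldots,r-1)$) and $\sigma(\mu)=r$ (so one peels off the last cycle $c_\mu=(r,r-1,\ldots,\mu)$ and reduces to a shuffle in $J_{\mu-1}(1,\ldots,r-1)$). You instead induct on $\mu$, always peeling off $c_\mu$ and reducing to $\sigma''\in J_{\mu-1}(1,\ldots,r)$, which requires the slightly more delicate bookkeeping you carry out with the ``one-step shift'' at the slot of $p_\mu$ in the complement $C$. The paper's split avoids that shift analysis by only removing the last cycle when $p_\mu=r$, at the cost of an extra case; your approach is more uniform once the shift lemma is in hand. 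Your direct verification on $\{1,\ldots,\mu\}$ and, especially, the independent inversion count for the sign are pleasant additions not present in the paper.
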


\begin{proof}
    Clearly the conclusion holds when $r=1$. Assume it holds for all smaller $r$. 
    We consider the position of the largest number $r$ in the two sequences. Only two cases may happen: $\sigma(r) = r$ or $\sigma(\mu)=r$. If $\sigma(r) = r$, then $\sigma$ itself can be regarded as an element in $J_{\mu}(1,..., r-1)$. The conclusion follows by induction hypothesis. If $\sigma(\mu)=r$, then the permutation 
    \begin{align*}
        &\tau(1) = \sigma(1), ..., \tau(\mu-1) = \sigma(\mu-1)\\
        &\tau(\mu)=\sigma(\mu+1), ..., \tau(r-1) = \sigma(r)
    \end{align*}
    is an element in $J_{\mu-1}(1, ..., r-1)$. By the induction hypothesis, 
    \begin{align}
        \tau = (p_1, p_1-1, ..., 2, 1) (p_2, p_2-1, ..., 3, 2) \cdots (p_{\mu-1}, p_{\mu-1}-1, ..., \mu, \mu-1) \label{2-shuffle-1}
    \end{align}
    We may also regard $\tau$ as an element in the symmetric group of $\{1, ..., r\}$ sending $r$ to $r$. Then (\ref{2-shuffle-1}) still holds. From the expression 
    \begin{align*}
        \sigma&=\begin{pmatrix}
            1 & \cdots & \mu-1 & \mu & \mu+1 & \cdots & r-1 & r\\
            \sigma(1) & \cdots & \sigma(\mu-1) & r & \sigma(\mu+1) & \cdots & \sigma(r-1) & \sigma(r)
        \end{pmatrix},\\
        \tau &= \begin{pmatrix}
            1 & \cdots & \mu-1 & \mu & \mu+1 & \cdots & r-1 & r \\
            \sigma(1) & \cdots & \sigma(\mu-1) & \sigma(\mu+1) & \sigma(\mu+2) & \cdots & \sigma(r) & r
        \end{pmatrix}, 
    \end{align*}
    it is clear that 
    $$\sigma = \tau (r, r-1, ...,\mu+1, \mu) = \tau (p_\mu, p_\mu-1, ..., \mu+1, \mu).$$
    So the conclusion is proved. 
\end{proof}

%An alternative form we will be using is \textcolor{red}{Problematic}
%\begin{align*}
%     & \sum_{\substack{1\leq i_1 < \cdots < i_\mu \leq r\\
%     1\leq i_{\mu+1}< \cdots < i_{r}\leq r}} \sum_{j=1}^\mu (-1)^{j-1}(-1)^{i_1+1+\cdots+i_\mu+\mu} \Phi(i_1, ..., \widehat{i_j}, ..., i_\mu, i_{\mu+1}, ..., i_r)\Psi(i_j)
%     \nonumber\\
%     & = \sum_{j=1}^{r} (-1)^{j-1}\Psi(j) \sum_{\substack{i_1, ..., \widehat{i_j}, ..., i_r \in \{1, ..., \widehat{j}, ..., r\}, \\
%     i_1 < \cdots < \widehat{i_j} < \cdots < i_r }} (-1)^{i_1 + 1 + \cdots +\widehat{(i_j + j)} + \cdots + i_\mu+\mu} \Phi(i_1, ..., \widehat{i_j}, ..., i_\mu, i_{\mu+1}, ..., i_r).\\
%     & \sum_{\substack{1\leq i_1 < \cdots < i_\mu \leq r\\
%     1\leq i_{\mu+1}< \cdots < i_{r}\leq r}} \sum_{j=\mu+1}^r (-1)^{j-1}(-1)^{i_1+1+\cdots+i_\mu+\mu} \Phi(i_1, ..., i_\mu, i_{\mu+1}, ...,\widehat{i_j}, ...,  i_r)\Psi(i_j)
%     \nonumber\\
%     & = \sum_{j=1}^{r} (-1)^{j-1}\Psi(j) \sum_{\substack{i_1, ..., i_r \in \{1, ...,\widehat{j}, ..., r\}, \\
%     i_1 <  \cdots < i_r }} (-1)^{i_1 + 1 + \cdots + i_\mu+\mu} \Phi(i_1,  ..., i_\mu, i_{\mu+1}, ..., \widehat{i_j},..., i_r).
% \end{align*}

\subsection{Product of two normal-ordered products - simple case}

We start by the following special case. 

\begin{prop} \label{1-s-product-prop}
For $s\in \Z_+, a, b_1, ..., b_s\in \h$, 
\begin{align}
    & a(x) \nord b_1(y_1)\cdots b_s(y_s)\nord - \nord a(x)b_1(y_1)\cdots b_s(y_s)\nord \nonumber\\
    & = \sum_{i=1}^s (-1)^{i-1}(a, b_i) \iota_{xy_i}f(x, y_i) \nord b_1(y_1) \cdots \reallywidehat{b_i(y_i)} \cdots b_s(y_s)\nord \label{1-s-product-line-1}\\
    & + \sum_{i=1}^s \frac{(-1)^{i-1}}{2}(a, b_i) x^{-1/2}y_i^{-1/2} \nord b_1(y_1) \cdots \reallywidehat{b_i(y_i)} \cdots b_s(y_s)\nord\label{1-s-product-line-2}
\end{align}
\end{prop}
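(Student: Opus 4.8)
The plan is to split the left factor according to its mode type, $a(x) = a(x)^- + a(x)^+ + a(x)^0$, and to carry each piece separately into its normal-ordered slot. Since the normal ordering is multilinear (as recorded after Definition \ref{nord-defn-series}), we have $\nord a(x)b_1(y_1)\cdots b_s(y_s)\nord = \nord a(x)^- b_1\cdots b_s\nord + \nord a(x)^+ b_1\cdots b_s\nord + \nord a(x)^0 b_1\cdots b_s\nord$, so the left-hand side of the proposition is the sum of the three ``defects'' measuring how far $a(x)^-$, $a(x)^+$, $a(x)^0$ are from their normal-ordered positions when multiplied on the left. I will show that the negative part has zero defect, that the positive part produces exactly \eqref{1-s-product-line-1}, and that the zero part produces exactly \eqref{1-s-product-line-2}.

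The negative part is immediate: negative modes are placed at the front of the normal ordering with their listed order preserved, so $a(x)^-\nord b_1\cdots b_s\nord = \nord a(x)^- b_1\cdots b_s\nord$ with no correction. For the positive part I would, in each shuffle summand of $\nord b_1\cdots b_s\nord$, move $a(x)^+$ rightward until it reaches the head of the positive block. By Proposition \ref{pos-mode-prop} it anticommutes with the positive modes, and by \eqref{ideal-2} it anticommutes with the zero modes, so the only nontrivial contribution comes from passing the negative modes, where \eqref{anticomm-pm} supplies the single contraction $(a,b_i)\iota_{xy_i}f(x,y_i)$. The pass-through term reassembles $\nord a(x)^+ b_1\cdots b_s\nord$, while the contraction terms---summed over all shuffles and over the position of the contracted negative mode, and then over $\mu,\nu$---are reorganized by the combinatorial identity \eqref{Comb-Id-1}, the complementary shuffles $\tau$ recombining into the full normal-ordered product on $s-1$ factors. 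This yields precisely \eqref{1-s-product-line-1}.

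For the zero part, $a(x)^0 = a(0)x^{-1/2}$ anticommutes with every nonzero mode by \eqref{ideal-2}, so moving it to the left of the zero block of a shuffle summand costs only the sign $(-1)^{\mu+\nu}$ of the $\mu+\nu$ nonzero modes it crosses. There I would apply the defining zero-mode recursion \eqref{nord-zero-modes} to rewrite $a(0)\nord b_{\sigma(\cdots)}^0 \cdots \nord$ as $\nord a(0) b_{\sigma(\cdots)}^0\cdots\nord$ minus the single-contraction corrections carrying the coefficient $\tfrac12(a,\cdot)$. The first term reassembles $\nord a(x)^0 b_1\cdots b_s\nord$; the corrections, reorganized by \eqref{Comb-Id-3}, give \eqref{1-s-product-line-2}, with the factor $x^{-1/2}y_i^{-1/2}$ being exactly the scalars carried by $a(0)$ and $b_i(0)$. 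Adding the three parts reconstitutes $\nord a(x)b_1\cdots b_s\nord$ together with \eqref{1-s-product-line-1} and \eqref{1-s-product-line-2}, which is the assertion after transposing $\nord a(x)b_1\cdots b_s\nord$ to the left.

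The main obstacle is the sign bookkeeping in the zero-mode step, where four sources of sign must conspire: the shuffle sign $(-1)^\sigma$, the sign $(-1)^{\mu+\nu}$ from carrying $a(0)$ across the nonzero block, the sign $(-1)^{l}$ produced by the recursion \eqref{nord-zero-modes}, and the overall minus sign incurred when solving that recursion for $a(0)\nord\cdots\nord$. These combine to $(-1)^{\mu+\nu+i-1}(-1)^\sigma$, which is exactly the weight appearing on the left of \eqref{Comb-Id-3}; this is the reason \eqref{Comb-Id-3} is stated in that precise form. As a preliminary sanity check I would verify the case $s=1$ directly, where the only two surviving pairings $a(x)^+ b_1(y_1)^-$ and $a(x)^0 b_1(y_1)^0$ reproduce $(a,b_1)\iota_{xy_1}f(x,y_1)$ and $\tfrac12(a,b_1)x^{-1/2}y_1^{-1/2}$ respectively, confirming the coefficients before the shuffle machinery is engaged.
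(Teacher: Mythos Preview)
Your proposal is correct and follows essentially the same approach as the paper: split $a(x)$ into its negative, positive, and zero parts, push each through the shuffle expansion of $\nord b_1\cdots b_s\nord$, and invoke the combinatorial identities of Lemma~\ref{Comb-Id-Lemma} to reassemble the contraction sums into normal-ordered products on $s-1$ factors. The only cosmetic difference is that you work with the block order negative--positive--zero (as in Definition~\ref{nord-defn-modes} and Definition~\ref{na\"ive-vo-defn}) and therefore invoke \eqref{Comb-Id-3} for the zero-mode step, whereas the paper's proof writes the shuffle summands in the order negative--zero--positive and uses \eqref{Comb-Id-2}; since positive and zero modes anticommute cleanly by \eqref{ideal-2}, the two orderings are interchangeable and the identities \eqref{Comb-Id-2} and \eqref{Comb-Id-3} play symmetric roles.
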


\begin{proof} Write $$a(x) = a(x)^+ + a(x)^0 + a(x)^-.$$ 
For the positive mode: 
\begin{align}
    & a(x)^+\nord b_1(y_1)\cdots b_s(y_s)\nord \nonumber\\
    %====================
    & \begin{aligned}
    = a(x)^+\sum_{\mu =0}^r\sum_{\nu=0}^{r-\mu}\sum_{\sigma\in J(\{1, ..., r\}; \mu, \nu)} (-1)^\sigma & b_{\sigma(1)}(y_{\sigma(1)})^- \cdots b_{\sigma(\mu)}(y_{\sigma(\mu)})^-\\
    & \cdot  \nord b_{\sigma(\mu+1)}(y_{\sigma(\mu+1)})^0 \cdots b_{\sigma(\mu+\nu)}(y_{\sigma(\mu+\nu)})^0\nord\\
    & \cdot  b_{\sigma(\mu+\nu+1)}(y_{\sigma(\mu+\nu+1)})^+ \cdots b_{\sigma(r)}(y_{\sigma(r)})^+
    \end{aligned}\nonumber\\
    %====================
    & \begin{aligned}
    = \sum_{\mu =0}^r\sum_{\nu=0}^{r-\mu}\sum_{\sigma\in J(\{1, ..., r\}; \mu, \nu)} (-1)^\sigma (-1)^{\mu}& b_{\sigma(1)}(y_{\sigma(1)})^- \cdots b_{\sigma(\mu)}(y_{\sigma(\mu)})^-\\
    & \cdot  a(x)^+\cdot  \nord b_{\sigma(\mu+1)}(y_{\sigma(\mu+1)})^0 \cdots b_{\sigma(\mu+\nu)}(y_{\sigma(\mu+\nu)})^0\nord\\
    & \cdot  b_{\sigma(\mu+\nu+1)}(y_{\sigma(\mu+\nu+1)})^+ \cdots b_{\sigma(r)}(y_{\sigma(r)})^+
    \end{aligned}\label{1-s-product-pos-line-1}\\
    %====================
    & \begin{aligned}
    + \sum_{\mu =0}^r\sum_{\nu=0}^{r-\mu}\sum_{\sigma\in J(\{1, ..., r\}; \mu, \nu)} (-1)^\sigma \sum_{i=1}^\mu &(-1)^{i-1} (a, b_{\sigma(i)}) f(x_0, y_{\sigma(i)}) \\
    & \cdot  b_{\sigma(1)}(y_{\sigma(1)})^- \cdots \reallywidehat{b_{\sigma(i)}(y_{\sigma(i)})^-}\cdots  b_{\sigma(\mu)}(y_{\sigma(\mu)})^-\\
    & \cdot  \nord b_{\sigma(\mu+1)}(y_{\sigma(\mu+1)})^0 \cdots b_{\sigma(\mu+\nu)}(y_{\sigma(\mu+\nu)})^0\nord\\
    & \cdot  b_{\sigma(\mu+\nu+1)}(y_{\sigma(\mu+\nu+1)})^+ \cdots b_{\sigma(r)}(y_{\sigma(r)})^+
    \end{aligned}\label{1-s-product-pos-line-2}
\end{align}
For (\ref{1-s-product-pos-line-1}), we move $a(x)^+$ further to the right. So (\ref{1-s-product-pos-line-1}) becomes 
\begin{align*}
    \sum_{\mu =0}^r\sum_{\nu=0}^{r-\mu}\sum_{\sigma\in J(\{1, ..., r\}; \mu, \nu)} (-1)^\sigma (-1)^{\mu+\nu}& b_{\sigma(1)}(y_{\sigma(1)})^- \cdots b_{\sigma(\mu)}(y_{\sigma(\mu)})^-\nonumber\\
    & \cdot    \nord b_{\sigma(\mu+1)}(y_{\sigma(\mu+1)})^0 \cdots b_{\sigma(\mu+\nu)}(y_{\sigma(\mu+\nu)})^0\nord\nonumber\\
    & \cdot  a(x)^+\cdot b_{\sigma(\mu+\nu+1)}(y_{\sigma(\mu+\nu+1)})^+ \cdots b_{\sigma(r)}(y_{\sigma(r)})^+, 
\end{align*}
which is precisely $\nord a(x)^+ b_1(y_1)\cdots b_s(y_s)\nord$. 

For (\ref{1-s-product-pos-line-2}), we claim that it is precisely  (\ref{1-s-product-line-1}), which is indeed
\begin{align*}
    \sum_{i=1}^r(-1)^{i-1}(a,b_i)f(x, y_i) \sum_{\mu=0}^{r-1} \sum_{\nu=0}^{r-1-\mu} \sum_{\tau\in J(\{1, ..., \widehat{i}, ..., r\}), \mu, \nu) }(-1)^\tau& b_{\tau(1)}(y_{\tau(1)})^- \cdots \reallywidehat{b_{\tau(i)}(y_{\tau(i)})^-}\cdots b_{\tau(r)}(y_{\tau(r)})^-\\
    & \cdot  b_{\tau(\mu+1)}(y_{\tau(\mu+1)})^0 \cdots b_{\tau(\mu+\nu)}(y_{\tau(\mu+\nu)})^0 \\
    & \cdot  b_{\tau(\mu+\nu+1)}(y_{\tau(\mu+\nu+1)})^+ \cdots b_{\tau(r)}(y_{\tau(r)})^+. 
\end{align*}
This can be directly seen from (\ref{Comb-Id-1}) in Lemma \ref{Comb-Id-Lemma}. 

For the zero mode: 
\begin{align}
    & a(x)^0\nord b_1(y_1)\cdots b_s(y_s)\nord \nonumber\\
    %====================
    & \begin{aligned}
    = a(x)^0\sum_{\mu =0}^r\sum_{\nu=0}^{r-\mu}\sum_{\sigma\in J(\{1, ..., r\}; \mu, \nu)} (-1)^\sigma& b_{\sigma(1)}(y_{\sigma(1)})^- \cdots b_{\sigma(\mu)}(y_{\sigma(\mu)})^-\\
    & \cdot  \nord b_{\sigma(\mu+1)}(y_{\sigma(\mu+1)})^0 \cdots b_{\sigma(\mu+\nu)}(y_{\sigma(\mu+\nu)})^0\nord\\
    & \cdot  b_{\sigma(\mu+\nu+1)}(y_{\sigma(\mu+\nu+1)})^+ \cdots b_{\sigma(r)}(y_{\sigma(r)})^+
    \end{aligned}\nonumber\\
    %====================
    & \begin{aligned}
    = \sum_{\mu =0}^r\sum_{\nu=0}^{r-\mu}\sum_{\sigma\in J(\{1, ..., r\}; \mu, \nu)} (-1)^\sigma (-1)^{\mu}& b_{\sigma(1)}(y_{\sigma(1)})^- \cdots b_{\sigma(\mu)}(y_{\sigma(\mu)})^-\\
    & \cdot  a(x)^0\cdot  \nord b_{\sigma(\mu+1)}(y_{\sigma(\mu+1)})^0 \cdots b_{\sigma(\mu+\nu)}(y_{\sigma(\mu+\nu)})^0\nord\\
    & \cdot  b_{\sigma(\mu+\nu+1)}(y_{\sigma(\mu+\nu+1)})^+ \cdots b_{\sigma(r)}(y_{\sigma(r)})^+
    \end{aligned}\label{1-s-product-zero-1}
\end{align}
From the definition of normal ordering of zero mode, it is clear that 
\begin{align*}
    \nord a(x)^0 b_1(y_1)^0\cdots b_s(y_s)^0\nord &= a(x)^0 \nord b_1(y_1)^0\cdots b_s(y_s)^0 \nord \\ 
    & -  \sum_{i=1}^r \frac {(-1)^{i-1}} 2 (a, b_i)x^{-1/2}y_i^{-1/2} \nord b_1(y_1)^0 \cdots \reallywidehat{b_i(y_i)^0} \cdots b_s(y_s)^0\nord 
\end{align*} 
So (\ref{1-s-product-zero-1}) can be rewritten as 
\begin{align}
    \sum_{\mu =0}^r\sum_{\nu=0}^{r-\mu}\sum_{\sigma\in J(\{1, ..., r\}; \mu, \nu)} (-1)^\sigma (-1)^{\mu}& b_{\sigma(1)}(y_{\sigma(1)})^- \cdots b_{\sigma(\mu)}(y_{\sigma(\mu)})^-\nonumber\\
    & \cdot   \nord a(x)^0 b_{\sigma(\mu+1)}(y_{\sigma(\mu+1)})^0 \cdots b_{\sigma(\mu+\nu)}(y_{\sigma(\mu+\nu)})^0\nord\nonumber\\
    & \cdot  b_{\sigma(\mu+\nu+1)}(y_{\sigma(\mu+\nu+1)})^+ \cdots b_{\sigma(r)}(y_{\sigma(r)})^+\label{1-s-product-zero-2-line-1}\\
    + \sum_{\mu =0}^r\sum_{\nu=0}^{r-\mu}\sum_{\sigma\in J(\{1, ..., r\}; \mu, \nu)} (-1)^\sigma (-1)^{\mu}&\sum_{i=\mu+1}^{\mu+\nu}\frac{(-1)^{i-\mu-1}}2 b_{\sigma(1)}(y_{\sigma(1)})^- \cdots b_{\sigma(\mu)}(y_{\sigma(\mu)})^-\nonumber\\
    & \cdot   (a, b_i)x^{-1/2}y_i^{-1/2} \nord b_{\sigma(\mu+1)}(y_{\sigma(\mu+1)})^0 \cdots \reallywidehat{b_{\sigma(i)}(y_{\sigma(i)})^0}\cdots  b_{\sigma(\mu+\nu)}(y_{\sigma(\mu+\nu)})^0\nord\nonumber \\
    & \cdot  b_{\sigma(\mu+\nu+1)}(y_{\sigma(\mu+\nu+1)})^+ \cdots b_{\sigma(r)}(y_{\sigma(r)})^+\label{1-s-product-zero-2-line-2}
\end{align}
From (\ref{Comb-Id-2}) in Lemma \ref{Comb-Id-Lemma}, (\ref{1-s-product-zero-2-line-2}) can be  computed as 
\begin{align*}
    \sum_{j=1}^r \frac{(-1)^{j-1}}{2}(a, b_j)x_0^{-1/2}x_j^{-1/2} \sum_{\mu=0}^r \sum_{\nu=0}^{r-\mu} \sum_{\tau\in J\{1, ..., \widehat{j}, ..., r\}} &  b_{\tau(1)}(x_{\tau(1)})^- \cdots b_{\tau(\mu)}(x_{\tau(\mu)})^-\\
    & b_{\tau(\mu+1)}(x_{\tau(\mu+1)})^0 \cdots \reallywidehat{b_{\tau(j)}(x_{\tau(j)})^0}\cdots b_{\tau(\mu+\nu)}(x_{\tau(\mu+\nu)})^0 \\
    & b_{\tau(\mu+\nu+1)}(x_{\tau(\mu+\nu+1)})^+ \cdots b_{\tau(r)}(x_{\tau(r)})^+, 
\end{align*}
which is precisely (\ref{1-s-product-line-2}). And (\ref{1-s-product-zero-2-line-1}) is precisely $\nord a(x)^0 b_1(y_1)\cdots b_s(y_s)\nord$. 

Finally for the negative mode, it is straightforward that
$$a(x)^- b_1(y_1)\cdots b_s(y_s) = \nord a(x)^- b_1(y_1)\cdots b_s(y_s)\nord $$
The conclusion then follows by combining the formulas for the positive mode, zero mode and negative modes. 
\end{proof}

\begin{rema}
If we introduce 
\begin{align*}
    g(x, y) &= x^{-1/2}y^{-1/2}\cdot \frac 1 2 (x+y)(x-y)^{-1}\\
    &= f(x, y) + \frac 1 2 x^{-1/2} y^{-1/2},\\
    G(x, y) &= \iota_{xy}g(x,y), 
\end{align*}
then the conclusion is simply 
\begin{align*}
    & a(x) \nord b_1(y_1)\cdots b_s(y_s)\nord - \nord a(x)b_1(y_1)\cdots b_s(y_s)\nord \\
    & = \sum_{i=1}^r (-1)^{i-1}(a, b_i) G(x, y_i) \nord b_1(y_1) \cdots \reallywidehat{b_i(y_i)} \cdots b_s(y_s)\nord 
\end{align*}
To some extent this may be used as a ``definition'' for the normal ordering of these series, if we rewrite the formula as 
\begin{align}
    & \nord a(x)b_1(y_1)\cdots b_s(y_s)\nord\nonumber  \\
    = \ & a(x) \nord b_1(y_1)\cdots b_s(y_s)\nord +  \sum_{i=1}^r (-1)^{i}(a, b_i) G(x, y_i) \nord b_1(y_1) \cdots \reallywidehat{b_i(y_i)} \cdots b_s(y_s)\nord \label{nord-left-recurrence}
\end{align}
\end{rema}

\begin{prop}
\begin{align*}
    & \nord a_1(x_1)\cdots a_{r}(x_{r})\nord b(y)-  \nord a_1(x_1)\cdots a_{r}(x_{r}) b(y)\nord \\
    =& \sum_{i=1}^r (-1)^{r-i} (a_i, b)\iota_{x_iy}f(x_i, y) \nord a_1(x_1)\cdots \widehat{a_i}(x_i) \cdots a_r(x_r) \nord \\
    &+ \sum_{i=1}^r \frac{(-1)^{r-i}}{2} (a_i, b) x_i^{-1/2}y^{-1/2}\nord a_1(x_1) \cdots \widehat{a_i}(x_i) \cdots a_r(x_r)\nord \\
    =& \sum_{i=1}^r (-1)^{r-i} (a_i, b) G(x_i, y) \nord a_1(x_1)\cdots \widehat{a_i}(x_i) \cdots a_r(x_r) \nord.
\end{align*}
\end{prop}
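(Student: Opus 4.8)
The plan is to mirror the proof of Proposition~\ref{1-s-product-prop}, but with the single series $b(y)$ appended on the \emph{right} of the normal-ordered product and decomposed as $b(y)=b(y)^-+b(y)^0+b(y)^+$. For each summand I would track how it must be transported into its designated block of $\nord a_1(x_1)\cdots a_r(x_r)\nord$ (negatives on the left, zero modes in the middle, positives on the right, following the convention used in the proof of Proposition~\ref{1-s-product-prop}), record the contraction terms that arise, and then show that the contraction-free contributions reassemble into $\nord a_1(x_1)\cdots a_r(x_r)b(y)\nord$.

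First I would dispose of $b(y)^+$: since it is a positive mode appended at the far right, it already occupies the correct end of the positive block, so it anticommutes with nothing and contributes only $\nord a_1(x_1)\cdots a_r(x_r)b(y)^+\nord$ to the main term, exactly as $a(x)^-$ did on the left. Next, $b(y)^-$ must be carried leftward into the negative block: by \eqref{ideal-2} it passes the zero modes with no contraction, while by \eqref{anticomm-pm} each positive factor $a_i(x_i)^+$ it crosses contributes $(a_i,b)\,\iota_{x_iy}f(x_i,y)$. Summing over the shuffles and applying the identity of Lemma~\ref{Comb-Id-Lemma} that extracts the outermost block — the analogue of how \eqref{Comb-Id-1} was used on the left, here \eqref{Comb-Id-3} — collapses these into the first displayed sum of the statement, while the contraction-free part gives $\nord a_1(x_1)\cdots a_r(x_r)b(y)^-\nord$.

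The zero mode $b(y)^0$ is handled last: it passes the positive block with no contraction (again \eqref{ideal-2}) and then joins the zero block \emph{from the right}, so here I would invoke the right-hand recursion of Proposition~\ref{zero-mode-prop} in place of the defining left recursion \eqref{nord-zero-modes}. This produces the zero-mode contractions $\tfrac12(a_i,b)\,x_i^{-1/2}y^{-1/2}$, which reorganize via \eqref{Comb-Id-2} into the second displayed sum, plus the main-term piece $\nord a_1(x_1)\cdots a_r(x_r)b(y)^0\nord$. Adding the three cases, the contraction-free contributions assemble into $\nord a_1(x_1)\cdots a_r(x_r)b(y)\nord$ and the two contraction sums give the stated right-hand side; the final $G$-form then follows from $G(x,y)=f(x,y)+\tfrac12 x^{-1/2}y^{-1/2}$.

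The hard part will be the sign bookkeeping needed to produce the factor $(-1)^{r-i}$. No single identity of Lemma~\ref{Comb-Id-Lemma} outputs this sign directly — each returns $(-1)^{j-1}$ — so $(-1)^{r-i}$ must emerge from combining the anticommutation signs accumulated while transporting $b(y)^-$ (respectively $b(y)^0$) leftward past the intervening modes with the shuffle-reorganization sign, precisely the delicate parity analysis the author already flags as having many cases in Lemma~\ref{Comb-Id-Lemma}. An alternative that avoids the $3$-shuffle combinatorics altogether is induction on $r$: expand $\nord a_1(x_1)\cdots a_r(x_r)\nord$ by the left recursion \eqref{nord-left-recurrence}, apply the inductive hypothesis to the resulting $(r-1)$-fold products multiplied by $b(y)$, and push the liberated $a_1(x_1)$ back inside using Proposition~\ref{1-s-product-prop}. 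The single-contraction cross terms then cancel in pairs because $(-1)^{i}+(-1)^{i-1}=0$, and the only remaining check is that the double-contraction terms — those carrying both an $(a_1,a_k)$ and an $(a_i,b)$ factor — cancel, which is again a matter of matching signs.
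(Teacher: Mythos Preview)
Your proposal is correct and in fact subsumes the paper's own proof, which is a two-line sketch: it notes that one can either repeat the shuffle analysis of Proposition~\ref{1-s-product-prop} or, ``more conceptually'', argue by induction on $r$ using the recurrence~\eqref{nord-left-recurrence} as in Proposition~\ref{zero-mode-prop}, and then omits all details. You have outlined both routes --- the direct $b(y)=b(y)^-+b(y)^0+b(y)^+$ decomposition with Proposition~\ref{zero-mode-prop} supplying the right-hand recursion for the zero block, and the induction via~\eqref{nord-left-recurrence} --- and correctly flagged the sign bookkeeping as the only real content; the paper adds nothing beyond this.
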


\begin{proof}
One could certainly repeat the analysis of shuffles as in Proposition \ref{1-s-product-prop}. A more conceptual approach is to use induction as in Proposition \ref{zero-mode-prop} with the recurrence (\ref{nord-left-recurrence}). We shall omit the details here. 
%\textcolor{red}{Should we supplement the details? }
\end{proof}

\subsection{Product of two normal-ordered products - general case}

\begin{thm}\label{Product-r-s-thm} 
    Let 
    \begin{align*}
        g_{mn}(x,y) & = \frac 1 {m!n!}\frac{\partial^{m+n}}{\partial x^m \partial y^n} g(x,y), \\
        G_{mn}(x,y) & = \iota_{xy}g_{mn}(x,y).
    \end{align*}
    Then for $r, s\in \Z_+, a_1, ..., a_r, b_1, ..., b_s\in \h, m_1, ..., m_r, n_1, ..., n_s\in \N$. Then 
    \begin{align}
        & \nord a_1^{(m_1)}(x_1)\cdots a_r^{(m_r)}(x_r)\nord  \cdot \nord b_1^{(n_1)}(y_1)\cdots b_s^{(n_s)}(y_s)\nord  \nonumber \\
        = \ & \nord a_1^{(m_1)}(x_1)\cdots a_r^{(m_r)}(x_r)b_1^{(n_1)}(y_1)\cdots b_s^{(n_s)}(y_s)\nord \nonumber \\
        & + \sum_{\rho=1}^{\min(r,s)}\sum_{\substack{ 1 \leq i_1 < \cdots < i_\rho\leq r\\ 1 \leq j_1 < \cdots < j_\rho \leq s}}(-1)^{i_1+\cdots + i_\rho + j_1+\cdots + j_\rho }(-1)^{r\rho + \rho(\rho+1)/2}\nonumber \\
        & \hspace{9.7 em}\cdot \begin{vmatrix}
            (a_{i_1}, b_{j_1})G_{m_{i_1}n_{j_1}}(x_{i_1}, y_{j_1}) & \cdots & (a_{i_1}, b_{j_\rho})G_{m_{i_1}n_{j_\rho}}(x_{i_1}, y_{j_\rho})\\
            \vdots & & \vdots\nonumber  \\
            (a_{i_\rho}, b_{j_1})G_{m_{i_\rho}n_{j_1}}(x_{i_\rho}, y_{j_1}) & \cdots & (a_{i_\rho}, b_{j_\rho})G_{m_{i_\rho}n_{j_\rho}}(x_{i_\rho}, y_{j_\rho})
        \end{vmatrix}\nonumber \\
        & \hspace{10 em}\cdot \nord a_{1}^{(m_1)}(x_1) \cdots \reallywidehat{a_{i_1}^{(m_{i_1})}(x_{i_1})} \cdots \reallywidehat{a_{i_\rho}^{(m_{i_\rho})}(x_{i_\rho})} \cdots a_{r}^{(m_r)}(x_r)\nonumber  \\
        & \hspace{11 em}\cdot b_{1}^{(n_1)}(y_1) \cdots \reallywidehat{b_{j_1}^{(n_{j_1})}(y_{j_1})} \cdots \reallywidehat{b_{j_\rho}^{(n_{j_\rho})}(y_{j_\rho})} \cdots b_{s}^{(n_s)}(y_s)\nord \label{Product-r-s-formula}     
    \end{align}
\end{thm}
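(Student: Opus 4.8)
The plan is to first strip off the derivatives and then induct on $r$, using Proposition \ref{1-s-product-prop} (left multiplication by a single mode) as the only analytic input and handling everything else combinatorially. By Remark \ref{zero-suffices}, normal ordering commutes with every $\partial_{x_i}$ and $\partial_{y_j}$; moreover, on each side of (\ref{Product-r-s-formula}) and within each individual summand, a given variable $x_i$ (resp. $y_j$) occurs in exactly one tensor factor — inside the surviving normal-ordered product when $i$ is not among the contracted indices, and inside the single determinant entry $(a_i,b_j)G_{m_in_j}(x_i,y_j)$ otherwise. Hence applying $\tfrac{1}{m_1!\cdots n_s!}\,\partial_{x_1}^{m_1}\cdots\partial_{y_s}^{n_s}$ to the identity with all $m_i=n_j=0$ reproduces the identity in full generality, each $G$ turning into the corresponding $G_{mn}$. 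So it suffices to treat the case $m_1=\cdots=m_r=n_1=\cdots=n_s=0$.

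For that case I would induct on $r$. The base case $r=1$ is exactly Proposition \ref{1-s-product-prop}, since $\nord a_1(x_1)\nord=a_1(x_1)$ and the single sum there is the expansion of a $1\times1$ determinant. For the inductive step I rewrite $\nord a_1(x_1)\cdots a_r(x_r)\nord$ using the left-recurrence (\ref{nord-left-recurrence}) as $a_1(x_1)\,\nord a_2(x_2)\cdots a_r(x_r)\nord$ plus the within-$A$ correction $\sum_{k=2}^r(-1)^{k-1}(a_1,a_k)G(x_1,x_k)\,\nord a_2(x_2)\cdots\reallywidehat{a_k(x_k)}\cdots a_r(x_r)\nord$, then right-multiply by $\nord b_1(y_1)\cdots b_s(y_s)\nord$. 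The induction hypothesis applies to $\nord a_2\cdots a_r\nord\cdot\nord b_1\cdots b_s\nord$ (first factor of length $r-1$) and to each $\nord a_2\cdots\reallywidehat{a_k}\cdots a_r\nord\cdot\nord b_1\cdots b_s\nord$ (length $r-2$), and Proposition \ref{1-s-product-prop} is used once more to move the loose $a_1(x_1)$ into every surviving normal-ordered product that appears.

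Two reconciliations then finish the argument. First, the spurious within-$A$ contractions cancel: when $a_1(x_1)$ is reabsorbed by Proposition \ref{1-s-product-prop} it also contracts against each surviving $a_k$, and a short parity count — comparing the slot of $a_k$ in the surviving product against the index shift hidden in the $(-1)^{k-1}$ correction — shows each such term is exactly the negative of the corresponding term carried by the within-$A$ correction above; so every $(a_i,a_j)$ factor disappears and only cross contractions $(a_i,b_j)$ survive. Second, for a fixed cross pattern with indices $\{i_1<\cdots<i_\rho\}$ and $\{j_1<\cdots<j_\rho\}$ the surviving contributions reorganize into a Laplace (cofactor) expansion of the $\rho\times\rho$ determinant: the terms with $i_1\neq1$ arise from $a_1(x_1)$ joining the surviving product while the induction hypothesis already supplies the full determinant, whereas the terms with $i_1=1$ arise from $a_1(x_1)$ contracting with each $b_{j_l}$ in turn, the factor $(a_1,b_{j_l})G(x_1,y_{j_l})$ multiplying the complementary $(\rho-1)\times(\rho-1)$ minor delivered by the induction hypothesis.

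The hard part is the sign bookkeeping in this last step. One must show that the position sign $(-1)^{p-1}$ produced by Proposition \ref{1-s-product-prop}, where $p$ is the slot of $b_{j_l}$ inside the surviving product, together with the index shift $\{2,\dots,r\}\to\{1,\dots,r-1\}$ used to invoke the induction hypothesis and the passage of the global prefactor from $(-1)^{(r-1)(\rho-1)+(\rho-1)\rho/2}$ to $(-1)^{r\rho+\rho(\rho+1)/2}$, collapses precisely onto the cofactor sign $(-1)^{1+l}$ with the correct factor $(-1)^{i_1+\cdots+i_\rho+j_1+\cdots+j_\rho}$. A direct count gives $p=(r-\rho)+(j_l-l)+1$, and substituting this shows that all $\rho$- and $l$-dependence in the difference of exponents cancels, so the cofactor signs match; the same mechanism shows the $i_1\neq1$ terms carry no residual sign. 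The parity of the relevant surviving shuffle is governed by Lemma \ref{2-shuffle-lemma}, and the reindexing of the nested shuffle sums is an instance of Lemma \ref{Comb-Id-Lemma} and Remark \ref{2-shuffle-summation}. Once these signs are matched in both cases, summing over all patterns $(\rho,\{i_k\},\{j_l\})$ assembles the right-hand side of (\ref{Product-r-s-formula}).
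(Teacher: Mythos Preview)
Your proposal is correct and spells out in detail what the paper only gestures at: the paper's proof consists entirely of the sentence ``replace the symbol $:$ and the function $f$ in the proof of Theorem 4.3 in \cite{FQ} by the symbol $\nord$ and the series $G$,'' deferring all content to Part I. Your induction on $r$ using the left-recurrence (\ref{nord-left-recurrence}) together with Proposition \ref{1-s-product-prop} is the standard Wick-theorem argument and is almost certainly the structure of the Part I proof being invoked; the reduction to $m_i=n_j=0$ via Remark \ref{zero-suffices} is likewise standard. Your identification of the two reconciliation steps --- cancellation of the within-$A$ contractions and the Laplace expansion of the determinant --- is accurate, and the sign bookkeeping you outline (position of $b_{j_l}$ in the surviving product, shift of the global prefactor from $r-1$ to $r$) is the right mechanism, though as you acknowledge it is tedious to execute in full.
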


\begin{proof}
It suffices to replace the symbol $:$ and the function $f$ in the proof of Theorem 4.3 in \cite{FQ} respectively by the symbol $\nord$ and the series $G$. Other arguments are verbatim. We shall not repeat the details here. 
\end{proof}

\begin{rema}
Similarly, the result can be viewed as the Wick's theorem in quantum field theory. Theorem \ref{Product-r-s-thm} shows that Wick's theorem holds without any assumption on the relations among the creation operators and the zero modes. 
\end{rema}

\subsection{Product and iterate formulas}

\begin{cor}\label{Product-thm}
    For $r, s\in \N, a_1, ..., a_r, b_1, ..., b_s\in \h, m_1, ..., m_r, n_1, ..., n_s\in \N$
    \begin{align}
        & \bar Y_W(a_1(-m_1-1/2)\cdots a_r(-m_r-1/2)\one, x)\bar Y_W(b_1(-n_1-1/2)\cdots b_s(-n_s-1/2)\one, y) \nonumber\\
        = \ & \nord a_1^{(m_1)}(x)\cdots a_r^{(m_r)}(x)b_1^{(n_1)}(y)\cdots b_s^{(n_s)}(y)\nord \nonumber \\
        & + \sum_{\rho=1}^{\min(r,s)}\sum_{\substack{ 1 \leq i_1 < \cdots < i_\rho\leq r\\ 1 \leq j_1 < \cdots < j_\rho \leq s}}(-1)^{i_1+\cdots + i_\rho + j_1+\cdots + j_\rho }(-1)^{r\rho + \rho(\rho+1)/2}\nonumber \\
        & \hspace{9.7 em}\cdot \begin{vmatrix}
            (a_{i_1}, b_{j_1})G_{m_{i_1}n_{j_1}}(x, y) & \cdots & (a_{i_1}, b_{j_\rho})G_{m_{i_1}n_{j_\rho}}(x, y)\\
            \vdots & & \vdots\nonumber  \\
            (a_{i_\rho}, b_{j_1})G_{m_{i_\rho}n_{j_1}}(x, y) & \cdots & (a_{i_\rho}, b_{j_\rho})G_{m_{i_\rho}n_{j_\rho}}(x, y)
        \end{vmatrix}\nonumber \\
        & \hspace{10 em}\cdot \nord a_{1}^{(m_1)}(x) \cdots \reallywidehat{a_{i_1}^{(m_{i_1})}(x)} \cdots \reallywidehat{a_{i_\rho}^{(m_{i_\rho})}(x)} \cdots a_{r}^{(m_r)}(x)\nonumber  \\
        & \hspace{11 em}\cdot b_{1}^{(n_1)}(y) \cdots \reallywidehat{b_{j_1}^{(n_{j_1})}(y)} \cdots \reallywidehat{b_{j_\rho}^{(n_{j_\rho})}(y)} \cdots b_{s}^{(n_s)}(y)\nord \label{Product-formula}
    \end{align}
\end{cor}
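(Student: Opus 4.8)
The plan is to obtain (\ref{Product-formula}) as the diagonal specialization $x_1=\cdots=x_r=x$, $y_1=\cdots=y_s=y$ of Theorem \ref{Product-r-s-thm}. I would first dispose of the degenerate cases allowed by $r,s\in\N$ but excluded from the theorem: if $r=0$ or $s=0$, then one of the two factors equals $\bar Y_W(\one,\cdot)=1_W$ by Definition \ref{na\"ive-vo-defn}, the sum over $\rho\geq 1$ is empty since $\min(r,s)=0$, and the claimed identity collapses to the definition of the remaining na\"ive vertex operator. So from now on I assume $r,s\geq 1$.

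With $r,s\geq 1$, Definition \ref{na\"ive-vo-defn} gives
$$\bar Y_W(a_1(-m_1-1/2)\cdots a_r(-m_r-1/2)\one,x)=\nord a_1^{(m_1)}(x)\cdots a_r^{(m_r)}(x)\nord,$$
and likewise for the second factor at $y$. Hence the left side of (\ref{Product-formula}) is exactly the left side of (\ref{Product-r-s-formula}) after the substitution $x_i\mapsto x$, $y_j\mapsto y$. The same substitution carries the right side of (\ref{Product-r-s-formula}) termwise to the right side of (\ref{Product-formula}): each contraction $G_{m_{i_k}n_{j_l}}(x_{i_k},y_{j_l})$ becomes $G_{m_{i_k}n_{j_l}}(x,y)$, a well-defined series through $\iota_{xy}$, and each surviving normal-ordered product becomes the corresponding one at the coincident points $x$ and $y$, which is well-defined by Definition \ref{nord-defn-series}. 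Thus, formally, the Corollary is precisely Theorem \ref{Product-r-s-thm} read under this diagonal substitution.

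The one point carrying genuine content, and the step I expect to be the main obstacle, is the legitimacy of the substitution, since setting formal variables equal can create divergences in general. For the right side this is immediate: it is a finite sum (over $\rho\leq\min(r,s)$ and finitely many index tuples) of products of finitely many well-defined specialized series $G_{\cdot\cdot}(x,y)$ with well-defined coincident-point operators. For the left side I would use the property recorded in Definition \ref{na\"ive-vo-defn} that $\bar Y_W(v,x)w\in W((x^{1/2}))$ for every $w\in W$; this makes the product of the two na\"ive vertex operators a well-defined operator on $W$, and it coincides with the specialization of $\nord a_1^{(m_1)}(x_1)\cdots\nord\cdot\nord b_1^{(n_1)}(y_1)\cdots\nord$ because, applied to a fixed $w$, only finitely many monomials in the original variables feed into a given power of $x^{1/2}$ and $y^{1/2}$: the modes of positive degree lower weight and act locally nilpotently, while the $\N$-grading on $W$ is bounded below. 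Once both sides are seen to be well-defined after the substitution, the identity follows at once, since Theorem \ref{Product-r-s-thm} asserts that the two distinct-variable expressions are one and the same element, and applying the now-defined diagonal substitution to that common element yields equal results, namely the two sides of (\ref{Product-formula}). If desired, the convergence bookkeeping can be made fully explicit by pairing with an arbitrary $w'\in W'$ and checking the identity coefficient by coefficient.
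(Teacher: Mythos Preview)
Your proposal is correct and matches the paper's approach: the paper's proof is the single sentence ``Directly substitute $x_1 = x, \ldots, x_r = x$, $y_1 = y, \ldots, y_s = y$ in Theorem \ref{Product-r-s-thm} to see the conclusion.'' You have supplied more than the paper does, by treating the degenerate cases $r=0$ or $s=0$ explicitly and by justifying why the diagonal specialization is a well-defined operation on both sides; the paper takes all of this for granted.
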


\begin{proof}
    Directly substitute $x_1 = x, ..., x_r = r, y_1=y, ..., y_s = y$ in Theorem \ref{Product-r-s-thm} to see the conclusion.  
\end{proof}

\begin{cor}\label{Iterate-thm}
    For $r, s\in \N, a_1, ..., a_r, b_1, ..., b_s\in \h, m_1, ..., m_r, n_1, ..., n_s\in \N$
    \begin{align}
        & \bar Y_W(Y_V(a_1(-m_1-1/2)\cdots a_r(-m_r-1/2)\one, x)b_1(-n_1-1/2)\cdots b_s(-n_s-1/2)\one, y)\nonumber \\
        = \ & \nord a_1^{(m_1)}(y+x)\cdots a_r^{(m_r)}(y+x)b_1^{(n_1)}(y)\cdots b_s^{(n_s)}(y)\nord \nonumber \\
        & + \sum_{\rho=1}^{\min(r,s)}\sum_{\substack{ 1 \leq i_1 < \cdots < i_\rho\leq r\\ 1 \leq j_1 < \cdots < j_\rho \leq s}}(-1)^{i_1+\cdots + i_\rho + j_1+\cdots + j_\rho }(-1)^{r\rho + \rho(\rho+1)/2}\nonumber \\
        & \hspace{9.7 em}\cdot \begin{vmatrix}
            (a_{i_1}, b_{j_1})(x^{-n_{j_1}-1})^{(m_{i_1})} & \cdots & (a_{i_1}, b_{j_\rho})(x^{-n_{j_\rho}-1})^{(m_{i_1})}\\
            \vdots & & \vdots\nonumber  \\
            (a_{i_\rho}, b_{j_1})(x^{-n_{j_1}-1})^{(m_{i_\rho})} & \cdots & (a_{i_\rho}, b_{j_\rho})(x^{-n_{j_\rho}-1})^{(m_{i_\rho})} 
        \end{vmatrix}\nonumber \\
        & \hspace{10 em}\cdot \nord a_{1}^{(m_1)}(y+x) \cdots \reallywidehat{a_{i_1}^{(m_{i_1})}(y+x)} \cdots \reallywidehat{a_{i_\rho}^{(m_{i_\rho})}(y+x)} \cdots a_{r}^{(m_r)}(y+x)\nonumber  \\
        & \hspace{11 em}\cdot b_{1}^{(n_1)}(y) \cdots \reallywidehat{b_{j_1}^{(n_{j_1})}(y)} \cdots \reallywidehat{b_{j_\rho}^{(n_{j_\rho})}(y)} \cdots b_{s}^{(n_s)}(y)\nord \label{Iterate-formula}
    \end{align}
\end{cor}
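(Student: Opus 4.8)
The plan is to reduce the computation to the structure of the vertex operator $Y_V$ on $V$ established in \cite{FQ} and then to push the resulting expression through the naive operator $\bar Y_W$. The first step is to expand $Y_V(a_1(-m_1-1/2)\cdots a_r(-m_r-1/2)\one, x)b_1(-n_1-1/2)\cdots b_s(-n_s-1/2)\one$ as an element of $V$. Since $V$ is built from half-integral modes it carries \emph{no} zero modes and all of its modes anti-commute, so the iterate on $V$ admits a fermionic Wick expansion in which the only contractions are the cross-contractions pairing an $a$-field at $x$ with a $b$-generator sitting at the origin inside the second argument. Such a contraction is computed from the anticommutator of the positive modes of $a_i$ against $b_j(-n_j-1/2)$ and equals $(a_i,b_j)(x^{-n_j-1})^{(m_i)}$; this produces exactly the matrix entries in (\ref{Iterate-formula}), and the accompanying signs $(-1)^{i_1+\cdots+i_\rho+j_1+\cdots+j_\rho}(-1)^{r\rho+\rho(\rho+1)/2}$ coincide with those already appearing in Theorem \ref{Product-r-s-thm}. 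The uncontracted part is a generating series in $x$ whose coefficients are creation monomials $a_1(-k_1-1/2)\cdots a_r(-k_r-1/2)b_1(-n_1-1/2)\cdots b_s(-n_s-1/2)\one$, with the $a$-exponents summed against $\prod_i \binom{k_i}{m_i}x^{k_i-m_i}$.

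The second step is to apply $\bar Y_W(\,\cdot\,, y)$ termwise. By linearity the scalar contraction factors, the signs, and the determinants pass through untouched, so the only remaining content is the value of $\bar Y_W$ on the uncontracted creation monomials. By the definition of $\bar Y_W$, a monomial is sent to the corresponding normal-ordered field product at $y$, and the full zero-mode normal ordering lives entirely on the $W$-side and is thereby incorporated automatically. The $b$-generators carry no $x$-dependence, so each reappears as a field $b_j^{(n_j)}(y)$ at the point $y$, while the $a$-generators carry all of the $x$-dependence through the coefficients $\binom{k_i}{m_i}x^{k_i-m_i}$.

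The third step, which produces the argument shift $y\mapsto y+x$ on the $a$-fields, is the single-field identity
\[
a^{(m)}(y+x) = \sum_{k\geq m}\binom{k}{m}x^{k-m}a^{(k)}(y),
\]
which is just Taylor's theorem for $a^{(m)}$, using $\tfrac{1}{p!}\partial_y^{p}a^{(m)}(y)=\binom{m+p}{p}a^{(m+p)}(y)$. Because the normal ordering is multilinear and commutes with differentiation (Remark \ref{zero-suffices}), summing the uncontracted monomials against $\prod_i\binom{k_i}{m_i}x^{k_i-m_i}$ reduces to this identity applied field by field, converting each $a_i$-field into $a_i^{(m_i)}(y+x)$ while leaving each $b_j$-field as $b_j^{(n_j)}(y)$. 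This is exactly the uncontracted normal-ordered product in (\ref{Iterate-formula}), and together with the preserved contraction determinants it yields the full right-hand side.

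The main obstacle I anticipate is not the shift identity, which is elementary, but the correct bookkeeping at the level of $V$: one must use the description of the \emph{actual} operator $Y_V$ obtained in \cite{FQ} (the correction of the naive operator by the $\exp(\Delta(x))$-operator) and verify that its net contribution to this particular iterate is precisely the cross-contraction determinant with no spurious self-contractions among the $a$-fields or among the $b$-generators. This is exactly where the two special features are used: $V$ has no zero modes, and the second argument is a creation monomial on the vacuum, so every annihilation mode of a corrected $a$-field can only reach a $b_j$-generator or the vacuum, leaving precisely the cross-pairings recorded in the determinant. The secondary task is the sign analysis, which can be transcribed directly from Theorem \ref{Product-r-s-thm} rather than recomputed, since the Wick combinatorics on $V$ is the same.
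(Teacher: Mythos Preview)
Your approach is the same as the paper's: quote Corollary 4.6 of \cite{FQ} for the Wick expansion of $Y_V(a_1(-m_1-1/2)\cdots a_r(-m_r-1/2)\one,x)\,b_1(-n_1-1/2)\cdots b_s(-n_s-1/2)\one$ on $V$, then apply $\bar Y_W(\,\cdot\,,y)$ termwise and invoke the formal Taylor theorem to convert each $a$-field into $a_i^{(m_i)}(y+x)$. One correction to your last paragraph: the vertex operator $Y_V$ on $V$ is \emph{not} modified by $\exp(\Delta(x))$; it is already the normal-ordered product $:a_1^{(m_1)}(x)_V\cdots a_r^{(m_r)}(x)_V:$, so Corollary 4.6 of \cite{FQ} directly gives the cross-contraction determinant with no self-contractions to worry about. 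The $\exp(\Delta(x))$-correction appears only in the definition of $Y_W$ on the twisted module, which is not involved in this corollary about $\bar Y_W$.
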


\begin{proof}
    Recall from Corollary 4.6 in \cite{FQ} that
       \begin{align*}
        & Y_V(a_1(-m_1-1/2)\cdots a_r(-m_r-1/2)\one, x)b_1(-n_1-1/2)\cdots b_s(-n_s-1/2)\one \\
        = \ & (a_1^{(m_1)})_V(x)^-\cdots (a_r^{(m_r)})_V(x)^-b_1(-n_1-1/2)\cdots b_s(-n_s-1/2)\one\nonumber \\
        & + \sum_{\rho=1}^{\min(r,s)}\sum_{\substack{ 1 \leq i_1 < \cdots < i_\rho\leq r\\ 1 \leq j_1 < \cdots < j_\rho \leq s}}(-1)^{i_1+\cdots + i_\rho + j_1+\cdots + j_\rho }(-1)^{r\rho + \rho(\rho+1)/2}\nonumber \\
        & \hspace{9.7 em}\cdot \begin{vmatrix}
            (a_{i_1}, b_{j_1})(x^{-n_{j_1}-1})^{(m_{i_1})} & \cdots & (a_{i_1}, b_{j_\rho})(x^{-n_{j_\rho}-1})^{(m_{i_1})}\\
            \vdots & & \vdots\nonumber  \\
            (a_{i_\rho}, b_{j_1})(x^{-n_{j_1}-1})^{(m_{i_\rho})} & \cdots & (a_{i_\rho}, b_{j_\rho})(x^{-n_{j_\rho}-1})^{(m_{i_\rho})} 
        \end{vmatrix}\nonumber \\
        & \hspace{10 em}\cdot (a_{1}^{(m_1)})_V(x)^- \cdots \reallywidehat{(a_{i_1}^{(m_{i_1})})_V(x)^-} \cdots \reallywidehat{(a_{i_\rho}^{(m_{i_\rho})})_V(x)^-} \cdots (a_{r}^{(m_r)})_V(x)^-\nonumber  \\
        & \hspace{10 em}\cdot b_{1}(-n_1/1-2) \cdots \reallywidehat{b_{j_1}(-n_{j_1}-1/2)} \cdots \reallywidehat{b_{j_\rho}(-n_{j_\rho}-1/2)} \cdots b_{s}(-n_s-1/2)\one
    \end{align*}
    We first compute
    \begin{align}
        & \bar Y_W((a_1^{(m_1)})_V(x)^- \cdots (a_r^{(m_r)})_V(x)^- b_1(-n_1-1/2)\cdots b_s(-n_s-1/2)\one, y)\label{Iterate-1} \\
        = \ & \sum_{i_1, ..., i_r\geq 0} \bar Y_W(a_1(-i_1-1/2)\cdots a_r(-i_r-1/2)b_1(-n_1-1/2)\cdots b_s(-n_s-1/2)\one, y)(x^{i_1})^{(m_1)}\cdots (x^{i_r})^{(m_r)}\nonumber\\
        = \ & \sum_{i_1, ..., i_r\geq 0} \nord a_1^{(i_1)}(y)\cdots  a_r^{(i_r)}(y)b_1^{(n_1)}(y)\cdots b_s^{(n_s)}(y)\nord (x^{i_1})^{(m_1)}\cdots (x^{i_r})^{(m_r)}\nonumber\\
        = \ & \nord \frac{1}{m_1!}\frac{\partial^{m_1}}{\partial x^{m_1}}\left(\sum_{i_1\geq 0} \frac{x^{i_1}}{i_1!}\frac{\partial^{i_1}}{\partial y^{i_1}}a_1(y)\right) \cdots \frac{1}{m_r!}\frac{\partial^{m_r}}{\partial x^{m_r}}\left(\sum_{i_r\geq 0} \frac{x^{i_r}}{i_r!}\frac{\partial^{i_r}}{\partial y^{i_r}}a_r(y)\right) b_1^{(n_1)}(y)\cdots b_s^{(n_s)}(y)\nord \nonumber
    \end{align}
    By the formal Taylor theorem, 
    $$\sum_{i\geq 0} \frac{x^i}{i_1!}\frac{\partial}{\partial x}a(y) = a(y+x), $$
    thus
    \begin{align*}
        (\ref{Iterate-1}) = \ &  \nord \frac{1}{m_1!}\frac{\partial^{m_1}}{\partial x^{m_1}}\left(a_1(y+x)\right) \cdots \frac{1}{m_r!}\frac{\partial^{m_r}}{\partial x^{m_r}}\left(a_r(y+x)\right)b_1^{(n_1)}(y)\cdots b_s^{(n_s)}(y)\nord     
    \end{align*}
    Noticing from chain rule that 
    $$\frac{\partial }{\partial x} = \frac{\partial}{\partial(y+x)}\cdot \frac{\partial(y+x)}{\partial x} = \frac{\partial}{\partial(y+x)}, $$
    we conclude that 
    \begin{align*}
        (\ref{Iterate-1}) = \ &  \nord a_1^{(m_1)}(y+x) \cdots a_r^{(m_r)}(y+x) b_1^{(n_1)}(y)\cdots b_s^{(n_s)}(y)\nord 
    \end{align*}
    By variation of the choices of $a_1, ..., a_r$ and $m_1, ..., m_r$, we may conclude the proof of the corollary. 
\end{proof}

We see that the determinants in the product formula involves the algebraic function $G_{m_in_j}(x_i,y_j)$, while the determinants in the iterate formula involves the rational function $(x^{-n_j-1})^{(m_i)}$. So weak associativity does not hold with the na\"ive vertex operators $\bar Y_W$. It is necessary to introduce a correction. 

\section{The actual vertex operator}

Recall that $\h = \mathfrak{p} \oplus \mathfrak{q}$ is polarized as a direct sum of maximal isotropic subspaces. Let $e_1, ..., e_M$ be a basis of $\mathfrak{p}$, $\bar e_1, ..., \bar e_M$ be a basis of $\mathfrak{q}$ such that 
$$(e_i, \bar e_j) = \delta_{ij}, (e_i, e_j) = (\bar e_i, \bar e_j) = 0, i,j = 1, ..., M. $$
Using the idea from \cite{FLM} and \cite{FFR}, we define the actual vertex operator as a correction from the na\"ive vertex operator by the exponential of a series $\Delta(x)$, i.e., 
i.e., 
$$Y_W(u, x) = \bar Y_W (\exp (\Delta(x)) u, x)$$
where 
$$\Delta(x) = \sum_{i=1}^M\sum_{m,n \geq 0}C_{mn} e_i(m+1/2) \bar e_i(n+1/2) x^{-m-n-1}. $$
To make sure that $\Delta(x)$ is independent of the choice of polarizations and basis, from the anti-commutativity of the positive modes, it is necessary and sufficient that $C_{mn} = - C_{nm}$.

\subsection{$\exp(\Delta(x))$ on a basis element of $V$}

\begin{nota}\label{Index-Notation-1}
    To avoid the clumsy iterated subscripts, for $a_1, ...., a_r\in \h$, $m_1, ..., m_r\in \N$ and $1\leq i_1< ...< i_{k}\leq r$, we introduce the following notations. 
    \begin{enumerate}
        \item For $1\leq p < q \leq k$, the notation $$\langle i_p, i_q \rangle =(a_{i_p}, a_{i_q}) C_{m_{i_p}m_{i_q}}$$ 
        stands for a number in $\C$. 
        \item The notation 
        $$S_{a_1,... a_r}^{m_1, ..., m_r}(i_1, ..., i_k) = a_{i_1}(-m_{i_1}-1/2)\cdots a_{i_k}(-m_{i_k}-1/2)\one$$
        stands for an element in $V$
        \item The notation
        $$S(x)_{a_1,... a_r}^{m_1, ..., m_r}(i_1, ..., i_k) = a_{i_1}^{(m_{i_1})}(x)^-\cdots a_{i_k}^{(m_{i_k})}(x)^-$$
        stands for a series in $\End(V)[[x]]$. 
    \end{enumerate}
    These notations will be used in the proofs and in the computations. We will not use them in the statements of the definitions, lemmas, propositions and theorems. 
\end{nota}

\begin{defn}
    Let $a_1, ..., a_r\in \h, m_1, ..., m_r\in \N$. Then for each $t\in \Z_+$ increasing sequence $1\leq i_1 < \cdots < i_{2t}$, we define the \textit{total contraction number} $T_{a_1, ..., a_r}^{m_1, ..., m_r}(i_1, ..., i_{2t})$ recursively by
    \begin{align*}
    T_{a_1,...,a_r}^{m_1,...,m_r}(i_1, i_2) &= (a_{i_1},a_{i_2})C_{m_{i_1}m_{i_2}}\\
    T_{a_1,...,a_r}^{m_1,...,m_r}(i_1,..., i_{2t}) &= \sum_{k=2}^{2t} (-1)^{k}(a_{i_1}, a_{i_k}) C_{m_{i_1}m_{i_k}} T^{m_1\cdots m_r}_{a_1,...,a_r}(\widehat{i_1}, i_2, ..., \widehat{i_k}, i_{k+1}, ..., i_{2t}). 
\end{align*}
\end{defn}

Recall Corollary 5.7 in \cite{FQ}:

\begin{lemma}
    For $a_1, ..., a_r\in \h, m_1, ..., m_r\in \N, $
    \begin{align*}
        & \exp(\Delta(x)) a_1(-m_1-1/2)\cdots a_r(-m_r-1/2)\one \\
        = \ & \sum_{t=0}^\infty \sum_{1\leq i_1 < \cdots < 2t \leq r} (-1)^{i_1 + \cdots + i_{2t}} T_{a_1, .. a_r}^{m_1, ..., m_r}(i_1, ..., i_{2t})\\
        & \qquad \qquad \qquad \cdot a_1(-m_1-1/2)\cdots \reallywidehat{a_{i_1}(-m_{i_1}-1/2)}\cdots \reallywidehat{a_{i_{2t}}(-m_{i_{2t}}-1/2)} \cdots a_{r}(-m_r-1/2)\one. 
    \end{align*}
\end{lemma}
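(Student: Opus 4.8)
The plan is to induct on $r$, using a conjugation identity that commutes $\exp(\Delta(x))$ past the leftmost negative mode. First I would record that, because $\Delta(x)$ is quadratic in the positive (annihilation) modes, its single commutator with a negative mode is again linear in annihilation modes: a direct computation using the anti-commutation relation $\{a(m+1/2), b(-n-1/2)\} = (a,b)\delta_{mn}$, the antisymmetry $C_{mn} = -C_{nm}$, and the resolution of the identity $\sum_{i=1}^M[(e_i, a)\bar e_i + (\bar e_i, a)e_i] = a$, yields
$$[\Delta(x), a_1(-m_1-1/2)] = -\sum_{n\geq 0}C_{m_1 n}\, a_1(n+1/2)\, x^{-m_1-n-1} =: B_1(x).$$
Since $B_1(x)$ is a sum of positive modes and positive modes anti-commute (Proposition \ref{pos-mode-prop} and its analogue on $V$), one has $[\Delta(x), B_1(x)] = 0$, so the commutator series terminates and $\exp(\Delta(x))\,a_1(-m_1-1/2) = (a_1(-m_1-1/2) + B_1(x))\exp(\Delta(x))$.

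The inductive step then substitutes the $(r-1)$-fold formula into the right-hand side of
$$\exp(\Delta(x))\,a_1(-m_1-1/2)\cdots a_r(-m_r-1/2)\one = (a_1(-m_1-1/2) + B_1(x))\,\exp(\Delta(x))\,a_2(-m_2-1/2)\cdots a_r(-m_r-1/2)\one.$$
The term $a_1(-m_1-1/2)$ merely prepends an uncontracted mode, producing exactly those terms of the claimed formula in which the index $1$ is absent from the contraction set $\{i_1,\ldots,i_{2t}\}$. The operator $B_1(x)$, being an annihilation operator of $a_1$, anti-commutes rightward through the surviving negative modes and contracts with precisely one of them, say the one of original index $i_k$, contributing the factor $(a_1, a_{i_k})C_{m_1 m_{i_k}} = \langle 1, i_k\rangle$; summing over the partner $i_k$ of the index $1$ reproduces the $i_1 = 1$ term of each new contraction set. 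Matching these two families against the recursive definition of the total contraction number then reduces to recognizing that the matrix $(\langle i_p, i_q\rangle)$ is antisymmetric (the form is symmetric while $C$ is antisymmetric) and that its defining recursion $T(i_1,\ldots,i_{2t}) = \sum_{k=2}^{2t}(-1)^k\langle i_1, i_k\rangle\, T(\widehat{i_1}, i_2, \ldots, \widehat{i_k}, \ldots, i_{2t})$ is exactly the expansion of the Pfaffian of $(\langle i_p, i_q\rangle)$ along its first row, so that $B_1(x)$ contracting index $1$ against each admissible partner regenerates $T$ from its value on the smaller set.

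The step I expect to be the main obstacle is the sign accounting, namely matching $B_1(x)$'s contraction to the signs $(-1)^k$ and the global $(-1)^{i_1 + \cdots + i_{2t}}$ in the statement. When $B_1(x)$ moves rightward to contract with the mode of index $i_k$, it passes only the negative modes that survive in a given term of $\exp(\Delta(x))a_2\cdots a_r\one$ — the modes already removed by earlier contractions must be skipped — so the fermionic sign it collects is term-dependent, whereas the recursion assigns the clean positional sign $(-1)^k$. I would resolve this by showing that the global factor $(-1)^{i_1 + \cdots + i_{2t}}$ absorbs the discrepancy between the number of surviving modes passed and the position in the sorted contraction sequence, exactly as in the corresponding computation in \cite{FQ}; once this reconciliation is carried out the induction closes. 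In practice, since the statement is Corollary 5.7 of \cite{FQ}, I would simply cite it, the argument above being the route taken there.
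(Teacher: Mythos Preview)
Your proposal is correct. The paper itself gives no proof here at all: it simply writes ``Recall Corollary 5.7 in \cite{FQ}'' and states the lemma, so the intended ``proof'' is a bare citation. Your sketch of the inductive argument via the conjugation identity $\exp(\Delta(x))a_1(-m_1-1/2) = (a_1(-m_1-1/2)+B_1(x))\exp(\Delta(x))$ is the standard route and, as you note, is the approach taken in \cite{FQ}; your closing sentence that you would simply cite the result matches exactly what the paper does.
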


\subsection{Choosing the constants $C_{mn}$}
To understand the choice of $C_{mn}$, we first compute the product and iterate of the vertex operators associatied with for $a(-1/2)\one, b(-1/2)\one\in V$. 
\begin{align*}
    & Y_W(a(-m-1/2)\one, x+y)Y_W(b(-n-1/2)\one, y) \\
    = \ & \bar Y_W(a(-m-1/2)\one, x+y) \bar Y_W(b(-n-1/2)\one, y) \\
    = \ & \nord a^{(m)}(x+y) b^{(n)}(y)\nord + (a,b) \iota_{xy}g_{mn}(x+y,y)
\end{align*}
while 
\begin{align*}
    & Y_W(Y_V(a(-m-1/2)\one, x)b(-n-1/2)\one, y) \\
    = \ & \bar Y_W(Y_V(a(-m-1/2)\one, x)b(-n-1/2)\one, y) + \bar Y_W(\Delta(y) Y_V(a(-m-1/2)\one, x)b(-n-1/2)\one, y)\\
    = \ & \nord a^{(m)}(y+x) b^{(n)}(y)\nord + (a,b) (x^{-n-1})^{(m)} + \bar{Y}_W\left(\Delta(y)\sum_{p\geq 0} a(-p-1/2)b(-n-1/2)\one x^{p}, y\right)\\
    = \ & \nord a^{(m)}(y+x) b^{(n)}(y)\nord + (a,b) (x^{-n-1})^{(m)} + \bar{Y}_W\left(\sum_{p\geq 0} (-1)^{1+2}C_{pn}(a, b) y^{-p-n-1}\one (x^{p})^{(m)}, y\right)\\
    = \ & \nord a^{(m)}(y+x) b^{(n)}(y)\nord + (a,b) \left((x^{-n-1})^{(m)} + \sum_{p\geq 0} C_{np} y^{-p-n-1} \binom{p}{m}x^{p-m}\right)\\
    = \ & \nord a^{(m)}(y+x) b^{(n)}(y)\nord + (a,b) \left((x^{-n-1})^{(m)} + y^{-m-n-1} \sum_{p\geq 0} C_{n,p+m}  \binom{p+m}{m}\left(\frac x y\right)^{p}\right)
\end{align*}
The associativity axiom basically requires that 
\begin{align}
    (x^{-n-1})^{(m)} + y^{-n-m-1}\sum_{p\geq 0} C_{n,p+m}\binom{p+m}{m} \left(\frac{x}{y}\right)^p= \iota_{yx}g_{mn}(y+x,y)\label{C_mn-g-formula}
\end{align}
which is the same equation required in Lemma 3.31 in \cite{FFR}. Thus $C_{mn}$ should be chosen in the same way as in \cite{FFR}, Formula (3.55) and (3.56), namely, 
$$C_{mn}=\frac 1 2 \frac {m-n}{m+n+1} \binom{-1/2}{m} \binom{-1/2}{n}.$$
Moreover, the identity
\begin{align}
    \sum_{m=0}^k \binom{m+r} r \binom{k-m+t} t C_{(m+r)(k-m+t)} = \binom{-r-t-1}k C_{rt}\label{C_rt-formula}
\end{align}
holds (cf. Lemma 3.9, \cite{FFR}). 

With such choice of $C_{mn}$, we enhance Proposition 5.8 in \cite{FQ} as follows

\begin{prop}\label{Exp-Delta-Neg-Comm}
    For $a\in \h, m\in \N$, 
    \begin{align}
        [\exp(\Delta(y)), a^{(m)}(x)_V^-] &= \sum_{\alpha\geq 0}\sum_{\beta\geq 0}C_{\beta\alpha}a(\beta + 1/2)y^{-\beta-\alpha-1}(x^\alpha)^{(m)}\label{Exp-Delta-Neg-Comm-1} \exp(\Delta(y))\\
        &= \sum_{\beta\geq 0}a(\beta+1/2)\left(\iota_{yx}g_{m\beta}(y+x, y) - (x^{-\beta-1})^{(m)}\right)\exp(\Delta(y))\label{Exp-Delta-Neg-Comm-2}
    \end{align}
\end{prop}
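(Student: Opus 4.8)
The plan is to establish the two displayed equalities in turn. The first equality (\ref{Exp-Delta-Neg-Comm-1}) is essentially Proposition 5.8 of \cite{FQ}, which holds for any antisymmetric family of constants $C_{mn}$; the second equality (\ref{Exp-Delta-Neg-Comm-2}) is the genuine enhancement, and it is the only place where the specific choice of $C_{mn}$ made just above enters. So most of the work is the observation that (\ref{Exp-Delta-Neg-Comm-1}) can be rewritten using the defining relation (\ref{C_mn-g-formula}) for the constants.

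For the first equality I would recall why the commutator with the exponential collapses. Since $\Delta(y)$ is a sum of products of two positive (annihilation) modes, it is an even operator and $[\Delta(y),\cdot]$ is a derivation. Applying it to a single creation mode $a(-n-1/2)$ and using the fermionic anti-commutation relation on $V$ from \cite{FQ} together with $[AB,C]=A\{B,C\}-\{A,C\}B$, the sum over the polarization basis collapses via $\sum_i(\bar e_i,a)e_i+(e_i,a)\bar e_i=a$, yielding $[\Delta(y),a(-n-1/2)]=\sum_{k\geq 0}C_{kn}\,y^{-k-n-1}a(k+1/2)$, a linear combination of positive modes. By the analogue on $V$ of Proposition \ref{pos-mode-prop}, these positive modes anti-commute to zero among themselves and with the constituents of $\Delta(y)$, so the double commutator $[\Delta(y),[\Delta(y),a(-n-1/2)]]$ vanishes. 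Hence $\exp(\Delta(y))a(-n-1/2)\exp(-\Delta(y))=a(-n-1/2)+[\Delta(y),a(-n-1/2)]$, which gives $[\exp(\Delta(y)),a(-n-1/2)]=[\Delta(y),a(-n-1/2)]\exp(\Delta(y))$. Packaging the modes into the generating function $a^{(m)}(x)_V^-=\sum_{n\geq 0}a(-n-1/2)(x^n)^{(m)}$ and relabelling the indices produces (\ref{Exp-Delta-Neg-Comm-1}).

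For the second equality I would extract the coefficient of each positive mode $a(\beta+1/2)$ on both sides; it then suffices to show, for every fixed $\beta\geq 0$,
$$\sum_{\alpha\geq 0} C_{\beta\alpha}\, y^{-\beta-\alpha-1}(x^\alpha)^{(m)} = \iota_{yx}g_{m\beta}(y+x,y) - (x^{-\beta-1})^{(m)}.$$
Since $(x^\alpha)^{(m)}=\binom{\alpha}{m}x^{\alpha-m}$ vanishes for $\alpha<m$, I reindex the left-hand side by $\alpha=p+m$ with $p\geq 0$, obtaining
$$\sum_{p\geq 0} C_{\beta,\,p+m}\binom{p+m}{m} y^{-\beta-m-1-p} x^p.$$
This is exactly the right-hand side of the defining relation (\ref{C_mn-g-formula}) with $n$ replaced by $\beta$, simply rearranged to isolate the combination $\iota_{yx}g_{m\beta}(y+x,y)-(x^{-\beta-1})^{(m)}$. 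Thus the second equality is immediate from the way $C_{mn}$ was chosen, and no further manipulation is required.

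I expect the only delicate point to be the first equality, specifically the sign bookkeeping in the fermionic commutator and the verification that the result of one commutator is a combination of positive modes so that the double commutator vanishes and the exponential series truncates. The enhancement itself is then a one-line reindexing once (\ref{C_mn-g-formula}) is available.
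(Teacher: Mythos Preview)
Your proposal is correct and matches the paper's approach: the paper does not give a proof of this proposition at all, merely stating it as an enhancement of Proposition 5.8 of \cite{FQ} immediately after establishing (\ref{C_mn-g-formula}), with the tacit understanding that the first equality is the cited result and the second follows from (\ref{C_mn-g-formula}). You have supplied exactly those details, including the reindexing $\alpha=p+m$ that makes the match with (\ref{C_mn-g-formula}) transparent.
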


\subsection{Iterate formula}

Recall from Corollary 4.6 of \cite{FQ} that 
\begin{align*}
        & :a_1^{(m_1)}(x)_V\cdots a_r^{(m_r)}(x)_V:b_1(-n_1-1/2)\cdots b_s(-n_s-1/2)\one \\
        = \ & \sum_{\rho=0}^{\min(r,s)}\sum_{\substack{ 1 \leq i_1 < \cdots < i_\rho\leq r\\ 1 \leq j_1 < \cdots < j_\rho \leq s}}(-1)^{i_1+\cdots + i_\rho + j_1+\cdots + j_\rho }(-1)^{r\rho + \rho(\rho+1)/2}\nonumber \\
        & \hspace{9.7 em}\cdot \begin{vmatrix}
            (a_{i_1}, b_{j_1})(x^{-n_{j_1}-1})^{(m_{i_1})} & \cdots & (a_{i_1}, b_{j_\rho})(x^{-n_{j_\rho}-1})^{(m_{i_1})}\\
            \vdots & & \vdots\nonumber  \\
            (a_{i_\rho}, b_{j_1})(x^{-n_{j_1}-1})^{(m_{i_\rho})} & \cdots & (a_{i_\rho}, b_{j_\rho})(x^{-n_{j_\rho}-1})^{(m_{i_\rho})} 
        \end{vmatrix}\nonumber \\
        & \hspace{10 em}\cdot (a_{1}^{(m_1)})_V(x)^- \cdots \reallywidehat{(a_{i_1}^{(m_{i_1})})_V(x)^-} \cdots \reallywidehat{(a_{i_\rho}^{(m_{i_\rho})})_V(x)^-} \cdots (a_{r}^{(m_r)})_V(x)^-\nonumber  \\
        & \hspace{10 em}\cdot b_{1}(-n_1/1-2) \cdots \reallywidehat{b_{j_1}(-n_{j_1}-1/2)} \cdots \reallywidehat{b_{j_\rho}(-n_{j_\rho}-1/2)} \cdots b_{s}(-n_s-1/2)\one
    \end{align*}

    \begin{thm}
        For $r, s \in \N, a_1, ..., a_r, b_1, ..., b_s\in \h, m_1, ..., m_r, n_1, ..., n_s\in \N$, 
        \begin{align}
        & \exp(\Delta(y)):a_1^{(m_1)}(x)_V\cdots a_r^{(m_r)}(x)_V: b_1(-n_1-1/2)\cdots b_s(-n_s-1/2)\one \nonumber \\
        = \ & \sum_{k= 0}^\infty \sum_{\substack{1\leq p_1 < \cdots < p_{2k} \leq r}}  (-1)^{p_1+\cdots + p_{2k}}T_{a_1, ..., a_r}^{m_1,...,m_r}(p_1, ..., p_{2k})\iota_{yx}(y+x)^{-m_{p_1}-\cdots - m_{p_{2k}} - k} \nonumber \\
        & \quad \cdot \sum_{l= 0}^\infty \sum_{\substack{
        1\leq q_1 < \cdots < q_{2l} \leq s}}(-1)^{q_1+\cdots + q_{2l}}T_{b_1, ..., b_s}^{n_1, ..., n_s}(q_1, ..., q_{2l}) y^{-n_{q_1}-\cdots - n_{q_{2l}} - l}\nonumber \\
        & \qquad \cdot \sum_{\rho=0}^{\min\{r-2k, s-2l\}}\sum_{\substack{1\leq i_1 < \cdots < i_\rho\leq r-2k \\
        1\leq j_1 < \cdots < j_\rho \leq s-2l}}(-1)^{i_1+\cdots + i_{\rho} + j_1 + \cdots + j_\rho} (-1)^{(r-2k)\rho + \frac{\rho(\rho+1)}{2}}\nonumber \\
        & \qquad\qquad \qquad \cdot \begin{vmatrix}
            (a_{p_{i_1}^c}, b_{q_{j_1}^c})\iota_{yx}g_{m_{p_{i_1}^c}n_{q_{j_1}^c}}(x+y, y) & \cdots & (a_{p_{i_1}^c}, b_{q_{j_\rho}^c})\iota_{yx}g_{m_{p_{i_1}^c}n_{q_{j_\rho}^c}}(x+y, y)\\
            \vdots & & \vdots \\
            (a_{p_{i_\rho}^c}, b_{q_{j_1}^c})\iota_{yx}g_{m_{p_{i_\rho}^c}n_{q_{j_1}^c}}(x+y, y) & \cdots & (a_{p_{i_\rho}^c}, b_{q_{j_\rho}^c})\iota_{yx}g_{m_{p_{i_\rho}^c}n_{q_{j_\rho}^c}}(x+y, y)
        \end{vmatrix}\nonumber \\
        & \qquad\qquad \qquad \cdot a_{p_1^c}^{(m_{p_1^c})}(x)_V^-\cdots \reallywidehat{a_{p_{i_1}^c}^{(m_{p_{i_1}^c})}(x)_V^-} \cdots \reallywidehat{a_{p_{i_\rho}^c}^{(m_{p_{i_\rho}^c})}(x)_V^-} \cdots a_{p_{r-2k}^c}(x)_V^- \nonumber \\
        & \qquad \qquad \qquad \cdot b_{q_1^c}(-n_{q_1^c}-1/2)\cdots \reallywidehat{b_{q_{j_1}^c}(-n_{q_{j_1}^c}-1/2)}\cdots\reallywidehat{b_{q_{j_\rho}^c}(-n_{q_{j_\rho}^c}-1/2)}\cdots b_{q_{s-2l}^c}(-n_{q_{s-2l}^c}-1/2)\one \label{exp-Delta-A-B}
    \end{align}
    as a series in $V[[x, x^{-1}, y, y^{-1}]]$. Here for $(p_1, \cdots , p_{2k})$, $(q_1, ..., q_{2l})$, $(i_1, ..., i_\rho)$ and $(j_1, ..., j_\rho)$, the sequences $1\leq p_1^c < \cdots < p_{r-2k}^c \leq r$, $1\leq q_1^c < \cdots < q_{s-2l}^c\leq s$, $1\leq i_1^c < \cdots < i_{r-2k-\rho} \leq r-2k$ and $1\leq j_1^c < \cdots < j_{s-2l-\rho}^c \leq s-2l$ are their complementary sequence, respectively. 
\end{thm}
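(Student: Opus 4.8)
The plan is to derive \eqref{exp-Delta-A-B} by combining the three ingredients assembled above: the normal-ordered product formula of Corollary 4.6 of \cite{FQ} (recalled just before the statement), the total-contraction formula for $\exp(\Delta)$ acting on a creation state (Corollary 5.7 of \cite{FQ}, recalled earlier as a lemma), and the commutator formula of Proposition \ref{Exp-Delta-Neg-Comm}. First I would apply Corollary 4.6 to rewrite the inner expression $:a_1^{(m_1)}(x)_V\cdots a_r^{(m_r)}(x)_V:b_1(-n_1-1/2)\cdots b_s(-n_s-1/2)\one$ as a finite sum, graded by the cross-contraction rank $\rho$, in which each summand is a scalar determinant in the \emph{rational} functions $(x^{-n_j-1})^{(m_i)}$ times a product of the surviving negative-mode series $a^{(\cdot)}(x)_V^-$ acting on the surviving creation operators $b_{q^c}(-n_{q^c}-1/2)\cdots\one$. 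Since $\exp(\Delta(y))$ is $\C$-linear and commutes with every scalar coefficient in $\C[x,x^{-1}]$, it suffices to analyze its action on each surviving factor separately; and since, in any fixed total degree in $x$ and $y$, only finitely many contractions contribute, all the rearrangements below are legitimate identities in $V[[x,x^{-1},y,y^{-1}]]$.

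Next I would move $\exp(\Delta(y))$ rightward across the surviving negative-mode series, iterating Proposition \ref{Exp-Delta-Neg-Comm}. Each application produces a pass-through term $a^{(m)}(x)_V^-\exp(\Delta(y))$ together with a commutator term converting a negative-mode series into a positive mode $a(\beta+1/2)$ carrying the coefficient $\iota_{yx}g_{m\beta}(y+x,y)-(x^{-\beta-1})^{(m)}$. When $\exp(\Delta(y))$ finally reaches the block $b_{q^c}(-n_{q^c}-1/2)\cdots\one$, Corollary 5.7 of \cite{FQ} converts it into the $b$--$b$ total contractions, producing the factor indexed by $(q_1,\dots,q_{2l})$ with $T_{b_1,\dots,b_s}^{n_1,\dots,n_s}$ and the powers $y^{-n_{q_1}-\cdots-n_{q_{2l}}-l}$. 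The positive modes $a(\beta+1/2)$ generated en route then either anticommute into the $b$-creation operators, contracting an $a$ against a $b$, or pair against a second surviving negative $a$-series, contracting an $a$ against an $a$; in both cases the fermionic signs incurred while moving the positive mode into position are precisely those of the Wick expansions in Theorem \ref{Product-r-s-thm}.

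The heart of the argument, and the step I expect to be the main obstacle, is reconciling these three kinds of contractions with the closed form \eqref{exp-Delta-A-B}. The $b$--$b$ self contractions are the cleanest, since they come straight from Corollary 5.7 of \cite{FQ}. The $a$--$a$ self contractions arise instead from the commutator coefficient $\iota_{yx}g_{m\beta}(y+x,y)-(x^{-\beta-1})^{(m)}$, which by \eqref{C_mn-g-formula} is exactly the $\Delta$-correction term $y^{-\beta-m-1}\sum_p C_{\beta,p+m}\binom{p+m}{m}(x/y)^p$; summing this against the mode expansion of a second negative-mode series and invoking the binomial identity \eqref{C_rt-formula} collapses the internal sums into $T_{a_1,\dots,a_r}^{m_1,\dots,m_r}(p_1,\dots,p_{2k})\,\iota_{yx}(y+x)^{-m_{p_1}-\cdots-m_{p_{2k}}-k}$, the shifted analogue of the $b$--$b$ case. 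The genuinely delicate point is the $a$--$b$ contractions: the rank-$\rho$ rational cross-contraction $(x^{-n_j-1})^{(m_i)}$ of Corollary 4.6 must combine with the rational part of the $\Delta$-commutator produced from the rank-$(\rho-1)$ term, so that the two telescope across adjacent ranks into the single algebraic determinant entry $\iota_{yx}g_{m_{p^c}n_{q^c}}(x+y,y)$, exactly as in the rank-one check leading to \eqref{C_mn-g-formula}. The remaining work is purely combinatorial: verifying that the accumulated transposition signs assemble into the prefactors $(-1)^{p_1+\cdots+p_{2k}}$, $(-1)^{q_1+\cdots+q_{2l}}$, $(-1)^{i_1+\cdots+i_\rho+j_1+\cdots+j_\rho}$ and $(-1)^{(r-2k)\rho+\rho(\rho+1)/2}$, and that the contracted indices split off the complementary sequences $p^c,q^c,i^c,j^c$. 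I would organize this bookkeeping by induction on $r$, peeling off $a_1$ and matching its three possible roles --- self-contracted, cross-contracted, or free --- against the recursion defining $T_{a_1,\dots,a_r}^{m_1,\dots,m_r}$, so that the sign analysis of Theorem \ref{Product-r-s-thm} and Corollary \ref{Iterate-thm} can be reused verbatim.
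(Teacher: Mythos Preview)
Your proposal is essentially correct, and in its last paragraph it converges to exactly the inductive scheme the paper uses: induction on $r$, peeling off $a_1$ and distinguishing its three roles (free, self-contracted against another $a$, cross-contracted against a $b$).

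There is, however, a genuine organizational difference worth noting. You front-load Corollary~4.6 of \cite{FQ}, expanding the inner normal-ordered product into a sum over cross-contraction ranks $\rho$ with \emph{rational} determinant entries $(x^{-n_j-1})^{(m_i)}$, and then plan to telescope these against the rational piece of the $\Delta$-commutator coming from the rank-$(\rho-1)$ term so that only the algebraic entries $\iota_{yx}g_{m,n}(y+x,y)$ survive. The paper never carries out this cross-rank telescoping explicitly. Instead it keeps $:a_1^{(m_1)}(x)_V\cdots a_r^{(m_r)}(x)_V:$ intact, uses the recurrence of Proposition~3.10 of \cite{FQ} to write it as $a_1^{(m_1)}(x)_V^-\,{:}a_2\cdots a_r{:}+(-1)^{r-1}{:}a_2\cdots a_r{:}\,a_1^{(m_1)}(x)_V^+$, commutes $\exp(\Delta(y))$ past $a_1^-$ via Proposition~\ref{Exp-Delta-Neg-Comm}, and then applies the induction hypothesis --- the full statement of the theorem for $r-1$ --- to each of the three resulting pieces acting on $b_1\cdots b_s\one$. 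Because the induction hypothesis already contains the $g$-determinants, the rational-to-algebraic telescoping you anticipate never appears as a separate step: it is absorbed into matching the $a_1^+$ contribution against the rank-$\rho$ part of the commutator contribution via a row expansion of the determinant. The remaining work in the paper is the shuffle bookkeeping you allude to, organized through Lemma~\ref{Comb-Id-Lemma}, Remark~\ref{2-shuffle-summation}, and Lemma~\ref{2-shuffle-lemma}, showing that the three pieces are exactly the parts of the target sum with $\sigma_a(1)>1,\ \tau_a(2k+1)>2k+1$; with $\sigma_a(1)=1$; and with $\sigma_a(1)>1,\ \tau_a(2k+1)=2k+1$, respectively.

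Both routes work; the paper's buys a cleaner induction at the cost of heavier shuffle combinatorics inside the step, while yours makes the mechanism $(x^{-n-1})^{(m)}\rightsquigarrow \iota_{yx}g_{mn}(y+x,y)$ visible but requires tracking cancellations across different values of $\rho$ simultaneously.
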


\begin{proof}
    We first explain the shuffles hidden in the formula. 
    Note that for fixed $1\leq p_1 < \cdots < p_{2k} \leq r$ and $1\leq i_1 < \cdots < i_\rho \leq r-2k$, the sequences 
    \begin{align*}
        & 1\leq p_1 < \cdots < p_{2k}\leq r, 1\leq p_{i_1}^c < \cdots < p_{i_\rho}^c\leq r, 1\leq p_{i_1^c}^c < \cdots < p^c_{i_{r-2k-\rho}^c} \leq r
    \end{align*}
    indeed forms a 3-shuffle that is understood as an ``iterate'' of two 2-shuffles, namely, $\sigma\in J_{2k}(1, ..., r)$ satisfying 
    $$\sigma(\alpha) = p_\alpha, 1\leq \alpha\leq 2k, \sigma(2k+\alpha) = p_\alpha^c, 1\leq \alpha \leq r-2k$$ 
    and $\tau\in J_{\rho}(2k+1, ..., r)$ satisfying 
    $$\tau(2k+\alpha) = i_\alpha, 1 \leq \alpha \leq \rho, \tau(2k+\rho+\alpha) = i_\alpha^c, $$
    So we have 
    $$p_{i_1}^c = (\sigma\circ\tau)(2k+1), ..., p_{i_\rho}^c =(\sigma\circ\tau)(2k+\rho), $$    $$p_{i_1^c}^c = (\sigma\circ\tau)(2k+\rho+1), ...,  p_{i_{r-2k-\rho}^c}^c = (\sigma\circ\tau)(r). $$
    From Lemma \ref{2-shuffle-lemma}, we have
    $$(-1)^{p_1 + \cdots + 2k} = (-1)^\sigma (-1)^{\frac{2k(2k+1)}{2}} = (-1)^\sigma (-1)^{k}$$  $$(-1)^{i_1 + \cdots + i_\rho} = (-1)^\tau (-1)^{\frac{\rho(\rho+1)}{2}}. $$
    We shall also use the notations in Notation \ref{Index-Notation-1} to emphasize the role of the subscripts. With the discussion above, we may rewrite Formula (\ref{exp-Delta-A-B}) in terms of 2-shuffles: 
    \begin{align}
        & \exp(\Delta(y)):a_1^{(m_1)}(x)_V\cdots a_r^{(m_r)}(x)_V: b_1(-n_1-1/2)\cdots b_s(-n_s-1/2)\one \nonumber \\
        = \ & \sum_{k= 0}^\infty \sum_{\sigma_a\in J_{2k}(1, ..., r) }  (-1)^{\sigma_a}(-1)^{k}T_{a_1, ..., a_r}^{m_1,...,m_r}(\sigma_a(1), ..., \sigma_a(2k))\iota_{yx}(y+x)^{-m_{\sigma_a(1)}-\cdots - m_{\sigma_a(2k)} - k} \nonumber \\
        &  \cdot \sum_{l= 0}^\infty \sum_{\sigma_b\in J_{2l}(1, .., s)}(-1)^{\sigma_b}(-1)^{l}T_{b_1, ..., b_s}^{n_1, ..., n_s}(\sigma_b(1), ..., \sigma_b(2l)) y^{-n_{\sigma_b(1)}-\cdots - n_{\sigma_b(2l)} - l}\nonumber \\
        &  \cdot \sum_{\rho=0}^{\min\{r-2k, s-2l\}}\sum_{\substack{\tau_a\in J_\rho(2k+1,..., r)}}\sum_{\substack{\tau_b\in J_{\rho}(2l+1, ..., s)}}(-1)^{\tau_a}(-1)^{\tau_b}  (-1)^{r\rho + \frac{\rho(\rho+1)}{2}}\nonumber  \\
        &   \qquad \qquad \cdot \begin{vmatrix}
            \langle (\sigma_a\circ \tau_a)(2k+1), (\sigma_b\circ\tau_b)(2l+1)\rangle_g & \cdots & \langle (\sigma_a\circ \tau_a)(2k+1), (\sigma_b\circ\tau_b)(2l+\rho)\rangle_g\\
            \vdots & & \vdots \\
            \langle (\sigma_a\circ \tau_a)(2k+\rho), (\sigma_b\circ\tau_b)(2l+1)\rangle_g & \cdots & \langle (\sigma_a\circ \tau_a)(2k+\rho), (\sigma_b\circ\tau_b)(2l+\rho)\rangle_g
        \end{vmatrix}\nonumber\\
        & \qquad \qquad \cdot S(x)_{a_1, ..., a_r}^{(m_1, ..., m_r)}((\sigma_a\circ\tau_a)(2k+\rho+1), ..., (\sigma_a\circ \tau_a)(r)) \nonumber \\
        & \qquad \qquad \cdot S_{b_1, ..., b_s}^{n_1, ..., n_s}((\sigma_b\circ\tau_b)(2l+\rho+1), ..., (\sigma_b\circ\tau_b)(s))\label{exp-Delta-A-B-shuffle}
    \end{align}
    Now, we prove by induction. The conclusion clearly holds when $r=0$ and $s$ is arbitrary. Assume the conclusion for smaller $r$ and arbitrary $s$. Recall from Proposition 3.10 in \cite{FQ} that
    \begin{align*}
        :a_1^{(m_1)}(x)_V \cdots a_r^{(m_r)}(x)_V:  =\ &  a_1^{(m_1)}(x)_V^-:a_2^{(m_2)}(x)_V \cdots a_r^{(m_r)}(x)_V: \\
        & + (-1)^{r-1}:a_2^{(m_2)}(x)_V \cdots a_r^{(m_r)}(x)_V: a_1^{(m_1)}(x)_V^+
    \end{align*}
    Thus 
    \begin{align}
        & \exp(\Delta(y)):a_1^{(m_1)}(x)_V \cdots a_r^{(m_r)}(x)_V: \nonumber\\
        =\ &  \exp(\Delta(y))a_1^{(m_1)}(x)_V^-:a_2^{(m_2)}(x)_V \cdots a_r^{(m_r)}(x)_V: \nonumber\\
        & + (-1)^{r-1}\exp(\Delta(y)):a_2^{(m_2)}(x)_V \cdots a_r^{(m_r)}(x)_V: a_1^{(m_1)}(x)_V^+\nonumber\\
        =\ &  a_1^{(m_1)}(x)_V^-\exp(\Delta(y)) :a_2^{(m_2)}(x)_V \cdots a_r^{(m_r)}(x)_V: \label{exp-Delta-A-B-1}\\
        & + \sum_{\alpha\geq 0}\sum_{\beta\geq 0}C_{\beta\alpha}a(\beta + 1/2)y^{-\beta-\alpha-1}(x^\alpha)^{(m)}\exp(\Delta(y)) :a_2^{(m_2)}(x)_V \cdots a_r^{(m_r)}(x)_V:\label{exp-Delta-A-B-2}\\
        & + (-1)^{r-1}\exp(\Delta(y)):a_2^{(m_2)}(x)_V \cdots a_r^{(m_r)}(x)_V: a_1^{(m_1)}(x)_V^+\label{exp-Delta-A-B-3}
    \end{align}
    where we used the conclusion of Proposition \ref{Exp-Delta-Neg-Comm}. We apply each of (\ref{exp-Delta-A-B-1}), (\ref{exp-Delta-A-B-2}) and (\ref{exp-Delta-A-B-3}) to $b_1(-n_1-1/2) \cdots b_s(-n_s-1/2)\one$ and apply induction to see that they are parts of (\ref{exp-Delta-A-B-shuffle})

    \noindent \textbf{Application of (\ref{exp-Delta-A-B-1}): }We obtain 
    \begin{align*}
        & \sum_{k=0}^\infty \sum_{\sigma_a\in J_{2k}(2, ..., r)}(-1)^{\sigma_a}(-1)^{k}T_{a_1, a_2, ..., a_r}^{m_1, m_2, ..., m_r}(\sigma_a(2), ..., \sigma_a(2k+1))\iota_{yx}(y+x)^{-m_{\sigma_a(2)}-\cdots - m_{\sigma_a(2k+1)}-k}\\
        & \cdot \sum_{l=0}^\infty \sum_{\sigma_b\in J_{2l}(1, ..., s)}(-1)^{\sigma_b}(-1)^l T_{b_1, ..., b_s}^{n_1, ..., n_s}(\sigma_b(1), ..., \sigma_b(2l))y^{-n_{\sigma_b(1)}-\cdots - n_{\sigma_b(2l)}-l}\\
        & \cdot \sum_{\rho=0}^{\min\{r-1-2k, s-2l\}}\sum_{\tau_a\in J_\rho(2k+2, ..., r)}\sum_{\tau_b\in J_\rho(2l+1, ..., s)}(-1)^{\tau_a}(-1)^{\tau_b} (-1)^{(r-1)\rho+\frac{\rho(\rho+1)}2}\\
        & \qquad \qquad\cdot  \begin{vmatrix}
            \langle (\sigma_a\circ \tau_a)(2k+2), \tilde{q}_{2l+1}\rangle_g & \cdots & \langle (\sigma_a\circ \tau_a)(2k+2), \tilde{q}_{2l+\rho}\rangle_g\\
            \vdots & & \vdots \\
            \langle (\sigma_a\circ \tau_a)(2k+1+\rho), \tilde{q}_{2l+1}\rangle_g & \cdots & \langle (\sigma_a\circ \tau_a)(2k+1+\rho), \tilde{q}_{2l+1}\rangle_g
        \end{vmatrix}\nonumber\\
        &\qquad \qquad  \cdot a_1^{(m_1)}(x)_V^- \cdot S(x)_{a_1, ..., a_{r}}^{m_1, ..., m_r}((\sigma_a\circ\tau_a)(2k+\rho+2), ...,(\sigma_a\circ\tau_a)(r))\\
        &\qquad \qquad \cdot S_{b_1, ..., b_s}^{n_1, ..., n_s}(\tilde{q}_{2l+\rho+1}, ..., \tilde{q}_{s})
    \end{align*}
    Here $\tilde{q}_\alpha$ is the abbreviation of $(\sigma_b\circ\tau_b)(\alpha)$ for $\alpha = 2l+1, ..., s$. This abbreviation is introduced only to make the formula shorter. Note that we should still use $T_{a_1, a_2, ..., a_r}^{m_1, m_2, ..., m_r}$ in the induction hypothesis. Otherwise, the indices $\sigma_a(\alpha)$ in the arguments must be shifted by 1, becoming $\sigma_a(\alpha) - 1$ for $\alpha = 2, ..., 2k+1$. 
    
    Conceptually, this expression should contribute to the part of (\ref{exp-Delta-A-B-shuffle}) where $1$ is placed in the third sequence of the corresponding 3-shuffle. We now proceed to justify this conceptual understanding. We first restrict our attention to the summand with fixed $l, \sigma_b, \rho, \tau_b$, then further restrict our attention to the summand with fixed $k$ and $\sigma_a$. 
    \begin{itemize}
        \item We may regard $\sigma_a$ as a permutation of $\{1, ..., r\}$ satisfies $\sigma_a(1) = 1$. Define $\sigma_a^{new}\in J_{2k}(1, ..., r)$ by 
    $$\sigma_a^{new} =\begin{pmatrix}
        1 & 2 & \cdots & 2k & 2k+1 & 2k+2 & \cdots & r\\
        \sigma_a(2) & \sigma_a(3) & \cdots & \sigma_a(2k+1) & 1 & \sigma_a(2k+2) & \cdots &\sigma_a(r)
    \end{pmatrix}$$
    Then $\sigma_a^{new}$ corresponds to two increasing sequences 
    $$\sigma_a(2)< \cdots < \sigma_a(2k+1), 1 < \sigma_a(2k+2) < \cdots < \sigma_a(r)$$
    where the first sequence coincides with that of $\sigma_a$, and the second series is modified from that of $\sigma_a$ by attaching 1 in the front. Moreover, 
    $$\sigma_a^{new} = \sigma_a \circ (2k+1, 2k,..., 1)\Rightarrow (-1)^{\sigma_a^{new}}=(-1)^{\sigma_a} (-1)^{2k} = (-1)^{\sigma_a}.$$
    Summing over $\sigma_a\in J_{2k}(2, ..., r)$ amounts to summing over $\sigma_a^{new}\in J_{2k}(1,..., r)$ with $\sigma_a^{new}(2k+1) = 1 \Leftrightarrow \colorbox{ffff00}{$\sigma_a^{new}(1) > 1$}$. 
    %\item The factor $T_{a_2,..., a_r}^{m_2, ..., m_r}(\sigma_a(2), ..., \sigma_a(2k+1))$ in the summation can then be rewritten as
    %$$T_{a_1, a_2,..., a_r}^{m_1, m_2, ..., m_r}(\sigma_a^{new}(1), ..., \sigma_a^{new}(2k))$$
    %Since $1$ does not appear in the list, the total contraction number will not contain any terms related to $a_1$ and $m_1$.
    \end{itemize}
    We now further focus on the summand for fixed $\tau_a$. 
    \begin{itemize}
        \item Similarly, we may regard $\tau_a\in J_\rho(2k+2, ..., r)$ as a permutation of $\{2k+1, 2k+2, ..., r\}$ such that $\tau_a(2k+1) = 2k+1$. Define $\tau_a^{new}\in J_{\rho}(2k+1, ..., r)$ by 
        \begin{align*}
            \tau_a^{new} = \begin{pmatrix}
                2k+1 & 2k+2 & \cdots & 2k+\rho & 2k+\rho+1 & 2k+\rho+2 & \cdots & r\\
                \tau_{a}(2k+2) & \tau_a(2k+3) & \cdots & \tau_a(2k+\rho+1) & 2k+1 & \tau_a(2k+\rho+2) & \cdots & \tau(r)
            \end{pmatrix}
        \end{align*}
        Then $\tau_a^{new}$ corresponds to two increasing series 
        $$\tau_a(2k+2)< \cdots < \tau_a(2k+\rho+1), 2k+1 < \tau_a(2k+\rho+2) < \cdots < \tau_a(r)$$
        where the first sequence coincides with that of $\tau_a$, and the second series is modified from that of $\tau_a$ by attaching $2k+1$ in the front. Moreover, 
        $$\tau_a^{new} = \tau_a \circ (2k+\rho+1, 2k+\rho, ..., 2k+1)\Rightarrow (-1)^{\tau_a^{new}} = (-1)^{\tau_a}(-1)^{\rho}. $$
        Summing over $\tau_a\in J_{\rho}(2k+2, ..., r)$ amounts to summing over $\tau_a^{new}\in J_{\rho}(2k+1, ..., r)$ with $\tau_a^{new}(2k+\rho+1) = 2k+1 \Leftrightarrow \colorbox{ffff00}{$\tau_a^{new}(2k+1)>2k+1$}$. 
        \item For $i=1, ..., \rho$, the $(\sigma_a\circ\tau_a)(2k+i+1)$ appearing in in the determinant now becomes 
        $(\sigma_a^{new} \circ \tau_a^{new})(2k+i)$. 
        \item For $i=\rho+1, ..., r-1$, the $(\sigma_a\circ \tau_a)(2k+i+1)$ appearing in the generating functions of negative modes now becomes $(\sigma_a^{new}\circ\tau_a^{new})(2k+i)$. In particular, $(\sigma_a^{new}\circ\tau_a^{new})(2k+\rho+1) = \sigma_a^{new}(2k+1) = 1$. 
    \end{itemize}
    Therefore, we showed that the application of (\ref{exp-Delta-A-B-1}) on $b_1(-n_1-1/2)\cdots b_s(-n_s-1/2)\one$ results in the following series (with the removal of the $new$-superscript)
    \begin{align}
        & \sum_{k= 0}^\infty \sum_{\substack{\sigma_a\in J_{2k}(1, ..., r)\\
        {\scriptsize \colorbox{ffff00}{$\sigma_a(1) > 1$}}}}  (-1)^{\sigma_a}(-1)^{k}T_{a_1, ..., a_r}^{m_1,...,m_r}(\sigma_a(1), ..., \sigma_a(2k))\iota_{yx}(y+x)^{-m_{\sigma_a(1)}-\cdots - m_{\sigma_a(2k)} - k} \nonumber \\
        &  \cdot \sum_{l= 0}^\infty \sum_{\sigma_b\in J_{2l}(1, .., s)}(-1)^{\sigma_b}(-1)^{l}T_{b_1, ..., b_s}^{n_1, ..., n_s}(\sigma_b(1), ..., \sigma_b(2l)) y^{-n_{\sigma_b(1)}-\cdots - n_{\sigma_b(2l)} - l}\nonumber \\
        &  \cdot \sum_{\rho=0}^{\min\{r-2k, s-2l\}}\sum_{\substack{\tau_a\in J_\rho(2k+1,..., r)\\
        {\scriptsize \colorbox{ffff00}{$\tau_a(2k+1) > 2k+1$}}}}
        \sum_{\substack{\tau_b\in J_{\rho}(2l+1, ..., s)}}(-1)^{\tau_a}(-1)^{\tau_b}  (-1)^{r\rho + \frac{\rho(\rho+1)}{2}}\nonumber  \\
        &   \qquad \qquad \cdot \begin{vmatrix}
            \langle (\sigma_a\circ \tau_a)(2k+1), (\sigma_b\circ\tau_b)(2l+1)\rangle_g & \cdots & \langle (\sigma_a\circ \tau_a)(2k+1), (\sigma_b\circ\tau_b)(2l+\rho)\rangle_g\\
            \vdots & & \vdots \\
            \langle (\sigma_a\circ \tau_a)(2k+\rho), (\sigma_b\circ\tau_b)(2l+1)\rangle_g & \cdots & \langle (\sigma_a\circ \tau_a)(2k+\rho), (\sigma_b\circ\tau_b)(2l+\rho)\rangle_g
        \end{vmatrix}\nonumber\\        
        & \qquad \qquad \cdot S(x)_{a_1, ..., a_r}^{(m_1, ..., m_r)}((\sigma_a\circ\tau_a)(2k+\rho+1), ..., (\sigma_a\circ \tau_a)(r)) \nonumber \\
        & \qquad \qquad \cdot S_{b_1, ..., b_s}^{n_1, ..., n_s}((\sigma_b\circ\tau_b)(2l+\rho+1), ..., (\sigma_b\circ\tau_b)(s))\label{exp-Delta-A-B-F1}
    \end{align}

    \noindent\textbf{Application of (\ref{exp-Delta-A-B-2}).} We obtain
    \begin{align*}
        & \sum_{k=0}^\infty \sum_{\sigma_a\in J_{2k}(2, ..., r)}(-1)^{\sigma_a}(-1)^{k}T_{a_1, a_2, ..., a_r}^{m_1, m_2, ..., m_r}(\sigma_a(2), ..., \sigma_a(2k+1))\iota_{yx}(y+x)^{-m_{\sigma_a(2)}-\cdots - m_{\sigma_a(2k+1)}-k}\\
        & \cdot \sum_{l=0}^\infty \sum_{\sigma_b\in J_{2l}(1, ..., s)}(-1)^{\sigma_b}(-1)^l T_{b_1, ..., b_s}^{n_1, ..., n_s}(\sigma_b(1), ..., \sigma_b(2l))y^{-n_{\sigma_b(1)}-\cdots - n_{\sigma_b(2l)}-l}\\
        & \cdot \sum_{\rho=0}^{\min\{r-1-2k, s-2l\}}\sum_{\tau_a\in J_\rho(2k+2, ..., r)}\sum_{\tau_b\in J_\rho(2l+1, ..., s)}(-1)^{\tau_a}(-1)^{\tau_b} (-1)^{(r-1)\rho+\frac{\rho(\rho+1)}2}\\
        & \qquad \qquad\cdot  \begin{vmatrix}
            \langle (\sigma_a\circ \tau_a)(2k+2), \tilde{q}_{2l+1}\rangle_g & \cdots & \langle (\sigma_a\circ \tau_a)(2k+2), \tilde{q}_{2l+\rho}\rangle_g\\
            \vdots & & \vdots \\
            \langle (\sigma_a\circ \tau_a)(2k+1+\rho), \tilde{q}_{2l+1}\rangle_g & \cdots & \langle (\sigma_a\circ \tau_a)(2k+1+\rho), \tilde{q}_{2l+\rho}\rangle_g
        \end{vmatrix}\nonumber\\
        & \qquad \qquad \sum_{\alpha\geq 0}\sum_{\beta\geq 0} C_{\beta\alpha}a_1(\beta+1/2)y^{-\beta-\alpha-1}(x^\alpha)^{(m_1)} \\
        &\qquad \qquad  \qquad \quad \cdot  S(x)_{a_1, ..., a_{r}}^{m_1, ..., m_r}((\sigma_a\circ\tau_a)(2k+\rho+2), ...,(\sigma_a\circ\tau_a)(r))\\
        &\qquad \qquad \qquad \quad \cdot S_{b_1, ..., b_s}^{n_1, ..., n_s}(\tilde{q}_{2l+\rho+1}, ..., \tilde{q}_{s})
    \end{align*}
    We focus on the last three lines:
    \begin{align}
        & \sum_{\alpha\geq 0}\sum_{\beta\geq 0}C_{\beta\alpha}a_1(\beta+1/2)y^{-\beta-\alpha-1}(x^\alpha)^{m}\nonumber\\
        &\qquad \quad  \cdot S(x)_{a_1, ..., a_{r}}^{m_1, ..., m_r}((\sigma_a\circ\tau_a)(2k+\rho+2), ...,(\sigma_a\circ\tau_a)(r))\nonumber\\
        &\qquad \quad \cdot S_{b_1, ..., b_s}^{n_1, ..., n_s}(\tilde{q}_{2l+\rho+1}, ..., \tilde{q}_{s}) \nonumber\\
        %=======================
        = \ & \sum_{\alpha, \beta\geq 0}\sum_{\lambda=2k+2+\rho}^{r} (-1)^{\lambda-2k-2-\rho} C_{\beta\alpha} y^{-\beta-\alpha-1}(x^\alpha)^{(m_1)} \nonumber\\
        &\qquad \quad  \cdot a_{(\sigma_a\circ\tau_a)(2k+2+\rho)}^{(m_{(\sigma_a\circ\tau_a)(2k+2+\rho)})}(x)_V^-\cdots \left\{a_1(\beta+1/2), a_{(\sigma_a\circ\tau_a)(\lambda)}^{(m_{(\sigma_a\circ\tau_a)(\lambda)})}(x)^- \right\}\cdots a_{(\sigma_a\circ\tau_a)(r)}^{(m_{(\sigma_a\circ\tau_a)(r)})}(x)_V^-\nonumber\\
        &\qquad \quad \cdot b_{\tilde{q}_{2l+\rho+1}}(-n_{\tilde{q}_{2l+\rho+1}}-1/2) \cdots b_{\tilde{q}_s}(-n_{\tilde{q}_s}-1/2)\one \nonumber\\
        %======================
        & + \sum_{\beta\geq 0}\sum_{\lambda=2l+\rho+1}^{s}(-1)^{r-2k-2-\rho+1} (-1)^{\lambda-2l-\rho-1}\left(\iota_{yx}g_{m_1 \beta}(y+x, y)-(x^{-\beta-1})^{(m_1)}\right) \nonumber\\
        &\qquad \qquad  \cdot a_{(\sigma_a\circ\tau_a)(2k+2+\rho)}^{(m_{(\sigma_a\circ\tau_a)(2k+2+\rho)})}(x)_V^-\cdots a_{(\sigma_a\circ\tau_a)(r)}^{(m_{(\sigma_a\circ\tau_a)(r)})}(x)_V^-\nonumber\\
        &\qquad \qquad \cdot b_{\tilde{q}_{2l+\rho+1}}(-n_{\tilde{q}_{2l+\rho+1}}-1/2)\cdots \left\{a_1(\beta+1/2), b_{\tilde{q}_\lambda}(-n_{\tilde{q}_{\lambda}}-1/2)\right\} \cdots b_{\tilde{q}_s}(-n_{\tilde{q}_s}-1/2)\one \nonumber\\
        %======================
        = \ & \sum_{\alpha, \beta\geq 0}\sum_{\lambda=2k+2+\rho}^{r} (-1)^{\lambda-\rho} C_{\beta\alpha} y^{-\beta-\alpha-1}(x^\alpha)^{(m_1)} (a_1, a_{(\sigma_a\circ\tau_a)(\lambda)}) (x^{\beta})^{(m_{(\sigma_a\circ\tau_a)(\lambda)})}\nonumber\\
        &\qquad \quad  \cdot S(x)_{a_1, ..., a_r}^{m_1, ..., m_r}\left((\sigma_a\circ\tau_a)(2k+2+\rho), ..., \reallywidehat{(\sigma_a\circ\tau_a)(\lambda)}, ...., (\sigma_a\circ\tau_a)(r)\right)\nonumber\\
        &\qquad \quad \cdot S_{b_1, ..., b_s}^{n_1, ..., n_s}(\tilde{q}_{2l+\rho+1}, ... \tilde{q}_s) \label{exp-Delta-A-B-2-1}\\
        %======================
        & + \sum_{\lambda=2l+\rho+1}^{s} (-1)^{\lambda+r} \left(\iota_{yx}g_{m_1 n_{\tilde{q}_\lambda}}(y+x, y)-(x^{-n_{\tilde{q}_\lambda}-1})^{(m_1)}\right) (a_1, b_{\tilde{q}_\lambda})\nonumber\\
        &\qquad \qquad  \cdot S(x)_{a_1, ..., a_{r}}^{m_1, ..., m_r}((\sigma_a\circ\tau_a)(2k+\rho+2), ...,(\sigma_a\circ\tau_a)(r))\nonumber\\
        &\qquad \qquad \cdot S_{b_1, ..., b_s}^{n_1, ..., n_s}(\tilde{q}_{2l+\rho+1}, ..., \widehat{\tilde{q}_{\lambda}}, ..., \tilde{q}_{s}) \label{exp-Delta-A-B-2-2}
        %======================
    \end{align}
    where the last equality follows from Proposition \ref{Exp-Delta-Neg-Comm}. 

    In what follows, we will separately discuss the contribution by (\ref{exp-Delta-A-B-2-1}) and by (\ref{exp-Delta-A-B-2-2}) to the application of (\ref{exp-Delta-A-B-2}).

    \noindent\textbf{$\blacktriangleright$ Contribution to the application of (\ref{exp-Delta-A-B-2}) by (\ref{exp-Delta-A-B-2-1})}. 
    Note that for every $\lambda'=2, ..., r$, 
    \begin{align*}
        & \sum_{\alpha, \beta\geq 0} C_{\beta\alpha}y^{-\beta-\alpha-1} (x^\alpha)^{(m_1)} (x^\beta)^{(m_{\lambda'})} =  \sum_{\alpha, \beta\geq 0} C_{\beta\alpha}y^{-\beta-\alpha-1} \binom{\alpha}{m_1}\binom{\beta}{m_{\lambda'}}x^{\alpha+\beta-m_1-m_{\lambda'}} \\
        = \ & \sum_{\alpha, \beta\geq 0} C_{\beta+m_{\lambda'},\alpha+m_1}y^{-\beta-\alpha-m_1-m_{\lambda'}-1} \binom{\alpha+m_1}{m_1}\binom{\beta+m_{\lambda'}}{m_{\lambda'}}x^{\alpha+\beta} \\
        = \ & \sum_{\alpha\geq 0} \sum_{\beta= 0}^\alpha C_{\beta+m_{\lambda'},\alpha-\beta +m_1}y^{-\alpha-m_1-m_{\lambda'}-1} \binom{\alpha-\beta+m_1}{m_1}\binom{\beta+m_{\lambda'}}{m_{\lambda'}}x^{\alpha}\\
        = \ & \sum_{\alpha\geq 0}\binom{-m_{\lambda'}-m_1-1}{\alpha}C_{m_{\lambda'}m_1} \cdot y^{-\alpha-m_1-m_{\lambda'}-1}x^\alpha     =  -C_{m_1m_{\lambda'}} \iota_{yx}(y+x)^{-m_1-m_{\lambda'}-1},
    \end{align*}
    where we used Formula (\ref{C_rt-formula}). Thus the part of (\ref{exp-Delta-A-B-2}) contributed by (\ref{exp-Delta-A-B-2-1}) is 
    \begin{align*}
        & \sum_{k=0}^\infty \sum_{\sigma_a\in J_{2k}(2, ..., r)}(-1)^{\sigma_a}(-1)^{k}T_{a_2, ..., a_r}^{m_2, ..., m_r}(\sigma_a(2), ..., \sigma_a(2k+1))\iota_{yx}(y+x)^{-m_{\sigma_a(2)}-\cdots - m_{\sigma_a(2k+1)}-k}\\
        & \cdot \sum_{l=0}^\infty \sum_{\sigma_b\in J_{2l}(1, ..., s)}(-1)^{\sigma_b}(-1)^l T_{b_1, ..., b_s}^{n_1, ..., n_s}(\sigma_b(1), ..., \sigma_b(2l))y^{-n_{\sigma_b(1)}-\cdots - n_{\sigma_b(2l)}-l}\\
        & \cdot \sum_{\rho=0}^{\min\{r-1-2k, s-2l\}}\sum_{\tau_a\in J_\rho(2k+2, ..., r)}\sum_{\tau_b\in J_\rho(2l+1, ..., s)}(-1)^{\tau_a}(-1)^{\tau_b} (-1)^{(r-1)\rho+\frac{\rho(\rho+1)}2}\\
        & \qquad \qquad\cdot  \begin{vmatrix}
            \langle (\sigma_a\circ \tau_a)(2k+2), \tilde{q}_{2l+1}\rangle_g & \cdots & \langle (\sigma_a\circ \tau_a)(2k+2), \tilde{q}_{2l+\rho}\rangle_g\\
            & & \\
            \langle (\sigma_a\circ \tau_a)(2k+1+\rho), \tilde{q}_{2l+1}\rangle_g & \cdots & \langle (\sigma_a\circ \tau_a)(2k+1+\rho), \tilde{q}_{2l+\rho}\rangle_g
        \end{vmatrix}\nonumber\\
        & \qquad \qquad \qquad \cdot \sum_{\lambda = 2k + 2 + \rho}^r (-1)^{\lambda-\rho-1} \left(a_1, a_{(\sigma_a\circ \tau_a)(\lambda)}\right) C_{m_1 m_{(\sigma_a\circ\tau_a)(\lambda)}}\iota_{yx}(y+x)^{-m_1-m_{(\sigma_a\circ\tau_a)(\lambda)}-1}\\
        &\qquad \qquad \qquad \qquad \quad  \cdot S(x)_{a_1,...,a_r}^{m_1,...,m_r}((\sigma_a\circ\tau_a)(2k+2+\rho), ..., \reallywidehat{(\sigma_a\circ\tau_a)(\lambda)}, ..., (\sigma_a\circ\tau_a)(r))\\
        &\qquad \qquad \qquad \qquad \quad \cdot S_{b_1, ..., b_s}^{n_1, ..., n_s}(\tilde{q}_{2l+\rho+1}, ... \tilde{q}_s) 
    \end{align*}
    We now use Remark \ref{2-shuffle-summation} to change the order of summation. More precisely, we will perform the following changes
        \begin{align*}
            & \sum_{\sigma_a \in J_{2k}(2, ..., r)} (-1)^{\sigma_a}\sum_{\tau_a\in J_{\rho}(2k+2, ..., r)}(-1)^{\tau_a}\sum_{\lambda=2k+2+\rho}^{r}(-1)^{\lambda-2k-2-\rho-1}\\
            \mapsto \ & \sum_{\sigma_a \in J_{2k}(2, ..., r)} (-1)^{\sigma_a}\sum_{\lambda=2k+2}^{r} (-1)^{\lambda-2k-2-1}\sum_{\tau_a\in J_{\rho}(2k+2,...,\widehat{\lambda}, ..., r)}(-1)^{\tau_a}
        \end{align*}
    and $(\sigma_a\circ \tau_a)(\lambda) \mapsto \sigma_a(\lambda)$. So the contribution is rewritten as 
    \begin{align*}
        & \sum_{k=0}^\infty \sum_{\sigma_a\in J_{2k}(2, ..., r)}\sum_{\lambda=2k+2}^r (-1)^{\lambda -1}\left(a_1, a_{\sigma_a(\lambda)}\right) C_{m_1 m_{\sigma_a(\lambda)}}\iota_{yx}(y+x)^{-m_1-m_{\sigma_a(\lambda)}-1}\\
        &\qquad \qquad \cdot  (-1)^{\sigma_a}(-1)^{k}T_{a_2, ..., a_r}^{m_2, ..., m_r}(\sigma_a(2), ..., \sigma_a(2k+1))\iota_{yx}(y+x)^{-m_{\sigma_a(2)}-\cdots - m_{\sigma_a(2k+1)}-k}\\
        & \cdot \sum_{l=0}^\infty \sum_{\sigma_b\in J_{2l}(1, ..., s)}(-1)^{\sigma_b}(-1)^l T_{b_1, ..., b_s}^{n_1, ..., n_s}(\sigma_b(1), ..., \sigma_b(2l))y^{-n_{\sigma_b(1)}-\cdots - n_{\sigma_b(2l)}-l}\\
        & \cdot \sum_{\rho=0}^{\min\{r-1-2k, s-2l\}}\sum_{\tau_a\in J_\rho(2k+2, ...,\widehat{\lambda},..., r)}\sum_{\tau_b\in J_\rho(2l+1, ..., s)}(-1)^{\tau_a}(-1)^{\tau_b} (-1)^{(r-1)\rho+\frac{\rho(\rho+1)}2}\\
        & \qquad \qquad\cdot  \begin{vmatrix}
            \langle (\sigma_a\circ \tau_a)(2k+2), \tilde{q}_{2l+1}\rangle_g & \cdots & \langle (\sigma_a\circ \tau_a)(2k+2), \tilde{q}_{2l+\rho}\rangle_g\\
            & & \\
            \langle (\sigma_a\circ \tau_a)(2k+1+\rho), \tilde{q}_{2l+1}\rangle_g & \cdots & \langle (\sigma_a\circ \tau_a)(2k+1+\rho), \tilde{q}_{2l+\rho}\rangle_g
        \end{vmatrix}\nonumber\\
        &\qquad \qquad \qquad \qquad \quad  \cdot S(x)_{a_1,...,a_r}^{m_1,...,m_r}((\sigma_a\circ\tau_a)(2k+2+\rho), ..., \reallywidehat{\sigma_a(\lambda)}, ..., (\sigma_a\circ\tau_a)(r))\\
        &\qquad \qquad \qquad \qquad \quad \cdot S_{b_1, ..., b_s}^{n_1, ..., n_s}(\tilde{q}_{2l+\rho+1}, ... \tilde{q}_s) 
    \end{align*}
    Conceptually, we would like to move the $(\sigma_a\circ\tau_a)(\lambda)$ from the third sequence to the first sequence. But since algebraic expression is slightly different from what is discussed in Lemma \ref{Comb-Id-Lemma}, we shall go through all the details. 
    \begin{itemize}
        \item For each $\lambda = 2k+2, ..., r$, we would like to modify the shuffle $\sigma_{a}$ by moving $\sigma_a(\lambda)$ from the second sequence to the first sequence. Let $\nu\in \{2, ..., 2k+2\}$ be the unique number such that $\sigma_a(\nu-1)<\sigma_a(\lambda)<\sigma_a(\nu)$. Let $\sigma_a^{new}$ be the modified shuffle. Then $\sigma_a^{new}$ is the following permutation

        {\small $$\left(\begin{array}{c c c c c c c c c c c c c}
            2 & \cdots & \nu-1 & \nu & \nu+1 & \cdots & 2k+2 & 2k+3 & \cdots & \lambda & \lambda+1 & \cdots & r\\
            \sigma_a(2) & \cdots & \sigma_a(\nu-1) & \sigma_a(\lambda) & \sigma_a(\nu) & \cdots & \sigma_a(2k+1) & \sigma_a(2k+2) & \cdots & \sigma_a(\lambda-1) & \sigma_a(\lambda+1) & \cdots & \sigma_a(r)
        \end{array}\right)$$}
        Clearly, 
        $$\sigma_a^{new} = \sigma_a \circ (\lambda, \lambda-1, ..., \nu+1, \nu) \Rightarrow (-1)^{\sigma_a^{new}}(-1)^{\nu-1}=(-1)^{\sigma_a}(-1)^{\lambda-1}$$
        and $\sigma_a^{new}$ is a 2-shuffle in $J_{2k+1}(2, ..., r)$. 
        \item We further fix $\tau_a\in J_\rho(2k+2, ..., \widehat{\lambda}, ..., r)$ and define $\tau_a^{new}$ by 
        $$\tau_a^{new}= \begin{pmatrix}
            2k+3 & \cdots & \lambda & \lambda+1 & \cdots & r\\
            \tau_a(2k+2) & \cdots &\tau_a(\lambda-1) & \tau_a(\lambda+1) & \cdots &\tau_a(r)
        \end{pmatrix} $$
        Essentially, $\tau_a^{new}$ is obtained from $\tau_a$ by renaming the indices. Thus $(-1)^{\tau_a^{new}} = (-1)^{\tau_a}$. The set of $\tau_a\in J_\rho(2k+2, ..., \widehat{\lambda}, ..., r)$ is in bijective correspondence with the set of $\tau_a^{new}\in J_{\rho}(2k+3, ..., r)$. Summing over $\tau_a\in J_\rho(2k+2, ..., \widehat{\lambda}, ..., r)$ amounts to summing over $\tau_a^{new}\in J_{\rho}(2k+3, ..., r)$
        \item Clearly, for each $i=1, ..., r-2k+2$, we have 
        $$(\sigma_a^{new}\circ\tau_a^{new})(2k+2+i) =\left\{\begin{array}{ll}(\sigma_a\circ\tau_a)(2k+2+i) & \text{ if }2k+2+i \leq \lambda - 1 \\
        (\sigma_a\circ\tau_a)(2k+2+i-1) & \text{ if }2k+2+i \geq \lambda  \\
        \end{array}\right.$$ 
        \item We rewrite the total contraction number $T_{a_2, ..., a_r}^{m_2, ..., m_r}(\sigma_a(2), ..., \sigma_a(2k+1))$ as \\
        $T_{a_1, ..., a_r}^{m_1, ..., m_r}(\widehat{1}, \sigma_a^{new}(2), ..., \widehat{\sigma_a^{new}(\nu)}, ..., \sigma_a^{new}(2k+2))$. We also rewrite other terms involving $\sigma_a$ and $\tau_a$ accordingly in terms of $\sigma_a^{new}$ and $\tau_a^{new}$. 
        \item Finally, since for each fixed $\lambda$ and $\sigma_a$, the number $\nu$ and the permutation $\sigma_a^{new}$ is uniquely determined, and vice versa, the set of $(\lambda, \sigma_a)$ is in bijective correspondence with the set of $(\nu, \sigma_a^{new})$. Summing over $\lambda \in \{2k+2, ..., r\}$ and $\sigma_a\in J_{2k}(1, ..., r)$ of the original summand amounts to summing over $\nu\in \{2, ..., 2k+2\}$ and $\sigma_a^{new}\in J_{2k+1}(2, ..., r)$ the modified expression. 
    \end{itemize}
    So the contribution of (\ref{exp-Delta-A-B-2-1}) to (\ref{exp-Delta-A-B-2}), after the reorganization above and the removal of the $new$-superscript, becomes
    \begin{align*}
        & \sum_{k=0}^\infty \sum_{\sigma_a\in J_{2k+1}(2, ..., r)}\sum_{\nu=2}^{2k+2} (-1)^{\nu -1}\left(a_1, a_{\sigma_a(\nu)}\right) C_{m_1 m_{\sigma_a(\nu)}}\iota_{yx}(y+x)^{-m_1-m_{\sigma_a(2)}-\cdots - m_{\sigma_a(2k+2)}-k}\\
        &\qquad \qquad \cdot  (-1)^{\sigma_a}(-1)^{k}T_{a_1, ..., a_r}^{m_1, ..., m_r}(\widehat{1},\sigma_a(2), ..., \widehat{\sigma_a(\nu)},...,\sigma_a(2k+2))\\
        & \cdot \sum_{l=0}^\infty \sum_{\sigma_b\in J_{2l}(1, ..., s)}(-1)^{\sigma_b}(-1)^l T_{b_1, ..., b_s}^{n_1, ..., n_s}(\sigma_b(1), ..., \sigma_b(2l))y^{-n_{\sigma_b(1)}-\cdots - n_{\sigma_b(2l)}-l}\\
        & \cdot \sum_{\rho=0}^{\min\{r-1-2k, s-2l\}}\sum_{\tau_a\in J_\rho(2k+3, ..., r)}\sum_{\tau_b\in J_\rho(2l+1, ..., s)}(-1)^{\tau_a}(-1)^{\tau_b} (-1)^{(r-1)\rho+\frac{\rho(\rho+1)}2}\\
        & \qquad \qquad\cdot  \begin{vmatrix}
            \langle (\sigma_a\circ \tau_a)(2k+3), \tilde{q}_{2l+1}\rangle_g & \cdots & \langle (\sigma_a\circ \tau_a)(2k+3), \tilde{q}_{2l+\rho}\rangle_g\\
            & & \\
            \langle (\sigma_a\circ \tau_a)(2k+2+\rho), \tilde{q}_{2l+1}\rangle_g & \cdots & \langle (\sigma_a\circ \tau_a)(2k+2+\rho), \tilde{q}_{2l+\rho}\rangle_g
        \end{vmatrix}\nonumber\\        &\qquad \qquad \qquad \qquad \quad  \cdot a_{(\sigma_a\circ\tau_a)(2k+3+\rho)}^{(m_{(\sigma_a\circ\tau_a)(2k+3+\rho)})}(x)_V^-\cdots   a_{(\sigma_a\circ\tau_a)(r)}^{(m_{\sigma_a(r)})}(x)_V^-\\
        &\qquad \qquad \qquad \qquad \quad \cdot b_{(\sigma_b\circ\tau_b)(2l+\rho+1)}(-n_{(\sigma_b\circ\tau_b)(2l+\rho+1)}-1/2) \cdots b_{(\sigma_b\circ\tau_b)(s)}(-n_{(\sigma_b\circ\tau_b)(s)}-1/2)\one 
    \end{align*}    
    Finally, we regard $\sigma_a\in J_{2k+1}(2, ..., r)$ as an element in $J_{2k+2}(1,...,r)$ with \colorbox{ffff00}{$\sigma_a(1)=1$}, reparametrize $k\mapsto k-1$, then use the definition of the total contraction number, to conclude that the contribution of (\ref{exp-Delta-A-B-2-1}) in (\ref{exp-Delta-A-B-2}) is 
    \begin{align}
        & \sum_{k= 1}^\infty \sum_{\substack{\sigma_a\in J_{2k}(1, ..., r)\\
        {\scriptsize \colorbox{ffff00}{$\sigma_a(1) = 1$}}}}  (-1)^{\sigma_a}(-1)^{k}T_{a_1, ..., a_r}^{m_1,...,m_r}(\sigma_a(1), ..., \sigma_a(2k))\iota_{yx}(y+x)^{-m_{\sigma_a(1)}-\cdots - m_{\sigma_a(2k)} - k} \nonumber \\
        &  \cdot \sum_{l= 0}^\infty \sum_{\sigma_b\in J_{2l}(1, .., s)}(-1)^{\sigma_b}(-1)^{l}T_{b_1, ..., b_s}^{n_1, ..., n_s}(\sigma_b(1), ..., \sigma_b(2l)) y^{-n_{\sigma_b(1)}-\cdots - n_{\sigma_b(2l)} - l}\nonumber \\
        &  \cdot \sum_{\rho=0}^{\min\{r-2k, s-2l\}}\sum_{\substack{\tau_a\in J_\rho(2k+1,..., r)}}
        \sum_{\substack{\tau_b\in J_{\rho}(2l+1, ..., s)}}(-1)^{\tau_a}(-1)^{\tau_b}  (-1)^{r\rho + \frac{\rho(\rho+1)}{2}}\nonumber  \\
        &   \qquad \qquad \cdot \begin{vmatrix}
            \langle (\sigma_a\circ \tau_a)(2k+1), (\sigma_b\circ\tau_b)(2l+1)\rangle_g & \cdots & \langle (\sigma_a\circ \tau_a)(2k+1), (\sigma_b\circ\tau_b)(2l+\rho)\rangle_g\\
            \vdots & & \vdots \\
            \langle (\sigma_a\circ \tau_a)(2k+\rho), (\sigma_b\circ\tau_b)(2l+1)\rangle_g & \cdots & \langle (\sigma_a\circ \tau_a)(2k+\rho), (\sigma_b\circ\tau_b)(2l+\rho)\rangle_g
        \end{vmatrix}\nonumber\\        
        & \qquad\qquad \qquad \cdot a_{(\sigma_a\circ\tau_a)(2k+\rho+1)}^{(m_{(\sigma_a\circ\tau_a)(2k+\rho+1)})}(x)_V^-\cdots a_{(\sigma_a\circ\tau_a)(r)}^{(m_{(\sigma_a\circ\tau_a)(r)})}(x)_V^- \nonumber \\
        & \qquad \qquad \qquad \cdot b_{(\sigma_b\circ\tau_b)(2l+\rho+1)}(-n_{(\sigma_b\circ\tau_b)(2l+\rho+1)}-1/2)\cdots  b_{(\sigma_b\circ\tau_b)(s)}(-n_{(\sigma_b\circ\tau_b)(s)}-1/2)\one\label{exp-Delta-A-B-F2}
    \end{align}
    where the summand corresponding to $k=0$ is vacuous. 
  
    \noindent\textbf{$\blacktriangleright$ Contribution of the Application of (\ref{exp-Delta-A-B-2}) by (\ref{exp-Delta-A-B-2-2})} To analyze this term, since we will have to operate on $\tau_b$, but not $\sigma_a$ and $\tau_a$, we will use the abbreviation $\tilde{p}_\alpha = (\sigma_a\circ\tau_a)(\alpha)$ for $\alpha = 2k+2, ..., r-1$ and will not use the previous abbreviation $\tilde{q}$. Supplementing the other lines, we see that the part of the application of (\ref{exp-Delta-A-B-2}) contributed by (\ref{exp-Delta-A-B-2-2}) is 
    \begin{align*}
        &  \sum_{k=0}^\infty \sum_{\sigma_a\in J_{2k}(2, ..., r)}(-1)^{\sigma_a}(-1)^{k}T_{a_1, a_2, ..., a_r}^{m_1, m_2, ..., m_r}(\sigma_a(2), ..., \sigma_a(2k+1))\iota_{yx}(y+x)^{-m_{\sigma_a(2)}-\cdots - m_{\sigma_a(2k+1)}-k}\\
        & \cdot \sum_{l=0}^\infty \sum_{\sigma_b\in J_{2l}(1, ..., s)}(-1)^{\sigma_b}(-1)^l T_{b_1, ..., b_s}^{n_1, ..., n_s}(\sigma_b(1), ..., \sigma_b(2l))y^{-n_{\sigma_b(1)}-\cdots - n_{\sigma_b(2l)}-l}\\
        & \cdot \sum_{\rho=0}^{\min\{r-1-2k, s-2l\}}\sum_{\tau_a\in J_\rho(2k+2, ..., r)}\sum_{\tau_b\in J_\rho(2l+1, ..., s)}(-1)^{\tau_a}(-1)^{\tau_b} (-1)^{(r-1)\rho+\frac{\rho(\rho+1)}2}\\
        & \qquad \qquad\cdot  \begin{vmatrix}
            \langle \tilde{p}_{2k+2}, (\sigma_b\circ\tau_b)(2l+1)\rangle_g & \cdots & \langle \tilde{p}_{2k+2}, (\sigma_b\circ\tau_b)(2l+\rho)\rangle_g\\
            \vdots & & \vdots \\
            \langle \tilde{p}_{2k+1+\rho}, (\sigma_b\circ\tau_b)(2l+1)\rangle_g & \cdots & \langle \tilde{p}_{2k+1+\rho}, (\sigma_b\circ\tau_b)(2l+\rho)\rangle_g
        \end{vmatrix}\nonumber\\
        & \qquad \qquad \cdot \sum_{\lambda=2l+1+\rho}^s (-1)^{\lambda+r} \left(\iota_{yx}g_{m_1,n_{(\sigma_b\circ\tau_b)(\lambda)}}(y+x, y) -(x^{-n_{(\sigma_b\circ\tau_b)(\lambda)}-1})^{(m_1)}\right)\left(a_{1}, b_{(\sigma_a\circ \tau_a)(\lambda)}\right)\\ 
        &\qquad \qquad  \cdot S(x)_{a_1, ..., a_{r}}^{m_1, ..., m_r}(\tilde{p}_{2k+\rho+2}, ...,\tilde{p}_r)\nonumber\\
        &\qquad \qquad \cdot S_{b_1, ..., b_s}^{n_1, ..., n_s}((\sigma_b\circ\tau_b)(2l+\rho+1), ..., \reallywidehat{(\sigma_b\circ\tau_b)(\lambda)}, ..., (\sigma_b\circ\tau_b)(s))
    \end{align*}
    Apply the equations (\ref{Comb-Id-4}) and (\ref{Comb-Id-5}) in Remark \ref{2-shuffle-summation} to the summation regarding $\tau_b\in J_\rho(2l+1, ..., s)$. More precisely, we will perform the following changes
    \begin{align*}
        \sum_{\tau_b\in J_\rho(2l+1, ..., s)}\sum_{\lambda=2l+\rho+1}^s \mapsto \sum_{\lambda'=2l+1}^s \sum_{\tau_b\in J_{\rho}(2l+1, ..., \widehat{\lambda'}, ..., s)} \mapsto \sum_{\tau_b\in J_{\rho+1}(2l+1, ..., s)}\sum_{\nu=2l+1}^{2l+\rho}.
    \end{align*}
    We obtain
    \begin{align*}
        & \cdot \sum_{k=0}^\infty \sum_{\sigma_a\in J_{2k}(2, ..., r)}(-1)^{\sigma_a}(-1)^{k}T_{a_1, a_2, ..., a_r}^{m_1, m_2, ..., m_r}(\sigma_a(2), ..., \sigma_a(2k+1))\iota_{yx}(y+x)^{-m_{\sigma_a(2)}-\cdots - m_{\sigma_a(2k+1)}-k}\\
        & \cdot \sum_{l=0}^\infty \sum_{\sigma_b\in J_{2l}(1, ..., s)}(-1)^{\sigma_b}(-1)^l T_{b_1, ..., b_s}^{n_1, ..., n_s}(\sigma_b(1), ..., \sigma_b(2l))y^{-n_{\sigma_b(1)}-\cdots - n_{\sigma_b(2l)}-l}\\
        & \cdot \sum_{\rho=0}^{\min\{r-1-2k, s-2l\}}\sum_{\tau_a\in J_\rho(2k+2, ..., r)}\sum_{\tau_b\in J_{\rho+1}(2l+1, ..., s)}(-1)^{\tau_a}(-1)^{\tau_b} (-1)^{(r-1)\rho+\frac{\rho(\rho+1)}2}\\
        & \cdot \sum_{\nu = 2l+1}^{2l+\rho} (-1)^{\nu+r}\left(\langle 1, b_{(\sigma_b\circ\tau_b)(\nu)}\rangle_g -\langle 1, b_{(\sigma_b\circ\tau_b)(\nu)}\rangle_x \right)\\
        & \qquad \qquad\cdot  \begin{vmatrix}
            \langle \tilde{p}_{2k+2}, (\sigma_b\circ\tau_b)(2l+1)\rangle_g & \cdots & \hbox{\sout{$\langle \tilde{p}_{2k+2}, (\sigma_b\circ\tau_b)(\nu)\rangle$}}& \cdots & \langle \tilde{p}_{2k+2}, (\sigma_b\circ\tau_b)(2l+\rho)\rangle_g\\
            \vdots & & \vdots & & \vdots\\
            \langle \tilde{p}_{2k+1+\rho}, (\sigma_b\circ\tau_b)(2l+1)\rangle_g & \cdots & \hbox{\sout{$\langle \tilde{p}_{2k+1+\rho}, (\sigma_b\circ\tau_b)(\nu)\rangle$}}& \cdots & \langle \tilde{p}_{2k+1+\rho}, (\sigma_b\circ\tau_b)(2l+\rho)\rangle_g
        \end{vmatrix}\nonumber\\
 %       & \qquad \qquad \cdot \sum_{\lambda=2l+1+\rho}^s (-1)^{\lambda+r} \left(\iota_{yx}g_{m_1,n_{(\sigma_b\circ\tau_b)(\lambda)}}(y+x, y) -(x^{-n_{(\sigma_b\circ\tau_b)(\lambda)}-1})^{(m_1)}\right)\left(a_{1}, b_{(\sigma_a\circ \tau_a)(\lambda)}\right)\\ 
        &\qquad \qquad  \cdot S(x)_{a_1, ..., a_{r}}^{m_1, ..., m_r}(\tilde{p}_{2k+\rho+2}, ...,\tilde{p}_r)\nonumber\\
        &\qquad \qquad \cdot S_{b_1, ..., b_s}^{n_1, ..., n_s}((\sigma_b\circ\tau_b)(2l+\rho+2), ..., (\sigma_b\circ\tau_b)(s))
    \end{align*}
    Finally, we get rid of the abbreviation $\tilde{p}$, rearrange the factors $(-1)^{(r-1)\rho + \frac{\rho(\rho+1)}2}$ and $(-1)^{\nu +r}$ as $(-1)^{1+\nu} (-1)^{r(\rho+1) + \frac{(\rho+1)(\rho+2)}2}$, then combine the factor $(-1)^{\nu+1}$ with the determinant and the term $\left(\langle 1, b_{(\sigma_b\circ\tau_b)(\nu)}\rangle_g -\langle 1, b_{(\sigma_b\circ\tau_b)(\nu)}\rangle_x \right)$. The summation with respect to $\nu$ is precisely the expansion of the determinant appearing below along the first row: 
    \begin{align}
        &  \sum_{k=0}^\infty \sum_{\sigma_a\in J_{2k}(2, ..., r)}(-1)^{\sigma_a}(-1)^{k}T_{a_1, a_2, ..., a_r}^{m_1, m_2, ..., m_r}(\sigma_a(2), ..., \sigma_a(2k+1))\iota_{yx}(y+x)^{-m_{\sigma_a(2)}-\cdots - m_{\sigma_a(2k+1)}-k}\nonumber\\
        & \cdot \sum_{l=0}^\infty \sum_{\sigma_b\in J_{2l}(1, ..., s)}(-1)^{\sigma_b}(-1)^l T_{b_1, ..., b_s}^{n_1, ..., n_s}(\sigma_b(1), ..., \sigma_b(2l))y^{-n_{\sigma_b(1)}-\cdots - n_{\sigma_b(2l)}-l}\nonumber\\
        & \cdot \sum_{\rho=0}^{\min\{r-1-2k, s-2l\}}\sum_{\tau_a\in J_\rho(2k+2, ..., r)}\sum_{\tau_b\in J_{\rho+1}(2l+1, ..., s)}(-1)^{\tau_a}(-1)^{\tau_b} (-1)^{r(\rho+1) + \frac{(\rho+1)(\rho+2)}2}\nonumber\\
        & \qquad \qquad\cdot  \begin{vmatrix}
            \langle 1, b_{(\sigma_b\circ\tau_b)(2l+1)}\rangle_g -\langle 1, b_{(\sigma_b\circ\tau_b)(2l+1)}\rangle_x & \cdots & \langle 1, b_{(\sigma_b\circ\tau_b)(2l+\rho)}\rangle_g -\langle 1, b_{(\sigma_b\circ\tau_b)(2l+\rho)}\rangle_x\\
            \langle (\sigma_a\circ\tau_a)(2k+2), (\sigma_b\circ\tau_b)(2l+1)\rangle_g & \cdots & \langle (\sigma_a\circ\tau_a)(2k+2), (\sigma_b\circ\tau_b)(2l+\rho)\rangle_g\\
            \vdots &  & \vdots\\
            \langle (\sigma_a\circ\tau_a)(2k+1+\rho), (\sigma_b\circ\tau_b)(2l+1)\rangle_g & \cdots & \langle (\sigma_a\circ\tau_a)(2k+1+\rho), (\sigma_b\circ\tau_b)(2l+\rho)\rangle_g
        \end{vmatrix}\nonumber\nonumber\\
        &\qquad \qquad  \cdot S(x)_{a_1, ..., a_{r}}^{m_1, ..., m_r}((\sigma_a\circ\tau_a)(2k+\rho+2), ...,(\sigma_a\circ\tau_a)(r))\nonumber\\
        &\qquad \qquad \cdot S_{b_1, ..., b_s}^{n_1, ..., n_s}((\sigma_b\circ\tau_b)(2l+\rho+2), ..., (\sigma_b\circ\tau_b)(s)) \label{exp-Delta-A-B-2-2f}
    \end{align}   
    We shall turn to (\ref{exp-Delta-A-B-3}) now and combine what we shall get with (\ref{exp-Delta-A-B-2-2f}). 

    \noindent\textbf{Application of (\ref{exp-Delta-A-B-3})}. Clearly, 
    \begin{align*}
        & a_1^{(m_1)}(x)^+ b_1(-n_1-1/2)\cdots b_s(-n_s-1/2) \\
        = \ & \sum_{\lambda=1}^s (-1)^{\lambda-1} (a_1, b_\lambda) (x^{-n_\lambda-1})^{(m_1)} b_1(-n_1-1/2) \cdots \reallywidehat{b_\lambda(-n_\lambda-1/2)}  \cdots b_s(-n_s-1/2)\one.%\\
 %       = \ & \sum_{\lambda=1}^s (-1)^{\lambda-1} \langle 1, \lambda\rangle_x b_{\tilde{q}^{\lambda}_1}(-n_{\tilde{q}^\lambda_1}-1/2)\cdots b_{\tilde{q}^{\lambda}_{s-1}}(-n_{\tilde{q}^\lambda_{s-1}}-1/2)\one.
    \end{align*}
%    Here for each fixed $\lambda=1, ..., s$, let 
%    $$\tilde{q}^{\lambda}_\alpha = \left\{\begin{array}{ll}
 %   \alpha & \text{ if }\alpha \leq \lambda -1 \\
%    \alpha + 1 & \text{ if }\lambda \leq \alpha \leq s-1.
%    \end{array}\right.$$
%    \vskip 3cm
    Therefore, the application of (\ref{exp-Delta-A-B-3}) yields
    \begin{align*}
        & (-1)^{r-1} \sum_{\lambda=1}^{s}(-1)^{\lambda-1}\langle 1, \lambda\rangle_x \\
        & \cdot \exp(\Delta(y)):a_2^{(m_2)}(x)\cdots a_r^{(m_r)}(x): b_1(-n_1-1/2)\cdots \reallywidehat{b_\lambda(-n_\lambda-1/2)}  \cdots b_s(-n_s-1/2)\one \\
        = \ & (-1)^{r-1} \sum_{\lambda=1}^{s}(-1)^{\lambda-1}\langle 1, \lambda\rangle_x\\
        & \cdot \sum_{k=0}^\infty \sum_{\sigma_a\in J_{2k}(2, ..., r)}T_{a_1,..., a_r}^{m_1, ..., m_r}(\sigma_a(2), ..., \sigma_a(2k+1)) \iota_{yx}(y+x)^{-m_{\sigma_a(2)} - \cdots - m_{\sigma_a(r)} - k}\\
       % & \cdot \sum_{l=0}^\infty \sum_{\sigma_b\in J_{2l}(1, ..., \widehat{\lambda}, ..., s)}(-1)^{\sigma_b} (-1)^l T_{b_1, ..., \widehat{b_\lambda}, ..., b_s}^{n_1, ..., \widehat{n_\lambda}, ..., n_s}(\tilde{q}_1, ..., \tilde{q}_{2l}) y^{-n_{\tilde{q}_1}-\cdots - n_{\tilde{q}_{2l}}-l}\\
        & \cdot \sum_{l=0}^\infty \sum_{\sigma_b\in J_{2l}(1, ..., \widehat{\lambda}, ..., s)}(-1)^{\sigma_b} (-1)^l T_{b_1, ..., b_s}^{n_1, ..., n_s}(\tilde{q}_1, ..., \tilde{q}_{2l}) y^{-n_{\tilde{q}_1}-\cdots - n_{\tilde{q}_{2l}}-l}\\
        & \cdot \sum_{\rho=0}^{\min\{r-2k, s-2l\}}\sum_{\tau_a\in J_\rho(2k+2, ..., r)}(-1)^{\tau_a} \sum_{\tau_b\in J_\rho(2l+1, ..., \widehat{\lambda},..., s)}(-1)^{\tau_b}(-1)^{(r-1)\rho +\frac{\rho(\rho+1)}{2}}\\
        & \qquad \cdot \begin{vmatrix}
            \langle (\sigma_a\circ \tau_a)(2k+2), \tilde{q}_{2l+1}\rangle_g & \cdots &  \langle (\sigma_a\circ \tau_a)(2k+2), \tilde{q}_{2l+\rho})\rangle_g \\
            \vdots & & \vdots \\
            \langle (\sigma_a\circ \tau_a)(2k+\rho+1), \tilde{q}_{2l+1}\rangle_g & \cdots  &  \langle (\sigma_a\circ \tau_a)(2k+\rho+1), \tilde{q}_{2l+\rho})\rangle_g 
        \end{vmatrix}\\
        & \qquad \cdot S(x)_{a_1, ..., a_r}^{m_1, ..., m_r}((\sigma_a\circ\tau_a)(2k+\rho+2), ..., (\sigma_a\circ\tau_a)(r))\\
        & \qquad \cdot S_{b_1, ..., b_s}^{n_1, ..., n_s}(\tilde{q}_1, ..., \tilde{q}_{s-1}).
    \end{align*}
    Here the sequence $(\tilde{q}_1, ...., \tilde{q}_{s-1})$ depends on the choice of $\lambda$ (we would refrain from adding a superscript for brevity) and is defined as follows. 
    \begin{itemize}
        \item For fixed $\sigma_b\in J_{2l}(1, ..., \widehat{\lambda}, ..., s)$, $\tilde{q}_1, ..., \tilde{q}_{2l}$ are the first $2l$ numbers in the images of $\sigma_b$. In case $\lambda > 2l$, we have 
    $$\tilde{q}_1 = \sigma_b(1), ..., p(2l)=  \sigma_b(2l); $$
    In case $\lambda \leq 2l$, 
    $$\tilde{q}_1 = \sigma_b(1), ..., \tilde{q}_{\lambda-1} = \sigma_b(\lambda-1), \tilde{q}_{\lambda} = \sigma_b(\lambda+1), ..., \tilde{q}_{2l}=\sigma_b(2l+1).$$
        \item For fixed $\sigma_b\in J_{2l}(1, ..., \widehat{\lambda}, , ..., s)$ and $\tau_b\in J_{\rho}(2l+1, ..., \widehat{\lambda}, ..., s)$, 
        $\tilde{q}_{2l+1}, ..., \tilde{q}_{2l+\rho}$ are the first $\rho$ numbers in the image of $\sigma_b\circ \tau_b$. In case $\lambda > 2l+\rho$, we have 
            $$\tilde{q}_{2l+1} = (\sigma_b\circ\tau_b)(2l+1), ..., \tilde{q}_{2l+\rho}= (\sigma_b\circ\tau_b)(2l+\rho); $$
        in case $2l < \lambda \leq 2l+\rho$, we have
        \begin{align*}
            & \tilde{q}_{2l+1} = (\sigma_b\circ\tau_b)(2l+1), ..., \tilde{q}_{\lambda-1} = (\sigma_b\circ\tau_b)(\lambda-1), \\
            & \tilde{q}_\lambda = (\sigma_b\circ\tau_b)(\lambda+1), ..., \tilde{q}_{2l+\rho}= (\sigma_b\circ\tau_b)(2l+\rho+1); 
        \end{align*}
        in case $1\leq \lambda \leq 2l$, we have
        $$\tilde{q}_{2l+1} = (\sigma_b\circ\tau_b)(2l+2), ..., \tilde{q}_{2l+\rho}= (\sigma_b\circ\tau_b)(2l+\rho+1); $$        
        \item $\tilde{q}_{2l+\rho+1}, ..., \tilde{q}_{s-1}$ forms the complementary sequence of $\tilde{q}_1, ..., \tilde{q}_{2l+\rho}$ in $\{1, ...,\widehat{\lambda}, ..., s\}$ 
        % \item We may likewise replace $T_{b_1, ..., \widehat{\lambda},  b_s}^{n_1, ...,\widehat{\lambda},... n_s}$ by $T_{b_1,..., b_s}^{n_1,...,n_s}$ since none of the indices $\tilde{q}_1, ..., \tilde{q}_l$ possibly takes $\lambda$. 
    \end{itemize}
    We would perform the following operations: 
    \begin{itemize}
        \item Introduce the notations $\tilde{p}_{2k+2}, .... \tilde{p}_r$ as we did for the contribution by (\ref{exp-Delta-A-B-2-2}). 
        %\item Replace $T_{b_1, ..., \widehat{b_\lambda}, ..., b_s}^{n_1, ..., \widehat{n_\lambda}, ..., n_s}$ by $T_{b_1,..., b_s}^{n_1,...,n_s}$ since none of the indices $\tilde{q}_1, ..., \tilde{q}_l$ possibly takes $\lambda$. 
        \item Incorporate $(-1)^{r-1}$ and $(-1)^{(r-1)\rho + \frac{\rho(\rho+1)}2}$, resulting in $(-1)^{r(\rho+1) + \frac{(\rho+1)(\rho+2)}2}$.
        \item Viewing the sequence $\tilde{q}_1 < \cdots < \tilde{q}_{2l}, \tilde{q}_{2l+1}< \cdots < \tilde{q}_{2l+\rho}, \tilde{q}_{2l+\rho+1} < \cdots < \tilde{q}_{s-1}$ as a 3-shuffle, we use Formula (\ref{Comb-Id-2}) to interchange the summation of $\lambda$ and $\sigma_b$. After the interchange, we have a 3-shuffle in $J_{2l, \rho+1}(1, ..., s)$, which we shall denote by $\sigma_b(1), ..., \sigma_b(2l), (\sigma_b\circ\tau_b)(2l+1), ..., (\sigma_b\circ\tau_b)(s)$. The missing term now sits between the $(2l+1)$-th position and the $(2l+\rho)$-th position. 
    \end{itemize}
    After performing these operations, we obtain
    \begin{align*}
        \ & \sum_{k=0}^\infty \sum_{\sigma_a\in J_{2k}(2, ..., r)}T_{a_1,..., a_r}^{m_1, ..., m_r}(\sigma_a(2), ..., \sigma_a(2k+1)) \iota_{yx}(y+x)^{-m_{\sigma_a(2)} - \cdots - m_{\sigma_a(r)} - k}\\
        & \cdot \sum_{l=0}^\infty \sum_{\sigma_b\in J_{2l}(1, ..., s)}(-1)^{\sigma_b} (-1)^l T_{b_1, ..., b_s}^{n_1, ..., n_s}(\sigma_b(1), ..., \sigma_b(2l)) y^{-n_{\sigma_b(1)}-\cdots - n_{\sigma_b(2l)}-l}\\
        & \cdot \sum_{\rho=0}^{\min\{r-2k, s-2l\}}\sum_{\tau_a\in J_\rho(2k+2, ..., r)}(-1)^{\tau_a} \sum_{\tau_b\in J_{\rho+1}(2l+1, ..., s)}(-1)^{\tau_b}(-1)^{r(\rho+1) + \frac{(\rho+1)(\rho+2)}2}\\
        & \cdot \sum_{\lambda =2l+1}^{2l+\rho+1}(-1)^{\lambda-1}\langle 1, (\sigma_b\circ\tau_b)(\lambda)\rangle_x\\
        & \cdot \begin{vmatrix}
            \langle \tilde{p}_{2k+2}, (\sigma_b\circ\tau_b)(2l+1)\rangle_g & \cdots &  \hbox{\sout{$\langle  \tilde{p}_{2k+2}, (\sigma_b\circ\tau_b)(\lambda) \rangle_g$}} & \cdots &  \langle  \tilde{p}_{2k+2},(\sigma_b\circ\tau_b)(2l+\rho+1))\rangle_g \\
            \vdots & & \vdots & & \vdots \\
            \langle \tilde{p}_{2k+\rho+1}, (\sigma_b\circ\tau_b)(2l+1)\rangle_g & \cdots &  \hbox{\sout{$\langle\tilde{p}_{2k+\rho+1},, (\sigma_b\circ\tau_b)(\lambda) \rangle_g$}} & \cdots &  \langle \tilde{p}_{2k+\rho+1},,(\sigma_b\circ\tau_b)(2l+\rho+1))\rangle_g \\
        \end{vmatrix}\\
        & \qquad \cdot S(x)_{a_1, ..., a_r}^{m_1, ..., m_r}((\sigma_a\circ\tau_a)(2k+\rho+2), ..., (\sigma_a\circ\tau_a)(r))\\
        & \qquad \cdot S_{b_1, ..., b_s}^{n_1, ..., n_s}((\sigma_b\circ\tau_b)(2l+\rho+2),..., (\sigma_b\circ\tau_b)(s)).
    \end{align*}
    The summation with respect to $\lambda$ is precisely the expansion of the determinant appearing below along the first row: 
    \begin{align}
        \ & \sum_{k=0}^\infty \sum_{\sigma_a\in J_{2k}(2, ..., r)}T_{a_1,..., a_r}^{m_1, ..., m_r}(\sigma_a(2), ..., \sigma_a(2k+1)) \iota_{yx}(y+x)^{-m_{\sigma_a(2)} - \cdots - m_{\sigma_a(r)} - k}\nonumber\\
        & \cdot \sum_{l=0}^\infty \sum_{\sigma_b\in J_{2l}(1, ..., s)}(-1)^{\sigma_b} (-1)^l T_{b_1, ..., b_s}^{n_1, ..., n_s}(\sigma_b(1), ..., \sigma_b(2l)) y^{-n_{\sigma_b(1)}-\cdots - n_{\sigma_b(2l)}-l}\nonumber\\
        & \cdot \sum_{\rho=0}^{\min\{r-2k, s-2l\}}\sum_{\tau_a\in J_\rho(2k+2, ..., r)}(-1)^{\tau_a} \sum_{\tau_b\in J_{\rho+1}(2l+1, ..., s)}(-1)^{\tau_b}(-1)^{r(\rho+1) + \frac{(\rho+1)(\rho+2)}2}\nonumber\\
        & \qquad \qquad \qquad\cdot \begin{vmatrix}
            \langle 1, (\sigma_b\circ\tau_b)(2l+1)\rangle_x & \cdots &  \langle  1,(\sigma_b\circ\tau_b)(2l+\rho+1))\rangle_x \\
            \langle \tilde{p}_{2k+2}, (\sigma_b\circ\tau_b)(2l+1)\rangle_g & \cdots &  \langle  \tilde{p}_{2k+2},(\sigma_b\circ\tau_b)(2l+\rho+1))\rangle_g \\
            \vdots & & \vdots \\
            \langle \tilde{p}_{2k+\rho+1}, (\sigma_b\circ\tau_b)(2l+1)\rangle_g & \cdots &  \langle \tilde{p}_{2k+\rho+1},,(\sigma_b\circ\tau_b)(2l+\rho+1))\rangle_g \\
        \end{vmatrix}\nonumber\\
        & \qquad \qquad \qquad \cdot S(x)_{a_1, ..., a_r}^{m_1, ..., m_r}((\sigma_a\circ\tau_a)(2k+\rho+2), ..., (\sigma_a\circ\tau_a)(r))\nonumber\\
        & \qquad \qquad \qquad \cdot S_{b_1, ..., b_s}^{n_1, ..., n_s}((\sigma_b\circ\tau_b)(2l+\rho+2),..., (\sigma_b\circ\tau_b)(s)).\label{exp-Delta-A-B-3f}
    \end{align}
    Now we may combine (\ref{exp-Delta-A-B-2-2f}) and (\ref{exp-Delta-A-B-3f}), to obtain 
    \begin{align}
        \ & \sum_{k=0}^\infty \sum_{\sigma_a\in J_{2k}(2, ..., r)}T_{a_1,..., a_r}^{m_1, ..., m_r}(\sigma_a(2), ..., \sigma_a(2k+1)) \iota_{yx}(y+x)^{-m_{\sigma_a(2)} - \cdots - m_{\sigma_a(r)} - k}\nonumber\\
        & \cdot \sum_{l=0}^\infty \sum_{\sigma_b\in J_{2l}(1, ..., s)}(-1)^{\sigma_b} (-1)^l T_{b_1, ..., b_s}^{n_1, ..., n_s}(\sigma_b(1), ..., \sigma_b(2l)) y^{-n_{\sigma_b(1)}-\cdots - n_{\sigma_b(2l)}-l}\nonumber\\
        & \cdot \sum_{\rho=0}^{\min\{r-2k, s-2l\}}\sum_{\tau_a\in J_\rho(2k+2, ..., r)}(-1)^{\tau_a} \sum_{\tau_b\in J_{\rho+1}(2l+1, ..., s)}(-1)^{\tau_b}(-1)^{r(\rho+1) + \frac{(\rho+1)(\rho+2)}2}\nonumber\\
        & \qquad \qquad \qquad\cdot \begin{vmatrix}
            \langle 1, (\sigma_b\circ\tau_b)(2l+1)\rangle_g & \cdots &  \langle  1,(\sigma_b\circ\tau_b)(2l+\rho+1))\rangle_g \\
            \langle \tilde{p}_{2k+2}, (\sigma_b\circ\tau_b)(2l+1)\rangle_g & \cdots &  \langle  \tilde{p}_{2k+2},(\sigma_b\circ\tau_b)(2l+\rho+1))\rangle_g \\
            \vdots & & \vdots \\
            \langle \tilde{p}_{2k+\rho+1}, (\sigma_b\circ\tau_b)(2l+1)\rangle_g & \cdots &  \langle \tilde{p}_{2k+\rho+1},,(\sigma_b\circ\tau_b)(2l+\rho+1))\rangle_g \\
        \end{vmatrix}\nonumber\\
        & \qquad \qquad \qquad \cdot S(x)_{a_1, ..., a_r}^{m_1, ..., m_r}((\sigma_a\circ\tau_a)(2k+\rho+2), ..., (\sigma_a\circ\tau_a)(r))\nonumber\\
        & \qquad \qquad \qquad \cdot S_{b_1, ..., b_s}^{n_1, ..., n_s}((\sigma_b\circ\tau_b)(2l+\rho+2),..., (\sigma_b\circ\tau_b)(s)).\nonumber
    \end{align}
    By a very similar procedure as we did for (\ref{exp-Delta-A-B-1}), we may add $1$ into the second sequence of the 3-shuffle 
    $$2\leq \sigma_a(1)< ...< \sigma_a(2k)\leq r, 2\leq \tilde{p}_{2k+1}, ..., \tilde{p}_{2k+\rho}\leq r, 2\leq \tilde{p}_{2k+\rho+1}, ..., \tilde{p}_{r}\leq r. $$
    resulting in a new 3-shuffle with \colorbox{ffff00}{$\sigma_a(1) > 1$} and \colorbox{ffff00}{$\tau_a(2k+1) = 2k+1$}. Details shall not be repeated. We also shift $\rho\mapsto \rho-1$. At the end of the day, $(\ref{exp-Delta-A-B-2-2f}) + (\ref{exp-Delta-A-B-3f})$ is equal to 
    \begin{align}
        & \sum_{k= 1}^\infty \sum_{\substack{\sigma_a\in J_{2k}(1, ..., r)\\
        {\scriptsize \colorbox{ffff00}{$\sigma_a(1) > 1$}}}}  (-1)^{\sigma_a}(-1)^{k}T_{a_1, ..., a_r}^{m_1,...,m_r}(\sigma_a(1), ..., \sigma_a(2k))\iota_{yx}(y+x)^{-m_{\sigma_a(1)}-\cdots - m_{\sigma_a(2k)} - k} \nonumber \\
        &  \cdot \sum_{l= 0}^\infty \sum_{\sigma_b\in J_{2l}(1, .., s)}(-1)^{\sigma_b}(-1)^{l}T_{b_1, ..., b_s}^{n_1, ..., n_s}(\sigma_b(1), ..., \sigma_b(2l)) y^{-n_{\sigma_b(1)}-\cdots - n_{\sigma_b(2l)} - l}\nonumber \\
        &  \cdot \sum_{\rho=0}^{\min\{r-2k, s-2l\}}\sum_{\substack{\tau_a\in J_\rho(2k+1,..., r)\\
        {\scriptsize \colorbox{ffff00}{$\tau_a(2k+1) = 2k+1$}}}}
        \sum_{\substack{\tau_b\in J_{\rho}(2l+1, ..., s)}}(-1)^{\tau_a}(-1)^{\tau_b}  (-1)^{r\rho + \frac{\rho(\rho+1)}{2}}\nonumber  \\
        &  \qquad \qquad \qquad \cdot \det\left((a_{(\sigma_a\circ\tau_a)(2k+i)}, b_{(\sigma_b\circ\tau_b)(2l+j)}))\iota_{yx}g_{m_{(\sigma_a\circ\tau_a)(2k+i)}n_{(\sigma_b\circ\tau_b)(2l+j)}}(y+x, y)\right)_{i, j = 1}^\rho\nonumber \\
        &   \qquad \qquad \cdot \begin{vmatrix}
            \langle (\sigma_a\circ \tau_a)(2k+1), (\sigma_b\circ\tau_b)(2l+1)\rangle_g & \cdots & \langle (\sigma_a\circ \tau_a)(2k+1), (\sigma_b\circ\tau_b)(2l+\rho)\rangle_g\\
            \vdots & & \vdots \\
            \langle (\sigma_a\circ \tau_a)(2k+\rho), (\sigma_b\circ\tau_b)(2l+1)\rangle_g & \cdots & \langle (\sigma_a\circ \tau_a)(2k+\rho), (\sigma_b\circ\tau_b)(2l+\rho)\rangle_g
        \end{vmatrix}\nonumber\\        
        & \qquad \qquad \qquad \cdot b_{(\sigma_b\circ\tau_b)(2l+\rho+1)}(-n_{(\sigma_b\circ\tau_b)(2l+\rho+1)}-1/2)\cdots  b_{(\sigma_b\circ\tau_b)(s)}(-n_{(\sigma_b\circ\tau_b)(s)}-1/2)\one \label{exp-Delta-A-B-F3}
    \end{align}
    where the summand with $\rho=0$ is vacuous. 

    To conclude the proof, we see that (\ref{exp-Delta-A-B-F1}), (\ref{exp-Delta-A-B-F2}), (\ref{exp-Delta-A-B-F3}) are simply parts in (\ref{exp-Delta-A-B-shuffle}). Clearly, the latter consists of the sum of the first three. Thus we managed to prove (\ref{exp-Delta-A-B-shuffle}). The conclusion of the theorem follows.
\end{proof}

\begin{cor}
    For $r, s\in \N, a_1, ..., a_r, b_1, ..., b_s\in \h, m_1, ...,m_r, n_1, ..., n_s\in \N$, 
        \begin{align*}
        & Y_W(Y_V(a_1(-m_1-1/2)\cdots a_r(-m_r-1/2)\one, x)b_1(-n_1-1/2)\cdots b_s(-n_s-1/2)\one, y) \\
        = \ & \sum_{k= 0}^\infty \sum_{\substack{1\leq p_1 < \cdots < p_{2k} \leq r}}  (-1)^{p_1+\cdots + p_{2k}}T_{a_1, ..., a_r}^{m_1,...,m_r}(p_1, ..., p_{2k})\iota_{yx}(y+x)^{-m_{p_1}-\cdots - m_{p_{2k}} - k} \nonumber \\
        & \quad \cdot \sum_{l= 0}^\infty \sum_{\substack{
        1\leq q_1 < \cdots < q_{2l} \leq s}}(-1)^{q_1+\cdots + q_{2l}}T_{b_1, ..., b_s}^{n_1, ..., n_s}(q_1, ..., q_{2l}) y^{-n_{q_1}-\cdots - n_{q_{2l}} - l}\nonumber \\
        & \qquad \cdot \sum_{\rho=0}^{\min\{r-2k, s-2l\}}\sum_{\substack{1\leq i_1 < \cdots < i_\rho\leq r-2k \\
        1\leq j_1 < \cdots < j_\rho \leq s-2l}}(-1)^{i_1+\cdots + i_{\rho} + j_1 + \cdots + j_\rho} (-1)^{(r-2k)\rho + \frac{\rho(\rho+1)}{2}}\nonumber \\
        & \qquad\qquad \qquad \cdot \begin{vmatrix}
            (a_{p_{i_1}^c}, b_{q_{j_1}^c})\iota_{yx}g_{m_{p_{i_1}^c}n_{q_{j_1}^c}}(x+y, y) & \cdots & (a_{p_{i_1}^c}, b_{q_{j_\rho}^c})\iota_{yx}g_{m_{p_{i_1}^c}n_{q_{j_\rho}^c}}(x+y, y)\\
            \vdots & & \vdots \\
            (a_{p_{i_\rho}^c}, b_{q_{j_1}^c})\iota_{yx}g_{m_{p_{i_\rho}^c}n_{q_{j_1}^c}}(x+y, y) & \cdots & (a_{p_{i_\rho}^c}, b_{q_{j_\rho}^c})\iota_{yx}g_{m_{p_{i_\rho}^c}n_{q_{j_\rho}^c}}(x+y, y)
        \end{vmatrix}\nonumber \\
        & \qquad\qquad \qquad \cdot \nord a_{p_1^c}^{(m_{p_1^c})}(y+x)\cdots \reallywidehat{a_{p_{i_1}^c}^{(m_{p_{i_1}^c})}(y+x)} \cdots \reallywidehat{a_{p_{i_\rho}^c}^{(m_{p_{i_\rho}^c})}(y+x)} \cdots a_{p_{r-2k}^c}(y+x) \nonumber \\
        & \qquad \qquad \qquad \quad \cdot b_{q_1^c}(-n_{q_1^c}-1/2)\cdots \reallywidehat{b_{q_{j_1}^c}(-n_{q_{j_1}^c}-1/2)}\cdots\reallywidehat{b_{q_{j_\rho}^c}(-n_{q_{j_\rho}^c}-1/2)}\cdots b_{q_{s-2l}^c}(-n_{q_{s-2l}^c}-1/2)\one 
    \end{align*}

\end{cor}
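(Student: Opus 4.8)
The plan is to unwind the definition of the actual vertex operator and feed the preceding theorem into it. Writing $A = a_1(-m_1-1/2)\cdots a_r(-m_r-1/2)\one$ and $B = b_1(-n_1-1/2)\cdots b_s(-n_s-1/2)\one$, the definition $Y_W(u, y) = \bar Y_W(\exp(\Delta(y)) u, y)$ gives
$$Y_W(Y_V(A, x)B, y) = \bar Y_W\big(\exp(\Delta(y))\, Y_V(A, x)B,\ y\big).$$
The first step is to recognize, via Corollary 4.6 of \cite{FQ} as recalled above, that $Y_V(a_1(-m_1-1/2)\cdots a_r(-m_r-1/2)\one, x) = \ :a_1^{(m_1)}(x)_V\cdots a_r^{(m_r)}(x)_V:$, so that $\exp(\Delta(y))\, Y_V(A, x)B$ is precisely the quantity computed in formula (\ref{exp-Delta-A-B}) of the preceding theorem. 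Thus the whole left-hand side equals $\bar Y_W(-, y)$ applied to the explicit series (\ref{exp-Delta-A-B}).

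The second step is to apply $\bar Y_W(-, y)$ termwise to (\ref{exp-Delta-A-B}). All the coefficients appearing there — the total contraction numbers $T_{a_1,\dots,a_r}^{m_1,\dots,m_r}(\cdots)$ and $T_{b_1,\dots,b_s}^{n_1,\dots,n_s}(\cdots)$, the factors $\iota_{yx}(y+x)^{-\cdots}$ and $y^{-\cdots}$, and the determinant entries $(a_{p^c}, b_{q^c})\iota_{yx}g_{mn}(x+y, y)$ — are functions of $x$ and $y$ alone, hence scalars as far as $\bar Y_W$ is concerned; they pass through unchanged. The only genuinely operator-valued factor in each summand is the product of negative generating functions on $V$ times a creation monomial,
$$a_{p_1^c}^{(m_{p_1^c})}(x)_V^-\cdots a_{p_{r-2k}^c}^{(m_{p_{r-2k}^c})}(x)_V^-\cdot b_{q_1^c}(-n_{q_1^c}-1/2)\cdots b_{q_{s-2l}^c}(-n_{q_{s-2l}^c}-1/2)\one,$$
with the hats on the contracted indices removed.

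The third step is to convert this operator factor under $\bar Y_W(-, y)$, which is exactly the computation already carried out in the proof of Corollary \ref{Iterate-thm}: expanding each $a^{(m)}(x)_V^-$ as $\sum_{i\geq 0} a(-i-1/2)(x^i)^{(m)}$ and feeding the resulting creation monomials into $\bar Y_W(-, y)$ produces the normal-ordered products $\nord a^{(i)}(y)\cdots b^{(n)}(y)\cdots\nord$ on $W$, and the formal Taylor theorem resums these into $\nord a^{(m)}(y+x)\cdots b^{(n)}(y)\cdots\nord$. Applying this to the surviving $r-2k-\rho$ negative generating functions and $s-2l-\rho$ creation operators in each summand yields precisely the normal-ordered product over $W$ displayed in the corollary, and restoring the scalar coefficients in front reproduces the claimed formula.

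The difficulty here is bookkeeping rather than analysis: one must check that the complementary index sequences $p^c, q^c$, the contraction indices $i, j$, and all the signs line up between (\ref{exp-Delta-A-B}) and the target, and that the termwise application of $\bar Y_W$ is legitimate as an identity of series in $V[[x, x^{-1}, y, y^{-1}]]$ mapped into the corresponding $W$-valued series. Both are immediate once one observes that $\bar Y_W$ touches only the operator factor, that the Taylor resummation merely replaces the argument $y$ by $y+x$ in the surviving $a$-generating functions, and that no new contractions are produced at this stage — so the combinatorial structure of (\ref{exp-Delta-A-B}) is carried over verbatim, with the $V$-side operator factor replaced by its $W$-side normal-ordered image.
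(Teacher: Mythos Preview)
Your proposal is correct and follows essentially the same route as the paper: apply $\bar Y_W(-,y)$ to formula (\ref{exp-Delta-A-B}), pass the scalar factors through, and convert the surviving operator factor $a_{p_1^c}^{(m_{p_1^c})}(x)_V^-\cdots b_{q_1^c}(-n_{q_1^c}-1/2)\cdots\one$ into the normal-ordered product on $W$ via the formal Taylor theorem, exactly as in the proof of Corollary \ref{Iterate-thm}. The paper simply records this last conversion as the equality $\bar{Y}_W(a_1^{(m_1)}(x)_V^- \cdots b_s(-n_s-1/2)\one, y) = \nord a_1^{(m_1)}(y+x) \cdots b_s^{(n_s)}(y)\nord$ and cites the analogous computation in Part I rather than repeating it.
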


\begin{proof}
    Apply the na\"ive vertex operator $\bar{Y}_W$ to Formula (\ref{exp-Delta-A-B}). The conclusion directly follows from the equality
    \begin{align*}
        & \bar{Y}_W(a_1^{(m_1)}(x)_V^- \cdots a_r^{(m_r)}(x)_V^- b_1(-n_1-1/2)\cdots b_s(-n_s-1/2)\one, y)\\
        = \ & \nord a_1^{(m_1)}(y+x) \cdots a_r^{(m_r)}(y+x)b_1^{(n_1)}(y)\cdots b_s^{(n_s)}(y)\nord.
    \end{align*}
    The proof of the equality is verbatim as that for $V$ in Corollary 4.7 in \cite{FQ}. We should not repeat the details. 
\end{proof}

\subsection{Verifying the axioms}

\begin{thm}
    The vector space $W$ considered in Section 2, together with the vertex operator $Y_W$ constructed in Section 4, forms a canonicaly $\Z_2$-twisted $V$-module. 
\end{thm}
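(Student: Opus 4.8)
The plan is to verify the axioms of Definition \ref{Def} for $\theta$ the parity involution, treating the grading and identity axioms as essentially built into the construction and reserving the real work for weak associativity, where the product and iterate formulas already computed will be compared. First I would dispose of the grading axioms. The lower bound and $\mathbf{d}_W$-grading conditions hold by construction, since $W$ carries the $\N$-grading assigning weight $m_1+\cdots+m_r$ to $h_1(-m_1)\cdots h_r(-m_r)u$, so $W_{(m)}=0$ for $\mathrm{Re}(m)<0$ and $\mathbf{d}_W$ acts by the weight by definition. For the $\mathbf{d}_W$-commutator formula it suffices to note that each mode $h(n)$ is homogeneous of weight $-n$ and that $\Delta(x)$ satisfies $[\mathbf{d}_V,\Delta(x)]=x\tfrac{d}{dx}\Delta(x)$ (it is a sum of weight-lowering products of positive modes weighted by the matching power of $x$); degree counting in $Y_W(u,x)=\bar Y_W(\exp(\Delta(x))u,x)$ then yields the commutator exactly as in an ordinary module. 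The identity property is immediate: $\Delta(x)$ is a sum of products of strictly positive modes of $V$, which annihilate $\one$, so $\exp(\Delta(x))\one=\one$ and $Y_W(\one,x)=\bar Y_W(\one,x)=1_W$.

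Next I would check the $D$-derivative property $\tfrac{d}{dx}Y_W(u,x)=Y_W(D_Vu,x)$. On generators this is a differentiation identity: because $\tfrac{d}{dx}h^{(m)}(x)=(m+1)h^{(m+1)}(x)$, applying $\tfrac{d}{dx}$ to $\bar Y_W(a_1(-m_1-1/2)\cdots\one,x)$ reproduces term by term the naive vertex operator of $D_V(a_1(-m_1-1/2)\cdots\one)$, which establishes the property for $\bar Y_W$; to lift it to $Y_W$ one combines the chain rule for $\tfrac{d}{dx}\exp(\Delta(x))$ with the commutation of $D_V$ and $\Delta(x)$ recorded in Part I. This is parallel to the corresponding verification there, and I would not reproduce it.

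The substance of the theorem is weak associativity, and I would verify it by comparing the explicit product and iterate formulas for the actual vertex operator. The iterate side is already in hand: the Corollary just proved writes $Y_W(Y_V(A,x)B,y)$ as a finite sum in which the self-contractions of $A$ and of $B$ are carried by the total contraction numbers $T$ and the cross-contractions are determinants of $\iota_{yx}g_{m_in_j}(x+y,y)$. I would establish the matching product formula for $Y_W$ by writing $Y_W(A,z_1)Y_W(B,z_2)=\bar Y_W(\exp(\Delta(z_1))A,z_1)\,\bar Y_W(\exp(\Delta(z_2))B,z_2)$, expanding each $\exp(\Delta)$ by the restated Corollary~5.7 of \cite{FQ}, and applying the naive product formula of Corollary \ref{Product-thm}; here the $\exp(\Delta)$-corrections supply precisely the self-contraction factors $T$ while the naive cross-contractions supply determinants of $\iota_{z_1z_2}g_{m_in_j}(z_1,z_2)$. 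Multiplying both the product and the iterate by the pole-clearing factor $(z_1-z_2)^{P_{12}}z_2^{P_2}$ lands each in a formal power series; the weak associativity axiom is then the equality of these two series, which by the associativity reformulation established earlier amounts to saying that both are the expansions, in the regions $|z_1|>|z_2|$ and $|z_2|>|z_1-z_2|$ respectively, of the single algebraic function whose contraction entries are $g_{m_in_j}(z_1,z_2)$.

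I expect the main obstacle to be exactly this matching. The product ordering and the iterate ordering distribute the contractions differently between self- and cross-terms, and they expand the same generating functions in opposite regions ($\iota_{z_1z_2}$ versus $\iota_{yx}$); what must be shown is that both reassemble into the two expansions of one and the same algebraic function. This rests entirely on the choice of the constants $C_{mn}$: the single-mode identity (\ref{C_mn-g-formula}) forces the self-contraction to absorb precisely the difference between the two expansions of $g_{mn}$, and its multi-mode consequence (\ref{C_rt-formula}) is what makes the determinants of self- and cross-contractions collapse correctly. Carrying this out in full generality requires the same bookkeeping with $2$- and $3$-shuffles used in the iterate computation (Lemmas \ref{Comb-Id-Lemma} and \ref{2-shuffle-lemma}); once the product formula is written in shuffle form, I expect the equality of the two series to follow from (\ref{C_rt-formula}) applied termwise.
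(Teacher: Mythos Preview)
Your overall strategy is correct and matches the paper's: verify each axiom in turn, with the real content in weak associativity. Two points of comparison are worth noting.

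For the $D$-derivative property, the paper takes a shorter route than yours. Rather than checking it for $\bar Y_W$ and then lifting via the chain rule and a commutation relation for $D_V$ with $\Delta(x)$, the paper simply specializes the already-proved iterate formula to $v=\one$: since $Y_V(u,x)\one=e^{xD_V}u$, taking the coefficient of $x^1$ (i.e.\ $\Res_x x^{-2}$) on both sides of the iterate formula immediately yields $Y_W(D_Vu,y)=\tfrac{\partial}{\partial y}Y_W(u,y)$. This avoids any separate analysis of how $D_V$ interacts with $\exp(\Delta(x))$.

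For weak associativity, you overestimate the remaining work. The shuffle combinatorics of Lemmas~\ref{Comb-Id-Lemma} and~\ref{2-shuffle-lemma} and the identity~(\ref{C_rt-formula}) have already been fully consumed in proving the iterate formula (the theorem giving (\ref{exp-Delta-A-B}) and its corollary). Once that formula is in hand, the product formula for $Y_W$ follows from Corollary~\ref{Product-thm} together with the $\exp(\Delta)$-expansion by linearity alone, with no further shuffle manipulation. The two resulting expressions are then visibly identical except that the product carries $\iota_{xy}(x+y)^{-\cdots}$ and $\iota_{xy}g_{mn}(x+y,y)$ while the iterate carries $\iota_{yx}$ of the same functions; their difference is an expansion of zero, cleared by a single power $(x+y)^P$ with $P$ large enough to absorb the poles. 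So the final step is a one-line observation, not a second combinatorial argument.
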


\begin{proof}
    We check the axioms. 
    \begin{enumerate}
        \item Axiom of grading: Clearly, $W$ is $\N$-graded with each homogeneous space $W_{(n)}$ spanned by 
        $$h_1(-m_1)\cdots h_r(-m_r)u, m_1, ..., m_r\in \N, m_1+\cdots + m_r = n. $$
        The grading is lower bounded by zero. The $\d$-grading condition follows directly from its definition. For the $\d$-commutator formula, first notice that from a verbatim argument in the proof of Theorem 3.12 in \cite{FQ}, the na\"ive vertex operator satisfies
        \begin{align*}
            [\d_W, \bar Y_W(v, x)] = \bar Y_W(\d_V v, x)+ x\frac{d}{dx}\bar{Y}_W(v, x). 
        \end{align*}
        Equivalently, for every basis element $v = a_1(-m_1-1/2)\cdots a_r(-m_r-1/2)\one$ of $V$, from the coefficient of $x^{-n-1}$, we have 
        $$\wt (\bar{Y}_W)_n(v) = \wt v - n - 1 = m_1 + \cdots + m_r + \frac r 2 - n -1. $$
        To study the coefficient of $x^{-n-1}$ of the series $[\d_W, \bar{Y}_W(\exp(\Delta(x))v, x)]$, we first note that for every fixed $1\leq i_1 < \cdots < i_{2t}  \leq r$, 
        \begin{align*}
            & \wt (\bar{Y}_W)_n(S_{a_1, ..., a_r}^{(m_1, ..., m_r)}(1, ..., \widehat{i_1}, ..., \widehat{i_t}, ..., r)) \\
            = \ & m_1 + \cdots + \widehat{m_{i_1}} + \cdots + \widehat{m_{i_{2t}}} + \cdots + m_r + \frac {r-2t} 2 - n - 1. 
        \end{align*}
        Thus, the coefficient of $x^{-n-1}$ in 
        $$x^{-m_{i_1} - \cdots - m_{i_{2t}} - t}[\d_W, \bar{Y}_W(S_{a_1, ..., a_r}^{(m_1, ..., m_r)}(1, ..., \widehat{i_1}, ..., \widehat{i_t}, ..., r), x)]$$ 
        is precisely the coefficient of $x^{-n+m_{i_1} + \cdots + m_{i_{2t}} + t-1}$ of $[\d_W, \bar{Y}_W(S_{a_1, ..., a_r}^{(m_1, ..., m_r)}(1, ..., \widehat{i_1}, ..., \widehat{i_t}, ..., r), x)]$, which is 
        $$\left(m_1 + \cdots + m_r + \frac r 2 - n - 1\right) (\bar Y_W)_{n-m_{i_1 - \cdots - m_{i_{2t}}-t}}(S_{a_1, ..., a_r}^{(m_1, ..., m_r)}(1, ..., \widehat{i_1}, ..., \widehat{i_t}, ..., r))$$
        Supplementing the coefficient $T_{a_1, ..., a_r}^{m_1, ..., m_r}(i_1, ..., i_{2t})$, and taking the sum over all $i_1, ..., i_{2t}$ and all $t\in \N$, we recover the coefficient of $x^{-n-1}$ in $[\d_W, Y_W(v, x)]$, which is 
        $$\left(m_1 + \cdots + m_r + \frac r 2 - n - 1\right) (Y_W)_n(v).$$
        Thus, 
        $$\wt (Y_W)_n(v) = \wt v - n -1. $$
        So the $\d$-commutator formula holds for $Y_W$.         
        \item The identity axiom:  It follows directly the definition of the na\"ive operator and the fact that $\Delta(x) \one = 0$. 
        \item $D$-derivative formula: We use the iterate formula $Y_W(Y_V(u, x)v, y)$ with $v = \one$. So for $u=a_1(-m_1-1/2)\cdots a_r(-m_r-1/2)\one$, with the creation property of $Y_V$
        \begin{align*}
            & Y_W(Y_V(a_1(-m_1-1/2)\cdots a_r(-m_r-1/2)\one, x)\one, y) \\
            = \ & Y_W(e^{xD}a_1(-m_1-1/2)\cdots a_r(-m_r-1/2)\one, y) \\
            = \ & \sum_{k=0}^\infty \sum_{1\leq p_1 < \cdots < p_{2k} \leq r} T_{a_1, ..., a_r}^{m_1, ..., m_r}(p_1, ..., p_{2k})\iota_{yx}(y+x)^{-m_{p_1}-\cdots -m_{p_{2k}}-k}\\
            & \qquad \qquad \qquad \qquad \cdot \nord a_{p_1^c}^{(m_{p_1^c})}(y+x) \cdots a_{p_{r-2k}^c}^{(m_{p_{r-2k}^c}})(y+x)\nord 
        \end{align*}
        Taking the coefficient of $x$, i.e., taking $\Res_x x^{-2}$, we obtain 
        \begin{align*}
            & Y_W(D a_1(-m_1-1/2)\cdots a_r(-m_r-1/2)\one, y) \\
            = \ & \sum_{k=0}^\infty \sum_{1\leq p_1 < \cdots < p_{2k} \leq r} T_{a_1, ..., a_r}^{m_1, ..., m_r}(p_1, ..., p_{2k}) \frac{\partial}{\partial y}y^{-m_{p_1}-\cdots -m_{p_{2k}}-k}\\
            & \qquad \qquad \qquad \qquad \cdot \nord a_{p_1^c}^{(m_{p_1^c})}(y+x) \cdots a_{p_{r-2k}^c}^{(m_{p_{r-2k}^c}})(y+x)\nord \\
            & + \sum_{k=0}^\infty \sum_{1\leq p_1 < \cdots < p_{2k} \leq r} T_{a_1, ..., a_r}^{m_1, ..., m_r}(p_1, ..., p_{2k}) y^{-m_{p_1}-\cdots -m_{p_{2k}}-k}\\
            & \qquad \qquad \qquad \qquad \cdot \sum_{i=1}^{r-2k} \nord a_{p_1^c}^{(m_{p_1^c})}(y) \cdots \frac{\partial}{\partial y}a_{p_i^c}^{(m_i^c)}(y)\cdots a_{p_{r-2k}^c}^{(m_{p_{r-2k}^c}})(y)\nord \\
            = \ & \frac{\partial}{\partial y} Y_W(a_1(-m_1-1/2)\cdots a_r(-m_r-1/2)\one, y)
        \end{align*}
        \item Weak associativity: From Corollary \ref{Product-thm}, we see that for $r, s\in \N, a_1, ..., a_r, b_1, ..., b_s\in \h, m_1, ..., m_r, n_1, ..., n_s\in \N$, 
        \begin{align*}
            & Y_W(a_1(-m_1-1/2)\cdots a_r(-m_r-1/2)\one, x+y) Y_W(b_1(-n_1-1/2)\cdots b_s(-n_s-1/2)\one, y)\\
            = \ & \sum_{k=0}^\infty \sum_{1\leq p_1 < \cdots < p_{2k}\leq r}T_{a_1, ..., a_r}^{m_1, ..., m_r}(p_1, ..., p_{2k})\iota_{xy}(x+y)^{-m_{p_1}-\cdots -m_{p_2k}-k}\\
            & \cdot \sum_{l=0}^\infty \sum_{1\leq q_1 < \cdots < q_{2l}\leq s}T_{b_1, ..., b_s}^{n_1, ..., n_s}(q_1, ..., q_{2l}) y^{-n_{q_1}-\cdots -n_{q_{2l}}-l}\\
            & \cdot \bar{Y}_W(a_{p_1^c}(-m_{p_1^c}-1/2)\cdots a_{p_{r-2k}^c}(-m_{p_{r-2k}^c}-1/2)\one) \\
            & \cdot \bar{Y}_W(b_{q_1^c}(-n_{q_1^c}-1/2)\cdots b_{q_{s-2l}^c}(-n_{q_{s-2l}^c}-1/2)\one) \\
            = \ & \sum_{k=0}^\infty \sum_{1\leq p_1 < \cdots < p_{2k}\leq r}T_{a_1, ..., a_r}^{m_1, ..., m_r}(p_1, ..., p_{2k})\iota_{xy}(x+y)^{-m_{p_1}-\cdots -m_{p_2k}-k}\\
            & \cdot \sum_{l=0}^\infty \sum_{1\leq q_1 < \cdots < q_{2l}\leq s}T_{b_1, ..., b_s}^{n_1, ..., n_s}(q_1, ..., q_{2l}) y^{-n_{q_1}-\cdots -n_{q_{2l}}-l}\\
             & \cdot \sum_{\rho=0}^{\min(r,s)}\sum_{\substack{ 1 \leq i_1 < \cdots < i_\rho\leq r\\ 1 \leq j_1 < \cdots < j_\rho \leq s}}(-1)^{i_1+\cdots + i_\rho + j_1+\cdots + j_\rho }(-1)^{r\rho + \rho(\rho+1)/2}\nonumber \\
            & \hspace{9.7 em}\cdot \begin{vmatrix}
            (a_{p_{i_1}^c}, b_{q_{j_1}^c})\iota_{xy}g_{m_{p_{i_1}^c}n_{q_{j_1}^c}}(x+y, y) & \cdots & (a_{p_{i_1}^c}, b_{q_{j_\rho}^c})\iota_{xy}g_{m_{p_{i_1}^c}n_{q_{j_\rho}^c}}(x+y, y)\\
            \vdots & & \vdots\nonumber  \\
            (a_{p_{i_\rho}^c}, b_{q_{j_1}^c})\iota_{xy}g_{m_{p_{i_\rho}^c}n_{q_{j_1}^c}}(x+y, y) & \cdots & (a_{p_{i_\rho}^c}, b_{q_{j_\rho}^c})\iota_{xy}g_{m_{p_{i_\rho}^c}n_{q_{j_\rho}^c}}(x+y, y)
            \end{vmatrix}\nonumber \\
        & \hspace{10 em}\cdot \nord a_{p_1^c}^{(m_{p_1^c})}(x+y) \cdots \reallywidehat{a_{p_1^c}^{(m_{p_1^c})}(x+y)} \cdots \reallywidehat{a_{p_{i_\rho}^c}^{(m_{p_{i_\rho}^c})}(x+y)} \cdots a_{p_{r-2k}^c}^{(m_{p_{r-2k}^c})}(x+y)\nonumber  \\
        & \hspace{11 em}\cdot b_{q_1^c}^{(n_{q_1^c})}(y) \cdots \reallywidehat{b_{q_{j_1}^c}^{(n_{q_{j_1}^c})}(y)} \cdots \reallywidehat{b_{q_{j_\rho}^c}^{(n_{q_{j_\rho}^c})}(y)} \cdots b_{q_{s-2l}^c}^{(n_{q_{s-2l}^c})}(y)\nord 
        \end{align*}
        which differs from the iterate formula in Corollary \ref{Iterate-thm} by an expansion of zero. Moreover, if we act the product and the iterate on an element $h_1(-i_1)\cdots h_j(-i_j)u$ in the spanning set of $W$, then the expansion of zero can be removed by the multiplication of $(x+y)^P$, where $P$ may be chosen as any integer that is greater than 
        $$m_1 + \cdots + m_r + \frac r 2 + i_1 + \cdots + i_j. $$
        Thus the weak associativity holds. 
    \end{enumerate}
\end{proof}

\begin{rema}\label{D-comm-fail}
    We should show that a $D_W$ satisfying the $D$-commutator formula does not exist on $W$. Suppose there exists such an operator, then necessarily, for $a\in \h$, 
    \begin{align*}
        [D_W, a(x)] = \frac{d}{dx} a(x) \Rightarrow [D_W, a(n)] = (-n+1/2) a(n-1). 
    \end{align*}
    Now consider 
    \begin{align*}
        Y_W(a_1(-1/2)a_2(-1/2)\one, x) = \bar{Y}_W(a_1(-1/2)a_2(-1/2)\one, x) + (-1)^{1+2}(a_1, a_2)C_{00}x^{-1} \bar{Y}_W(\one, x)
    \end{align*}
    The commutator of $D_W$ with the second term is zero. Thus
    \begin{align}
        [D_W,  Y_W(a_1(-1/2)a_2(-1/2)\one, x)] & = [D_W, \nord a_1(x)a_2(x)\nord] \label{D-comm-fail-1}
    \end{align}
    If $D$-commutator formula holds, then it must be equal to 
    $$\frac d {dx} \nord a_1(x)a_2(x)\nord = \sum_{n_1, n_2\in \Z} \nord a_1(n_1)a_2(n_2)\nord (-n_1-n_2-2)x^{-n_1-n_2-2}$$
    In particular, only $\nord a_1(0)a_2(0)\nord$ may appear in the series. Clearly this is not the case in (\ref{D-comm-fail-1}), since the coefficient of $x^{-2}$ contains terms like 
    \begin{align*}
        & [D_W, \nord a_1(1)a_2(0) + a_1(0)a_2(1)\nord] \\
        = \ & [D_W, -a_2(0)a_1(1) + a_1(0)a_2(1)] \\
        = \ & \frac 1 2 a_2(-1)a_1(1) - \frac 1 2 a_2(0)a_1(0) + \frac 1 2 a_1(-1)a_2(1) - \frac 1 2 a_1(0)a_2(0)\\
        = \ & -\frac 1 2 \nord a_1(1)a_2(-1)\nord  + \frac 1 2 \nord a_1(-1)a_2(1)\nord - \frac 1 2 \nord a_2(0)a_1(0)\nord - \frac 1 2 \nord a_1(0)a_2(0)\nord 
    \end{align*}
    There is no way to get rid of $\nord a_2(0) a_1(0)\nord$ when $a_1(0)$ and $a_2(0)$ has no relations. 
\end{rema}

\noindent {\small \sc Department of Mathematics, University of Denver, Denver, CO  80210, USA}

\noindent {\em E-mail address}: fei.qi@du.edu | fei.qi.math.phys@gmail.com

\end{document}